\newcommand{\nextpageref}[1]{%
  \number\numexpr\getpagerefnumber{#1}+1\relax}
\author{Frédéric Mangolte}
\address{LUNAM Universit\'e, LAREMA, Universit\'e d'Angers} \email{frederic.mangolte@univ-angers.fr}
\urladdr{http://www.math.univ-angers.fr/~mangolte}
\title[Variétés réelles de dimension $3$]{Topologie des variétés algébriques réelles de dimension~$3$} \alttitle{Real Algebraic Varieties}
\date{\today}
\begin{document}

\frontmatter
%\begin{abstract}
%
%\end{abstract}
%\begin{altabstract}
% \end{altabstract}
%\subjclass{14P}
%\keywords{Variété algébrique réelle}
% \altkeywords{Real Algebraic Variety}
%%\translator{⟨Pr ́enom Nom⟩} \thanks{⟨Subventions⟩} 
%\dedicatory{}
\maketitle
\setcounter{tocdepth}{2}     % Dans la table des matieres

\tableofcontents 

%%%%%%%%%%%%%%%%%%%%%%%%%%%%%%%%%%%%%%%%%%%%%
\mainmatter
%Corps de l’ouvrage

\section{Introduction : Modèles algébriques des variétés lisses}

%\nocite{Sc74}%[VI~2.2]
%\nocite{Sc94}
%\nocite{Ko01real} 
%\marginpar{$\approx$ pour difféo et $\cong$ pour isom alg, $\sim$ pour équivalence}

On sait grâce à Nash
\cite{Nash52} et Tognoli \cite{To73}, que toute variété~$\cC^\infty$ compacte connexe et sans bord admet un modèle algébrique réel. Plus précisément, soit $M$ une telle variété, il existe des polynômes réels 
$
P_1(x_1,\dots ,x_m), \dots,P_r(x_1,\dots ,x_m)
$ 
tels que le lieu de leurs zéros communs 
$$
X(\RR):= \{x\in\RR^m \textrm{ tels que }
P_1(x)=\dots = P_r(x)=0\}
$$
est lisse\footnote{Grâce au théorème de résolution des singularités d'Hironaka, on peut même supposer que le lieu des zéros complexes (réels et non réels) est lisse.} et difféomorphe à $M$. À la suite de son théorème, Nash conclut son article \cite{Nash52} par une conjecture selon laquelle la variété définie par les polynômes $
P_1, \dots,P_r
$ peut être choisie rationnelle (Définition~\ref{dfn.ratio}).

Dorénavant il est prouvé que cette conjecture est fausse en toute dimension plus grande que un.
En dimension~$2$, le Théorème de Comessatti~\cite{Co14}  affirme qu'aucun modèle algébrique réel d'une surface hyperbolique \emph{orientable} ne peut être une surface projective rationnelle non singulière (Théorème~\ref{thm.co}). En dimension~3, 
Koll\'ar\footnote{J\'anos Koll\'ar a reçu de l'AMS le prix Cole d'algèbre en 2006 pour ses travaux sur la conjecture de Nash et sur les variétés rationnellement connexes.} a montré en 1998 que seul un nombre fini de variétés hyperboliques pouvaient être rationnelles (Théorème~\ref{thm.kollar}). 
Ce résultat a été amélioré par  le Théorème de Eliashberg-Viterbo (2000) (Théorème~\ref{cor.vit}) qui impose qu'une variété hyperbolique de dimension $3$ ou supérieure ne peut être composante connexe d'une variété projective rationnelle non singulière. 
D'autres versions de la conjecture initiale ont été explorées en affaiblissant les hypothèses sur la variété algébrique. %, nous reviendrons sur ces versions dans la section~\ref{sec.conjnash}.
La question initiale nous amène donc naturellement à la recherche des types topologiques possibles du lieu réel (cf. page~\pageref{p.lieu}) d'une variété projective rationnelle non singulière. Plus généralement, on cherche à classifier les types topologiques des variétés rationnellement connexes (\ref{dfn.rc}) et uniréglées (\ref{dfn.uni}), ces notions s'ordonnant de la manière suivante: 
$$
\textrm{rationnel} \Rightarrow \textrm{rationnellement connexe} \Rightarrow \textrm{uniréglé}.
$$

Dans la section~\ref{sec.conjnash}, nous essayerons d'expliquer et de motiver les énoncés précédents et de donner une idée de la preuve de certains d'entre eux. Après avoir ainsi discuté des différentes variantes de la conjecture de Nash, nous rappelerons en section~\ref{sec.surf} la classification bien connue des surfaces rationnelles réelles. Puis nous présenterons en section~\ref{sec.uni}, l'état actuel de la classification des variétés projectives réelles de dimension~$3$ uniréglées et rationnellement connexes (Théorème~\ref{thm.tout}), qui est l'objectif principal de cet article. La section~5 est dédiée à la preuve d'un équivalent du Théorème de Comessatti pour les surfaces singulières de Du Val (Défintion~\ref{dfn.duval}) qui a servi à la preuve des résultats sur les variétés rationnellement connexes du Théorème~\ref{thm.tout}. Quelques questions et conjectures ont été regroupées en section~\ref{sec.conj} et nous avons ajouté deux annexes pour le confort du lecteur, l'une concernant les éclatements, l'autre les conjectures de Thurston.
Le théorème de Viterbo ayant fait l'objet d'un séminaire Bourbaki en 2000 \cite{KhBourbaki}. Nous allons nous concentrer sur les résultats obtenus depuis lors, dont le plus récent a été publié en 2012 \cite{mw1}. 

Pour conclure cette introduction, remarquons que si $M$ est n'est pas lisse, il n'existe pas, en général,  d'ensemble algébrique réel homéomorphe à $M$. Akbulut et King ont néanmoins prouvé le résultat suivant qui généralise le théorème de Nash. Toute espace topologique qui porte une structure de \emph{variété $PL$}\footnote{Piecewise Linear, cf. e.g. \cite{RS82}.} compacte sans bord est homéomorphe à un ensemble algébrique réel \cite{AK81}.

\bigskip
\nocite{Si89,DIK}
Je me suis largement inspiré de \cite{Ko01real} pour rédiger la première partie de cet article.
Je tiens à remercier pour leurs relectures, corrections, améliorations et soutien (par ordre alphabétique):
Alexandre Bardet, Mohamed Benzerga, Jérémy Blanc, Sophie Blanc, Anne-Françoise Coïc, Luck Darnière, Sorin Dumitrescu, Michel Granger, Ilia Itenberg, Tan Lei, Jean-Jacques Loeb, Gustave Mangolte, Jeanne Mangolte, Luc Menichi, Jean-Philippe Monnier, Daniel Naie, Geoffrey Powell. Les erreurs résiduelles m'incombant comme il se doit.

%%%%%%%%%%%%%%%%%%%%%%%%%
\section{La conjecture de Nash de 1952 à 2000 en passant par 1914}\label{sec.conjnash}

\subsubsection*{Théorèmes de Nash et Tognoli}

Nous suivrons les conventions suivantes pour l'usage de l'expression \emph{variété algébrique réelle}. Une variété algébrique $X$ est une variété (quasi-)projective, c'est-à-dire (un ouvert d')une variété définie par des équations polynomiales (homogènes dans le cas projectif). Une telle variété algébrique est \emph{réelle} si elle admet une réalisation dont les équations sont à coefficients réels. Pour fixer les idées, supposons que $X$ soit définie  
par des polynômes homogènes à coefficients réels
$$
P_1(z_0,\dots ,z_m),\dots,P_r(z_0,\dots ,z_m)
$$
ces polynômes déterminent un sous-espace topologique de $\PP^m(\CC)$
\begin{multline*}
X:=X(\CC)=\{(z_0:\dots :z_m)\in\PP^m(\CC) \st\\ 
P_1(z_0,\dots ,z_m)=\dots=P_r(z_0,\dots ,z_m)=0\}
\end{multline*} 
et un sous-espace topologique de $\PP^m(\RR)$ 
\begin{multline*}
X(\RR):= \{(z_0:\dots :z_m)\in\PP^m(\RR) \st\\ 
P_1(z_0,\dots ,z_m)=\dots=P_r(z_0,\dots ,z_m)=0\}
\end{multline*} 
appelé le \emph{lieu réel} (ou parfois la \emph{partie réelle}) de $X$.\label{p.lieu}

Par exemple, le lieu réel du plan projectif complexe est le plan projectif réel $\PP^2(\RR)=\RR\PP^2$. Le lieu réel de la surface quadrique de $\CC^3_{x,y,z}\subset\PP^3(\CC)$ d'équation $x^2+y^2-z^2=0$ est un cône de révolution. Le lieu réel de $\PP^1(\CC)\times\PP^1(\CC)\approx \sS^2\times \sS^2$ est le tore $\PP^1(\RR)\times\PP^1(\RR)\approx \sS^1\times \sS^1$.

Une \emph{surface} algébrique réelle $X$ est donc une variété algébrique réelle telle que $\dim_\RR X(\CC)=4$, et plus généralement, une variété algébrique réelle $X$ de dimension $n$ vérifie $\dim_\RR X(\CC)=2n$.

A priori, le lieu réel peut être vide (par exemple $x^2+y^2+z^2=-1$ dans $\RR^3$), mais lorsque la variété $X$ possède au moins un point réel non singulier\footnote{Dans ce cas, en chaque point non singulier $x$, $X$ possède une dimension en tant que variété et on définit la dimension de $X$ comme le max des dimensions obtenues.}, on a $\dim_\RR X(\CC)=2\dim_\RR X(\RR)$.
En particulier, lorsque la variété algébrique est non singulière et $X(\RR)\ne\emptyset$, les sous-ensembles algébriques $X(\CC)\subset\PP^m(\CC)$ et $X(\RR)\subset\PP^m(\RR)$ sont munis chacun d'une structure de sous-variété compacte de classe~$\mathcal{C}^\infty$.  

Réciproquement étant donné une variété $\cC^\infty$, peut-on la considérer comme lieu des points d'une variété algébrique lisse?
Il est tout à fait clair qu'en général, une variété $\cC^\infty$ n'est difféomorphe à aucune variété algébrique \emph{complexe}. En effet la variété différentielle sous-jacente à une variété complexe lisse est orientable et de dimension paire. Par ailleurs bien d'autres obstructions plus fines sont connues cf. e.g. \cite{FM94} pour des résultats modernes. En revanche, Nash a prouvé qu'il n'y a pas d'obstruction en réel dans le cas compact. 

\begin{thm}[Nash 1952]\label{thm.nash}
Si $M$ est une variété $\cC^\infty$ compacte connexe sans bord, alors il existe une variété algébrique projective réelle $X$ dont une composante connexe $A\subset X(\RR)$ du lieu réel est difféomorphe à~$M$,
$$
M\approx A\hookrightarrow X(\RR).
$$
\end{thm}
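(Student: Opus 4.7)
Le plan est de construire $X$ comme clôture projective d'une sous-variété algébrique affine de $\RR^N$ obtenue par approximation polynomiale des équations définissant un plongement lisse de $M$.

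\emph{Première étape : plongement et voisinage tubulaire.} Par le théorème de plongement de Whitney, je commencerais par plonger $M$ comme sous-variété $\cC^\infty$ compacte de $\RR^N$ pour $N=2n+1$ (avec $n=\dim M$). Le théorème du voisinage tubulaire fournit alors un voisinage ouvert $U$ de $M$ dans $\RR^N$ et une rétraction $\pi:U\to M$ de classe $\cC^\infty$. Quitte à trivialiser localement le fibré normal et à recoller par une partition de l'unité, j'obtiendrais une application lisse $f=(f_1,\dots,f_{N-n}):U\to\RR^{N-n}$ telle que $M=f^{-1}(0)$ et que $df$ soit surjective en tout point de $M$.

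\emph{Deuxième étape : approximation polynomiale.} Sur une boule fermée $B\subset\RR^N$ contenant un voisinage compact de $M$, j'utiliserais le théorème d'approximation de Weierstrass en topologie $\cC^1$ pour approcher chaque composante $f_i$ par un polynôme réel $P_i\in\RR[x_1,\dots,x_N]$. La compacité de $M$ et la semi-continuité inférieure du rang assurent que, pour une approximation suffisamment fine $(P_1,\dots,P_{N-n})$, la différentielle $d(P_1,\dots,P_{N-n})$ reste surjective sur un voisinage $U''$ de $M$. Le théorème des fonctions implicites décrit alors le lieu $V_{U''}=\{x\in U''\st P_1(x)=\dots=P_{N-n}(x)=0\}$ comme une sous-variété lisse de dimension $n$, proche de $M$ au sens $\cC^1$ : on l'identifie au graphe d'une section du fibré normal sur $M$ assez voisine de la section nulle, ce qui la rend difféomorphe à $M$.

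\emph{Troisième étape : projectivisation.} Soit $V\subset\RR^N$ le lieu algébrique affine défini par $P_1=\dots=P_{N-n}=0$. Je définirais alors $X\subset\PP^N$ comme la clôture de Zariski de $V$ via l'inclusion standard $\RR^N\hookrightarrow\PP^N$. La composante de $V_{U''}$ difféomorphe à $M$ étant compacte et entièrement contenue dans la partie affine, elle est disjointe des points à l'infini ajoutés par la projectivisation, et constitue donc une composante connexe $A$ du lieu réel $X(\RR)$ difféomorphe à $M$.

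L'obstacle principal réside dans le contrôle quantitatif de l'approximation $\cC^1$, qui doit simultanément garantir la présence de zéros polynomiaux près de $M$ et la non-dégénérescence du jacobien sur un voisinage fixé de $M$. S'y ajoute la vérification que la composante $A$ est bien séparée des autres composantes éventuelles de $V(\RR)$, qui peuvent apparaître loin de $M$ là où l'approximation n'offre plus aucune garantie : c'est précisément ce qui explique la formulation faible en termes d'une seule composante connexe, l'énoncé plus fort de Tognoli identifiant $M$ à $X(\RR)$ tout entier nécessitant une chirurgie algébrique supplémentaire.
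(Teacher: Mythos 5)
Le texte ne démontre pas ce théorème~: il renvoie à la preuve originale de Nash et à [BCR, Théorème~14.1.8]. Votre stratégie générale (plongement de Whitney, voisinage tubulaire, approximation polynomiale en topologie $\cC^1$, stabilité de la transversalité, clôture projective) est bien celle des preuves classiques, et vos deuxième et troisième étapes sont correctes telles quelles — y compris la remarque finale sur la séparation de la composante $A$ et sur la raison d'être de la formulation faible. La lacune est dans la première étape~: une application lisse $f\colon U\to\RR^{N-n}$ avec $M=f^{-1}(0)$ et $df$ surjective le long de $M$ n'existe que si le fibré normal $\nu$ de $M$ dans $\RR^N$ est \emph{trivial}. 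En effet, $\ker df_x=T_xM$ en tout point $x\in M$, donc $df$ induit un isomorphisme $\nu_x\stackrel{\cong}{\to}\RR^{N-n}$ dépendant continûment de $x$, c'est-à-dire une trivialisation globale de $\nu$. Le recollement par partition de l'unité de submersions locales ne produit pas une submersion (une combinaison convexe de submersions n'en est pas une), et l'obstruction est réelle~: pour toute variété $M$ non orientable, $w_1(\nu)=w_1(TM)\neq 0$ puisque $TM\oplus\nu$ est trivial, donc $\nu$ n'est pas trivial — le plan projectif réel ou la bouteille de Klein, que le théorème doit couvrir, mettent déjà votre construction en défaut. On ne peut pas non plus se contenter de $k>N-n$ équations « non dégénérées » (par exemple $f(x)=x-\pi(x)$ à valeurs dans $\RR^N$)~: le lieu des zéros d'une perturbation $\cC^1$-petite d'un tel système peut s'effondrer ou devenir vide, faute de valeur régulière.

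C'est précisément cette difficulté que les preuves de Nash et de [BCR] contournent, et c'est l'idée qu'il manque à votre proposition. On replonge $M$ dans $\RR^N\times\mathrm{Mat}_N(\RR)$ par le graphe de l'application de Gauss $x\mapsto(x,\,p_x)$, où $p_x$ est la projection orthogonale sur l'espace normal $\nu_x$~; l'image vit sur $\RR^N\times G$, où la grassmannienne $G$ (réalisée comme l'ensemble algébrique non singulier des matrices symétriques idempotentes de trace $N-n$) et son fibré tautologique sont \emph{algébriques}. On remplace alors la condition « $0$ est valeur régulière d'une submersion globale » par « l'application est transverse à la section nulle d'un fibré vectoriel algébrique », condition qui ne requiert aucune trivialité du fibré normal, qui se teste par des équations polynomiales, et qui est stable par perturbation $\cC^1$-petite. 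Une fois ce cadre mis en place, vos étapes d'approximation de Weierstrass et de projectivisation s'appliquent sans changement.
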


On pourra lire une preuve de ce théorème dans \cite{Nash52} ou \cite[Théorème 14.1.8]{BCR}.

À la suite de son théorème, Nash propose deux conjectures qui en renforcent la conclusion. 
La première de ces conjectures affirme qu'il existe une variété $X$ telle que $X(\RR)\approx M$. Cette conjecture  a été prouvée par A.~Tognoli au début des années 70.

\begin{thm}[Tognoli 1973]
On peut, dans l'énoncé de Nash, imposer $X(\RR)$ connexe.
\end{thm}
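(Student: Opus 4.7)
La stratégie consiste à renforcer le théorème de Nash à l'aide de la théorie du cobordisme non orienté. Le théorème de Nash (Théorème~\ref{thm.nash}) nous fournit une variété projective réelle non singulière $X_0$ avec $X_0(\RR) = A \sqcup N$ où $A \approx M$ est une composante connexe. L'objectif est d'éliminer les composantes parasites $N$ et de produire une nouvelle variété $X$ avec $X(\RR) \approx M$ connexe.

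\textbf{Étape 1 (générateurs algébriques du cobordisme).} Le théorème de Thom affirme que l'anneau de cobordisme non orienté $\cN_*$ est une algèbre de polynômes sur $\FF_2$ dont on peut choisir les générateurs en chaque degré parmi des variétés qui admettent une structure de variété algébrique réelle projective non singulière dont le lieu réel \emph{entier} est lisse et difféomorphe au générateur: par exemple les $\RR\PP^{2k}$ et les variétés de Dold, toutes obtenues comme $Y(\RR)$ pour $Y$ projective non singulière (produits et sommes disjointes se réalisent également algébriquement). On en déduit: pour tout $M$ compact lisse sans bord, il existe $Y$ projective réelle non singulière telle que $V := Y(\RR)$ soit lisse et cobordante à $M$ au sens lisse.

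\textbf{Étape 2 (lemme central: réalisation algébrique d'un cobordisme).} On démontre l'énoncé-clé: si $M$ est lisse cobordante à $V = Y(\RR)$ avec $Y$ projective non singulière et $Y(\RR) \approx V$ connexe, alors $M \approx X(\RR)$ pour une certaine variété projective non singulière $X$. L'idée est de plonger le cobordisme $W$ (avec $\partial W = M \sqcup V$) dans un espace euclidien, $V$ étant déjà algébriquement plongée, d'approximer les équations définissant $W$ par des polynômes réels (théorème d'approximation de Weierstrass), puis de considérer l'adhérence projective du lieu algébrique obtenu. Des arguments génériques de transversalité (Sard) garantissent la non-singularité après perturbation, et la résolution des singularités d'Hironaka permet de nettoyer l'éventuel lieu singulier à l'infini sans modifier le lieu réel à difféomorphisme près, produisant ainsi la variété $X$ cherchée.

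\textbf{Conclusion.} Appliquer l'étape~2 au cobordisme fourni par l'étape~1 achève la démonstration. Le point délicat est l'étape~2, où il faut contrôler simultanément (i)~que l'approximation polynomiale ne crée pas de composantes réelles parasites en dehors de $M \cup V$, (ii)~que le passage à l'adhérence projective n'introduit pas de nouveaux points réels à l'infini, et (iii)~que la résolution d'Hironaka utilisée pour assurer la non-singularité laisse le lieu réel inchangé (à difféomorphisme près). La combinaison de ces contrôles — transversalité générique couplée à une résolution suffisamment adaptée — constitue l'essentiel du travail technique original de Tognoli.
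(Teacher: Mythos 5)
Le texte ne démontre pas le théorème de Tognoli : il renvoie à \cite{To73} et à \cite[Théorème~14.1.10]{BCR} en signalant seulement que la preuve repose sur le fait que toute variété $\cC^\infty$ compacte est cobordante à un ensemble algébrique réel compact non singulier. Votre étape~1 coïncide donc exactement avec l'ingrédient que le texte met en avant (Thom--Milnor : générateurs algébriques, $\RR\PP^{2k}$ et variétés de Dold, de l'anneau de cobordisme non orienté).

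En revanche, votre étape~2, qui est le c{\oe}ur technique, comporte une lacune réelle telle qu'elle est rédigée. D'abord, $W$ est une variété à bord : elle n'est pas le lieu des zéros d'une famille de fonctions, de sorte qu'"approximer les équations définissant $W$" n'a pas de sens direct. L'argument effectif ne consiste pas à approximer $W$ mais à se ramener, par doublement du cobordisme, à une variété fermée contenant $V$, puis à approximer une fonction lisse (ou une application vers une grassmannienne classifiant le fibré normal) \emph{relativement à $V$}, c'est-à-dire en gardant $V$ algébrique et fixe ; le théorème de Weierstrass seul ne fournit pas cette version relative, et c'est précisément elle qui empêche l'apparition de composantes réelles parasites et qui garantit que le lieu des zéros du polynôme approximant reste difféomorphe à la variété de départ. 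Ensuite, l'appel à Hironaka "sans modifier le lieu réel" est à justifier : une résolution dont les centres rencontrent le lieu réel change en général la topologie de celui-ci (l'éclatement d'un point réel effectue une somme connexe avec $\RR\PP^{n}$, cf. Proposition~\ref{prop.eclat}) ; il faut donc s'assurer que les singularités à l'infini n'ont pas de points réels, ce qui est possible mais doit être argumenté. Votre plan général est le bon, mais la "transversalité générique couplée à une résolution adaptée" ne remplace pas le théorème d'approximation algébrique relative qui constitue le véritable contenu de \cite{To73}.
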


La preuve (\cite{To73} ou \cite[Théorème 14.1.10]{BCR}) utilise un résultat profond de la théorie du cobordisme qui affirme que toute variété $\cC^\infty$ compacte est cobordante à un ensemble algébrique réel compact non singulier.

Il est alors aisé de construire une variété algébrique réelle dont le lieu réel est réunion des lieux réels de variétés données a priori.

\begin{cor}[Théorème de Nash-Tognoli]
Si $M$ est une variété $\cC^\infty$ compacte sans bord, alors il existe une variété algébrique projective réelle $X$ dont le lieu réel est difféomorphe à~$M$:
$$
M\approx X(\RR).
$$
\end{cor}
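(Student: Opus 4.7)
Le plan est de ramener le cas général au cas connexe déjà acquis. Je commencerais par décomposer $M$ en ses composantes connexes : comme $M$ est compacte, cette décomposition est finie, $M = M_1 \sqcup \cdots \sqcup M_k$, et chaque $M_i$ est une variété $\cC^\infty$ compacte connexe sans bord. Le théorème de Tognoli s'applique alors séparément à chaque composante et fournit, pour tout $i$, une variété algébrique projective réelle $X_i \subset \PP^{n_i}$ telle que $X_i(\RR) \approx M_i$.

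Il s'agit ensuite de recoller les $X_i$ en une unique variété projective réelle $X$ dont le lieu réel soit la réunion disjointe des $X_i(\RR)$. L'idée naturelle est d'employer des sous-espaces linéaires deux à deux disjoints d'un grand espace projectif. Posons $N = \bigl(\sum_i (n_i+1)\bigr) - 1$ et munissons $\PP^N$ de coordonnées homogènes $(z_{i,j})_{1\le i\le k,\ 0\le j\le n_i}$. On plonge chaque $\PP^{n_i}$ dans $\PP^N$ par l'inclusion linéaire qui envoie $(x_0:\cdots:x_{n_i})$ sur le point ayant $x_j$ pour $j$-ème coordonnée du bloc numéro $i$ et $0$ ailleurs. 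L'image $L_i$ est un sous-espace linéaire de $\PP^N$ défini sur $\RR$, et l'image $\widetilde X_i$ de $X_i$ reste une variété algébrique projective réelle. On définit enfin $X := \widetilde X_1 \cup \cdots \cup \widetilde X_k$, qui est un fermé de Zariski de $\PP^N$ défini sur $\RR$, donc une variété algébrique projective réelle (éventuellement réductible).

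Le point à vérifier --- et l'unique ingrédient algébrique non automatique --- est que les sous-espaces $L_i$ sont deux à deux disjoints : tout point commun à $L_i$ et $L_j$ pour $i\ne j$ aurait toutes ses coordonnées homogènes nulles, ce qui est impossible dans $\PP^N$. Les $\widetilde X_i$ sont donc deux à deux disjoints dans $X$, et l'on obtient
$$
X(\RR) = \widetilde X_1(\RR) \sqcup \cdots \sqcup \widetilde X_k(\RR) \approx M_1 \sqcup \cdots \sqcup M_k = M,
$$
ce qui conclut. La seule véritable difficulté du résultat est ainsi renvoyée à Tognoli ; l'extension au cas non connexe, comme annoncé dans le texte, relève d'une construction projective élémentaire.
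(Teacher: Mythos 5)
Votre démonstration est correcte et suit exactement la voie que le texte se contente d'esquisser en une phrase (« il est alors aisé de construire une variété algébrique réelle dont le lieu réel est réunion des lieux réels de variétés données a priori ») : réduction au cas connexe via le théorème de Tognoli, puis réunion disjointe des modèles dans des sous-espaces linéaires deux à deux disjoints d'un grand espace projectif. Les détails que vous explicitez (finitude du nombre de composantes par compacité, disjonction des blocs de coordonnées) sont exacts et complètent utilement l'argument.
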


\subsubsection*{Variétés rationnelles} 

La seconde conjecture de Nash est l'énoncé suivant:
\begin{conj}[Nash 1952]\label{conj.nash}
Soit $M$ est une variété $\cC^\infty$ compacte connexe sans bord, alors il existe une variété $X$ \emph{rationnelle} dont le lieu réel est difféomorphe à~$M$.
\end{conj}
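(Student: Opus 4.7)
The plan is to upgrade the Nash--Tognoli theorem by showing that the algebraic model can be chosen rational. First I would invoke Théorème~\ref{thm.nash} to obtain a smooth projective real algebraic variety $X_0$ such that $X_0(\RR)\approx M$ (using the strengthening by Tognoli to get connectedness of the real locus); by the Hironaka refinement mentioned in the footnote, one may further assume $X_0(\CC)$ smooth. The problem then reduces to modifying the complex birational class of $X_0$ to reach a rational variety while preserving the diffeomorphism type of $X_0(\RR)$.

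The central technical tool is the following observation: if $Z\subset X$ is a smooth closed subvariety defined over $\RR$ with $Z(\RR)=\emptyset$, then the blow-up $\pi\colon\tilde{X}\to X$ is again smooth and defined over $\RR$, and the induced map $\tilde{X}(\RR)\to X(\RR)$ is a diffeomorphism; dually one may contract exceptional divisors whose real locus is empty. The strategy is to run a sequence of such ``imaginary'' birational modifications on $X_0$, guided by a complex-conjugation-equivariant minimal model program, to simplify its birational class. In dimension~$2$ this amounts to contracting all pairs of complex conjugate $(-1)$-curves to reach a real minimal surface, then identifying that surface with $\PP^2$ or another known rational surface; in higher dimension one would iteratively contract imaginary extremal rays of the Mori cone, aiming eventually at $\PP^n$ or at a Fano variety known to be rational over $\RR$.

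A complementary, constructive approach would proceed via direct realisation. Given the embedding $M\hookrightarrow\RR^N$ supplied by Nash--Tognoli, one looks for $M$ as the real locus of a member of a prefabricated family of rational varieties---conic bundles, del Pezzo surfaces, Fano threefolds, etc.---and invokes a parameter count, combined with a Weierstrass-type approximation of $M$ by polynomial level sets inside the family, to conclude by transversality that the generic member of the family whose real locus is close to $M$ is diffeomorphic to~$M$ while being rational by construction.

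The main obstacle, which I expect to be decisive, is controlling the global complex birational geometry under the stringent constraint that every blow-up or blow-down centre remain imaginary. In low dimension, or for $M$ whose topology is simple enough, the supply of imaginary centres is sufficient, but in general one may exhaust such centres before reaching a rational model. Moreover, even when an equivariant MMP terminates, one must still verify that the output is rational \emph{over}~$\RR$, a property strictly stronger than the mere termination of MMP and not directly encoded in the intermediate steps. Identifying the precise class of~$M$ for which the strategy succeeds, and pinpointing the global invariants (Picard group with Galois action, \'etale cohomology of $X(\CC)$, \ldots) that obstruct it otherwise, is the heart of the matter.
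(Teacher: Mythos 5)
The statement you set out to prove is false, and the paper records it as such: \ref{conj.nash} is labelled a \emph{conjecture}, not a theorem, and the paper explains in the surrounding text that it is refuted for smooth projective varieties in every dimension $n>1$ --- by Comessatti's theorem (Théorème~\ref{thm.co}) in dimension $2$, where an orientable surface of genus $g\geq 2$ admits no smooth projective rational model, and by Kollár (Théorème~\ref{thm.kollar}) together with Eliashberg--Viterbo (Corollaire~\ref{cor.vit}) in dimension $3$ and higher. There is consequently no proof in the paper to compare against; only weakened variants hold (topological Nash by Mikhalkin, singular projective Nash for $n\leq 3$ by Benedetti--Marin with ``homéomorphe'' in place of ``difféomorphe'', Moishezon non-projective Nash for $n=3$ by Kollár).

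Beyond this, your strategy contains an internal logical error that shows concretely why it cannot work. Blow-ups and contractions are birational maps, so every variety produced by your conjugation-equivariant MMP lies in the \emph{same} birational class as $X_0$: you cannot ``modify the complex birational class'' by such moves, and your procedure would terminate at a rational variety only if $X_0$ had been rational from the start. Worse, your own key observation --- that modifications with imaginary centres induce a diffeomorphism on $X(\RR)$ --- combined with Comessatti kills the plan outright: take $M$ an orientable surface of genus $g\geq 2$; since every smooth projective rational real surface has real locus $\sS^2$, $\sS^1\times\sS^1$, or a non-orientable surface, no sequence of your moves applied to a Nash model of $M$ can ever reach a rational variety, and your second, constructive approach fails for the same reason, because no family of smooth rational surfaces contains a member whose real locus is $M$. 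The obstruction you were searching for in your final paragraph is thus not a technical shortage of imaginary centres but the topological constraint imposed by rationality itself, which is exactly the content of Théorème~\ref{thm.co} and its higher-dimensional descendants.
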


Cette conjecture avait été contredite par anticipation en 1914 pour les surfaces projectives lisses, puis prouvée au début des années 90 pour les variétés projectives de dimension $3$ singulières, puis contredite à la fin des années 90 pour les variétés de dimension $3$ et plus, lisses et projectives, et enfin prouvée pour les variétés de dimension $3$ lisses mais compactes non-projectives ! 
Nous allons détailler ces résultats portant sur les différentes propriétés des modèles algébriques dans la suite de cette section.

\begin{dfn}\label{p.ratio}\label{dfn.ratio}
Une variété $X$ de dimension $n$ sur un corps $K$ est \emph{rationnelle} si et seulement si elle est birationnellement équivalente à l'espace projectif $\PP_K^n$. C'est-à-dire s'il existe des ouverts de Zariski denses $U\subset X$, $V\subset \PP_K^n$ et un isomorphisme $U\stackrel{\cong}{\longrightarrow} V$ défini par des quotients de polynômes à coefficients dans $K$. 
\end{dfn}

\begin{rems*}
\begin{enumerate}
\item L'éclatement d'une variété le long d'une sous-variété (revoir construction en annexe~\ref{anex.eclat}) est un morphisme birationnel.
\item Dans la conjecture~\ref{conj.nash} les variétés sont rationnelles sur $K=\RR$.
\item Les variétés $\PP^n(K)$ et $K^n$ sont rationnelles sur $K$.
\item La surface $\PP^1\times\PP^1$, les surfaces de Hirzebruch $\FF_k$ (fibrés en $\PP^1$ sur $\PP^1$), les fibrés en coniques (e.g. $x^2+y^2=P(z)$ pour $P\in\RR[z]$) sont des exemples de surfaces rationnelles sur $\RR$.

\end{enumerate}
\end{rems*}

Il est bien connu que lorsqu'une variété algébrique est irréductible, son anneau des fonctions rationnelles est un corps appelé  \emph{corps des fonctions} de la variété. On sait que le corps des fonctions d'une variété algébrique (intègre) de dimension $n$ sur $K$ est une extension de degré fini d'un corps de fractions rationnelles à $n$ indéterminées $K(X_1,\dots,X_n)$. La variété $X$ est alors rationnelle  si et seulement si son corps des fonctions est \emph{isomorphe} à  $K(X_1,\dots,X_n)$.

La Conjecture \ref{conj.nash} est beaucoup plus forte que le Théorème~\ref{thm.nash} puisqu'elle affirme que l'on peut choisir $X$ avec un corps des fonctions de degré~$1$ sur $\RR(X_1,\dots,X_n)$ quel que soit $M$.

Observons pour terminer ce paragraphe qu'une variété algébrique réelle est une variété dont les équations polynomiales sont à coefficients réels, alors qu'une variété rationnelle n'a rien à voir avec le fait que ses équations soient à coefficients rationnels.

\subsection*{Nash pour les surfaces}

La conjecture de Nash était déjà contredite en dimension~$2$ par un théorème de Comessatti de 1914 \cite{Co14}, article peu connu à l'époque de Nash.

\begin{thm}[Théorème de Comessatti]\label{thm.co} 
Soit $X$ une surface projective réelle non singulière. Si $X$ est rationnelle,
alors son lieu réel $X(\RR)$ est difféomorphe à  $\sS^2$, à $\sS^1\times\sS^1$, ou à une surface non orientable. Réciproquement, chacune de ces surfaces topologique admet un modèle algébrique rationnel lisse. 
\end{thm}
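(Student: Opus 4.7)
Le plan est d'utiliser la version réelle du programme des modèles minimaux, puis de contrôler la topologie du lieu réel sous l'effet des transformations birationnelles élémentaires. D'abord, le théorème de contraction de Castelnuovo, adapté au cadre réel, permet de contracter dans $X$ toute $(-1)$-courbe définie sur $\RR$, ainsi que toute paire de $(-1)$-courbes complexes conjuguées disjointes. En itérant, on obtient un morphisme birationnel $X \to X_0$ vers une surface rationnelle réelle lisse \emph{minimale} $X_0$ ; vu en sens inverse, $X$ s'obtient à partir de $X_0$ par une suite d'éclatements en des points réels ou en des paires de points complexes conjugués.

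Ensuite, la classification des surfaces rationnelles réelles lisses minimales fournit une liste explicite de modèles, à savoir : $\PP^2$, la quadrique sphère $\{x^2+y^2+z^2=w^2\}\subset\PP^3$, le produit $\PP^1\times\PP^1$, les surfaces de Hirzebruch $\FF_n$ pour $n\geq 2$, et certaines fibrations en coniques minimales au-dessus de $\PP^1$. Pour chacun de ces modèles dont le lieu réel est non vide, on vérifie que $X_0(\RR)$ est connexe et difféomorphe soit à $\sS^2$ (quadrique sphère), soit à $\sS^1\times\sS^1$ ($\PP^1\times\PP^1$, ou $\FF_n$ avec $n$ pair), soit à une surface non orientable ($\PP^2(\RR)=\RR\PP^2$, bouteille de Klein pour $\FF_n$ avec $n$ impair, et certains lieux réels de fibrations en coniques).

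Reste à analyser l'effet topologique des éclatements : l'éclatement en un point réel $p\in X(\RR)$ remplace un voisinage disque de $p$ par un ruban de Möbius, de sorte que $\widetilde X(\RR) \approx X(\RR) \mathbin{\#} \RR\PP^2$ ; l'éclatement en une paire de points complexes conjugués laisse $X(\RR)$ inchangé. Or la famille $\{\sS^2,\ \sS^1\times\sS^1,\ \text{surfaces non orientables}\}$ est stable par $\Sigma \mapsto \Sigma \mathbin{\#} \RR\PP^2$, puisque $\sS^2 \mathbin{\#} \RR\PP^2 \approx \RR\PP^2$, $\sS^1\times\sS^1 \mathbin{\#} \RR\PP^2 \approx N_3$, et $N_g \mathbin{\#} \RR\PP^2 \approx N_{g+1}$, où $N_g$ désigne la surface non orientable à $g$ bonnets croisés. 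Ceci prouve l'implication directe. Pour la réciproque, il suffit d'exhiber des modèles rationnels lisses : $\{x^2+y^2+z^2=w^2\}\subset\PP^3$ réalise $\sS^2$, $\PP^1\times\PP^1$ réalise $\sS^1\times\sS^1$, et $\PP^2$ éclaté en $g-1$ points réels en position générale réalise $N_g$ pour tout $g\geq 1$.

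La principale difficulté réside dans l'étape de classification des modèles minimaux, en particulier dans l'analyse des fibrations en coniques réelles minimales : il faut y établir que le lieu réel reste connexe et du type topologique voulu, ce qui exige une étude fine de la structure réelle au voisinage des fibres singulières situées au-dessus des points réels.
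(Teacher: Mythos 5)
Votre démonstration suit une route réellement différente de celle du texte : pour l'implication directe, l'article ne donne aucune preuve et renvoie à Comessatti \cite{Co14} (seule la généralisation aux surfaces de Du Val, Théorème~\ref{thm.cm}, est esquissée plus loin, par une méthode toute autre fondée sur des revêtements doubles du cône quadratique) ; pour la réciproque, il construit les modèles par éclatements de points réels de $\PP^2$, le tore étant obtenu en éclatant deux points réels alignés puis en contractant la transformée stricte de la droite — ce qui redonne votre $\PP^1\times\PP^1$. Votre stratégie (MMP réel, classification des modèles minimaux réels, stabilité de la liste $\{\sS^2,\ \sS^1\times\sS^1,\ \text{non orientable}\}$ sous $\Sigma\mapsto\Sigma\#\RR\PP^2$ et invariance par éclatement en une paire de points conjugués) est la preuve moderne standard et elle est correcte dans ses grandes lignes ; elle a le mérite de fournir une vraie démonstration là où le texte cite un résultat, au prix d'admettre la classification des surfaces rationnelles réelles minimales, que vous identifiez à juste titre comme le point dur.

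Deux précisions s'imposent néanmoins. D'abord, le lieu réel d'un fibré en coniques réel \emph{minimal} possédant des fibres singulières n'est jamais une surface non orientable : c'est une réunion disjointe de sphères, disconnexe dès que la fibration est minimale et non localement triviale. Ces modèles minimaux ne sont donc que géométriquement rationnels et doivent être écartés de votre liste, ce qui exige de rendre explicite un argument que vous utilisez tacitement : le nombre de composantes connexes du lieu réel est un invariant birationnel des surfaces projectives réelles lisses (un éclatement centré en un point réel ou en une paire de points conjugués ne le modifie pas), et il vaut $1$ pour $\PP^2$ ; donc $X(\RR)$ est connexe et non vide, ce qui élimine d'emblée les fibrés en coniques minimaux à fibres singulières ainsi que les surfaces minimales à lieu réel vide ou disconnexe. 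Ensuite, c'est ce même argument de connexité qui garantit que la conclusion porte sur $X(\RR)$ tout entier et non seulement sur chacune de ses composantes ; sans lui, votre rédaction ne prouve l'énoncé que composante par composante.
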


En suivant \cite{BM92}, remarquons que l'on obtient un résultat différent si l'on permet des singularités et que l'on remplace "difféomorphe" par "homéomorphe". Pour construire un modèle rationnel pour chaque surface topologique, on commence par éclater $k$ points du plan projectif $\RR\PP^2$ pour obtenir une surface algébrique $X_k$ telle que $X_k(\RR)$ est non orientable de caractéristique d'Euler $1-k$. On obtient ainsi un modèle rationnel lisse de n'importe quelle surface non orientable. Si les $k=2g>0$ points sont choisis alignés sur une droite $H$, on peut contracter la transformée birationnelle $\widetilde H\subset X_{2g}$ pour obtenir une surface algébrique $Y_g$. La surface $Y_1$ est lisse et $Y_1(\RR)$ est difféomorphe au tore, mais si $g>1$, la surface $Y_g$ est singulière au point $P$ (l'image de $\widetilde H$ par la contraction) et $Y_g(\RR)$ est seulement homéomorphe à une surface orientable de genre $g$. 

En effet, comme expliqué dans l'annexe~\ref{anex.eclat}, l'éclatement topologique d'une surface orientable $S_g$ de genre $g$ centré en un point $Q\in S_g$ est difféomorphe à une somme connexe de $2g+1$ plans projectifs $B_QS_g\approx S_g\#\RR\PP^2\approx\RR\PP^2\#\dots\#\RR\PP^2$. En particulier, $S_g\setminus\{Q\}\approx X_{2g}(\RR)\setminus H\approx Y_g(\RR)\setminus \{P\}$, la surface $Y_g(\RR)$ est donc homéomorphe à $S_g$. Et si $g>1$, $Y_g$ ne peut être lisse d'après le théorème de Comessatti. 

Au vu de ces nuances, il est naturel de proposer des variations sur le thème de la conjecture de Nash. Nous dressons un petit panorama des versions envisagées par différents auteurs.

\subsection*{Nash topologique est vraie}

On renvoie à l'annexe~\ref{anex.eclat} pour la définition des éclatements et contractions dans le cadre différentiable. Le résultat suivant, qui peut être vu comme un analogue topologique de la conjecture de Nash, a été prouvé en dimension $3$ par Akbulut et King \cite{AK91} et par Benedetti et Marin \cite{BM92}, puis en toute dimension par Mikhalkin \cite{Mi97}. 

\begin{thm}[\cite{Mi97}]
Toute variété $\cC^\infty$ compacte connexe est difféomorphe à une variété $\cC^\infty$ obtenue à partir de $\RR\PP^n$ par une suite d'éclatements et de contractions différentiables.
\end{thm}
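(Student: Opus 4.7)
Ma stratégie serait de montrer que toutes les variétés $\cC^\infty$ compactes connexes de dimension~$n$ forment une seule classe pour la relation d'équivalence engendrée par les éclatements et contractions différentiables le long de sous-variétés fermées lisses; puisque $\RR\PP^n$ appartient à cette classe, le théorème en résultera. Le point de départ est l'observation, rappelée dans l'annexe~\ref{anex.eclat}, que l'éclatement d'un point $p\in M$ remplace une petite boule centrée en $p$ par un voisinage tubulaire de $\RR\PP^{n-1}$, le résultat étant difféomorphe à la somme connexe $M\#\RR\PP^n$. Plus généralement, éclater le long d'une sous-variété lisse $Z\subset M$ de codimension $k$ substitue au fibré normal en disques de $Z$ le fibré en $\RR\PP^{k-1}$ associé. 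Ces opérations, ainsi que leurs inverses les contractions, fournissent une collection riche de modifications locales avec lesquelles je travaillerai.

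Le cœur de la preuve repose ensuite sur la théorie du cobordisme non orienté. D'après le théorème de Thom, toute variété $\cC^\infty$ compacte $M$ de dimension $n$ est cobordante à une réunion disjointe de produits d'espaces projectifs réels, et chacun de ces produits de dimension~$n$ peut être engendré à partir de $\RR\PP^n$ par une suite explicite d'éclatements et de contractions le long de sous-variétés convenablement choisies. Il suffit donc de montrer que deux variétés cobordantes de même dimension sont équivalentes sous notre relation, la conclusion du théorème s'ensuivant alors par transitivité.

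Pour cela, je décomposerais un cobordisme $W^{n+1}$ entre $M_0$ et $M_1$ en anses, ce qui fournit une suite finie de chirurgies élémentaires, chacune consistant à retirer de la variété un voisinage tubulaire $S^k\times D^{n-k}$ d'une sphère plongée et à le recoller selon $D^{k+1}\times S^{n-k-1}$. L'observation centrale est qu'une telle chirurgie se réalise en éclatant le long de la sphère $Z\simeq S^k$, puis en contractant à l'intérieur du diviseur exceptionnel ainsi créé une sous-variété $Z'$ judicieusement choisie (et qui n'est en général pas le diviseur entier); c'est précisément ce choix décalé qui permet de faire basculer la topologie locale de $M_0$ vers $M_1$. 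Cette dernière étape constitue l'obstacle principal: il faut exhiber, pour chaque chirurgie élémentaire, un couple $(Z,Z')$ effectif et vérifier soigneusement que les fibrés normaux mis en jeu se trivialisent correctement, afin que l'éclatement comme la contraction demeurent de véritables opérations différentiables. Le contrôle fin de ces trivialisations normales, loin d'être automatique, concentre toute la substance technique du théorème.
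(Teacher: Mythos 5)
L'article ne démontre pas ce théorème : il est seulement cité d'après Mikhalkin \cite{Mi97}, le cas de la dimension $3$ étant dû à Akbulut et King \cite{AK91} et à Benedetti et Marin \cite{BM92} ; je ne peux donc confronter votre texte qu'aux démonstrations publiées, dont il partage l'esprit général (cobordisme non orienté plus réalisation de chirurgies par éclatements et contractions). Il contient cependant une erreur factuelle dès le point de départ : il est faux que toute variété compacte de dimension $n$ soit cobordante à une réunion disjointe de produits d'espaces projectifs réels. Le théorème de Thom fournit bien les $\RR\PP^{2i}$ comme générateurs de l'anneau de cobordisme en degré pair, mais les générateurs de degré impair distinct de $2^s-1$ exigent d'autres représentants (les variétés de Dold) : en dimension $5$, tout produit d'espaces projectifs réels contient un facteur $\RR\PP^{1}$, $\RR\PP^{3}$ ou $\RR\PP^{5}$, tous cobordants à zéro, alors que le groupe de cobordisme non orienté en dimension $5$ est non trivial. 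Votre réduction laisse donc échapper des classes entières de variétés. S'y ajoute une difficulté de connexité : un cobordisme relie $M$ à une réunion \emph{disjointe}, tandis que les éclatements et contractions de centres de codimension $\geq 2$ préservent la connexité ; l'énoncé "deux variétés cobordantes sont équivalentes" ne peut donc pas être pris à la lettre et doit être reformulé pour des variétés connexes, ce qui demande un argument supplémentaire.

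La lacune principale reste toutefois celle que vous reconnaissez vous-même : la réalisation d'une chirurgie élémentaire par un éclatement suivi d'une contraction n'est pas un simple "obstacle technique", c'est tout le contenu du théorème, et l'affirmation est douteuse telle quelle. Après éclatement le long de $Z\simeq \sS^k$ à fibré normal trivialisé, le diviseur exceptionnel est $\sS^k\times\RR\PP^{n-k-1}$ ; or une contraction, étant par définition l'opération réciproque d'un éclatement, ne peut contracter qu'une hypersurface munie d'une structure de fibré projectif et du bon fibré normal, et non "une sous-variété $Z'$ qui n'est pas le diviseur entier" — et $\sS^k\times\RR\PP^{n-k-1}$ n'est pas un fibré en espaces projectifs au-dessus du second facteur dès que $k\geq 2$, puisque $\sS^k\not\approx\RR\PP^k$ : la contraction "dans l'autre sens" n'existe donc pas en général. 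C'est précisément pour contourner ce point que Benedetti et Marin introduisent leurs déchirures en dimension $3$ (cf. p.~\pageref{p.dechi}) et que la preuve générale exige un choix soigneux des chirurgies, des trivialisations normales et des centres intermédiaires. En l'état, votre proposition est un programme raisonnable, mais pas une démonstration.
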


\subsection*{Nash projective singulière est vraie si $n\leq3$}

Nous avons expliqué précédemment la preuve de cette version en dimension $2$. Pour les variétés de dimension $3$, il ne suffit plus d'éclater des points et de contracter des diviseurs, il faut aussi autoriser certaines chirurgies le long de n{\oe}uds. Rappelons que topologiquement, toute $3$-variété s'obtient à partir de la sphère $\sS^3$ par chirurgie le long d'un n{\oe}ud. Une \emph{chirurgie} le long d'un \emph{n{\oe}ud}\footnote{C'est-à-dire que $L$ est un cercle plongé dans $M$, on se restreint ici pour simplifier aux n{\oe}uds dont un voisinage tubulaire dans $M$ est orientable. Un voisinage tubulaire fermé de $L$ est alors difféomorphe à $\sS^1\times \DD^2$.} $L$ dans une variété $M$ consiste à recoller un tore solide $T:=\sS^1\times \DD^2$ au bord du complémentaire d'un voisinage tubulaire ouvert $U_L$ de $L$. Ce recollement est réalisé par un difféomorphisme $\varphi\in\Diff(\sS^1\times\sS^1)$ du tore $\sS^1\times \sS^1=\partial \left(M\setminus U_L\right)=\partial T$ sur lui-même. L'opération qui produit $M_\varphi=M\setminus U_L\cup_\varphi T$ à partir de $M$ s'appelle une \emph{chirurgie} le long de $L$. Benedetti et Marin montrent qu'à l'exception de certains types topologiques traités à part, la plupart des variétés de dimension $3$ sont obtenues à partir de $\sS^3$ par éclatements de points et certaines chirurgies qu'ils appellent \emph{déchirures}\label{p.dechi}. Cette présentation des transformations topologiques leur permet de montrer la conjecture de Nash topologique. Ils réalisent alors de manière algébrique les déchirures et obtiennent une variété $Y$ singulière\footnote{Ici "singulière"="éventuellement singulière".} et une résolution des singularités $X\to Y$ telles que $X$ est lisse et birationnelle à $\sS^3$, et $Y(\RR)$ est homéomorphe à $M$.

\begin{thm}[\cite{BM92}]
Soit $M$ une variété $\cC^\infty$ compacte connexe de dimension~3. Alors il existe une variété algébrique projective réelle singulière $X$ rationnelle et telle que $X(\RR)$ soit homéomorphe à $M$.
\end{thm}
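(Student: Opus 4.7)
\emph{Esquisse de preuve.} La stratégie est d'abord topologique, puis algébrique. On part d'une variété $X_0$ projective lisse et rationnelle sur $\RR$ dont le lieu réel est $\sS^3$ : on peut prendre la quadrique de $\PP^4$ d'équation $x_1^2+x_2^2+x_3^2+x_4^2 = x_0^2$, dont le lieu réel est la sphère unité de $\RR^4$, et qui est rationnelle (projection stéréographique à partir d'un point réel). D'après la présentation topologique de Benedetti-Marin évoquée juste avant l'énoncé, toute variété $\cC^\infty$ compacte connexe de dimension $3$ est homéomorphe à une variété obtenue à partir de $\sS^3$ par une suite finie d'éclatements en des points et de déchirures le long de n{\oe}uds. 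Il suffit donc de réaliser chacune de ces deux opérations par une transformation birationnelle sur des variétés algébriques réelles, en itérant à partir de $X_0$.

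L'étape des éclatements est immédiate : étant donnés une variété réelle projective lisse $X$ et un point réel $p\in X(\RR)$, l'éclatement algébrique $B_pX$ est encore projectif, birationnel à $X$, et son lieu réel coïncide avec l'éclatement topologique de $X(\RR)$ en $p$ (cf. annexe~\ref{anex.eclat}). La rationalité est donc préservée à chaque éclatement.

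L'obstacle principal est la réalisation algébrique des déchirures. Étant donnés la variété courante $X$ et un n{\oe}ud $L\subset X(\RR)$ le long duquel on souhaite pratiquer une déchirure, l'étape clé consiste à trouver une courbe algébrique réelle lisse $C\subset X$, dont le lieu réel contient $L$ à isotopie près et dont le germe analytique complexe est standard, quitte à éclater au préalable quelques points réels pour gagner en flexibilité. Une fois $C$ fixée, on construit une modification birationnelle réelle explicite — typiquement l'éclatement le long de $C$ suivi de la contraction d'un diviseur exceptionnel réglé convenable — dont l'effet sur $X(\RR)$ reconstitue le difféomorphisme de recollement $\varphi$ prescrit par la déchirure. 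Le point délicat est le calcul analytique local montrant qu'après contraction, le lieu réel a le bon type topologique (le tore solide $T$ recollé apparaissant comme un voisinage topologique du point singulier de $Y$) et que la singularité obtenue est bien réalisable comme lieu réel d'une singularité algébrique réelle.

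En itérant ces deux constructions, on obtient une variété réelle projective $Y$ birationnelle à $X_0$, donc rationnelle sur $\RR$, et dont le lieu réel est homéomorphe à $M$ par construction. L'autorisation de singularités est ici essentielle : d'après le Théorème~\ref{cor.vit}, une variété hyperbolique de dimension $3$ ne peut être composante connexe d'une variété projective rationnelle lisse ; aucune réalisation lisse n'est donc possible pour de tels $M$, et c'est précisément la liberté offerte par l'étape de contraction qui permet de contourner cet obstacle.
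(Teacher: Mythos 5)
Votre esquisse reprend essentiellement la démarche suivie dans l'article : présentation topologique de Benedetti--Marin de $M$ à partir de $\sS^3$ par éclatements de points et déchirures le long de n{\oe}uds, puis réalisation algébrique de ces opérations aboutissant à une variété singulière, birationnelle à $\PP^3$ donc rationnelle, dont le lieu réel est homéomorphe à $M$. Notez seulement que Benedetti et Marin doivent traiter à part certains types topologiques exceptionnels, nuance que votre formulation "toute variété est obtenue ainsi" passe sous silence, mais qui ne change pas la structure de l'argument.
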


\subsection*{Nash non projective non singulière est vraie si $n=3$}

Une variété projective non singulière est en particulier une variété analytique complexe compacte. Réciproquement, si l'on suppose en plus que le corps des fonctions méromorphes d'une variété analytique complexe compacte est de degré de transcendance maximal (c'est-à-dire égal à la dimension), on obtient une variété qui est très proche d'une variété projective. Pourtant, la conjecture de Nash est satisfaite pour ces variétés en dimension~$3$ alors qu'elle est fausse pour les variétés projectives comme on le verra avec le Théorème~\ref{thm.kollar}.

\begin{dfn}
Une variété analytique complexe non singulière compacte de dimension $n$ est \emph{de Moishezon} si elle possède $n$ fonctions méromorphes algébriquement indépendantes ou de façon équivalente si elle est biméromorphe à une variété projective.\footnote{À mettre en regard avec la discussion qui suit \ref{dfn.ratio}.}
\end{dfn}

Toute surface de Moishezon est projective \cite[IV.5]{BPV}. Les premiers exemples de variétés de Moishezon non projectives en dimension 3 sont dûs à Hironaka, cf. \cite[App.B.3]{Ha77}.

On a vu qu'une variété algébrique est réelle lorsqu'elle est définie par des polynômes à coefficients réels. Nous constatons qu'une variété projective
\begin{multline*}
X=\{(z_0:\dots :z_m)\in\PP^m(\CC) \st\\ 
P_1(z_0:\dots :z_m)=\dots = P_r(z_0:\dots :z_m)=0\}
\end{multline*}  
est réelle si et seulement si $X$ est globalement fixée par la restriction de le conjugaison complexe 
$$
\sigma_0 \colon \PP^m(\CC) \to \PP^m(\CC),\  (z_0:\dots:z_m)\mapsto (\bar z_0:\dots:\bar z_m).
$$

En suivant \cite{Ko-nonproj}, on dira qu'une variété de Moishezon est réelle si elle admet une action de Galois:

\begin{dfn}
Une \emph{variété de Moishezon réelle} est une variété de Moishezon munie d'une involution anti-holomorphe globale $\sigma\colon X\to X$.
\end{dfn}

Par analogie avec le cas projectif, on note alors $X(\RR):=X^\sigma$ le lieu fixe de~$\sigma$.

\begin{thm}[\cite{Ko-nonproj}]
Soit $M$ une variété $\cC^\infty$ de dimension~3 compacte et connexe. Alors il existe une variété de Moishezon réelle $(X,\sigma)$, et une application biméromorphe $\pi\colon \PP^3\dashrightarrow X$ vérifiant $\pi \sigma_0=\sigma\pi$ et telle que $X(\RR)$ soit difféomorphe à $M$.
\end{thm}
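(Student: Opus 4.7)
Je commencerais par la version topologique raffinée de la conjecture de Nash en dimension~$3$, qui présente toute variété $\cC^\infty$ compacte connexe $M$ de dimension~$3$ comme obtenue à partir de $\sS^3$ par éclatements de points et chirurgies d'un type particulier, les \emph{déchirures} de Benedetti-Marin (page~\pageref{p.dechi}). La preuve de Benedetti-Marin fournit déjà une réalisation algébrique de chaque déchirure dans la catégorie projective, mais au prix d'introduire des singularités nodales en certains points réels de la variété ainsi obtenue. Mon plan consiste à montrer que ces singularités peuvent être résolues par des résolutions petites non projectives, biméromorphes à $\PP^3$ et compatibles avec la structure réelle; le passage à la catégorie Moishezon est l'ingrédient essentiel qui distingue cet énoncé du Théorème~\ref{thm.kollar}.

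Première étape: partir de la variété projective singulière $Y$ de Benedetti-Marin, obtenue à partir de $\PP^3$ par une suite d'éclatements et de contractions réels suivis de la réalisation algébrique des déchirures. On dispose ainsi d'une application biméromorphe réelle $\PP^3\dashrightarrow Y$, et les singularités de $Y$ sont des n{\oe}uds réels isolés, localement isomorphes à $\{x^2+y^2+z^2\pm w^2=0\}\subset\CC^4$ munis de la conjugaison complexe standard.

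Deuxième étape: résoudre chacun de ces n{\oe}uds par une résolution petite $X\to Y$. Un point double ordinaire en dimension~$3$ admet deux résolutions petites échangées par un flop; ces résolutions sont en général non projectives, mais toujours de Moishezon puisqu'elles sont biméromorphes à $Y$. Un calcul local sur les modèles $x^2+y^2+z^2\pm w^2=0$ montrerait qu'au moins l'une des deux résolutions petites commute avec $\sigma$, et remplace le point singulier dans $Y(\RR)$ soit par un $\PP^1(\RR)=\sS^1$, soit par le vide, selon la signature du n{\oe}ud. En choisissant de manière équivariante la bonne résolution pour chaque n{\oe}ud, on obtient une variété de Moishezon réelle lisse $(X,\sigma)$ et une application biméromorphe $\pi\colon\PP^3\dashrightarrow X$ respectant les conjugaisons complexes, c'est-à-dire vérifiant $\pi\sigma_0=\sigma\pi$.

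L'obstacle principal sera de vérifier que ces résolutions petites équivariantes produisent exactement la chirurgie topologique prescrite, c'est-à-dire que la modification du lieu réel au voisinage de chaque n{\oe}ud coïncide avec la déchirure initialement voulue par Benedetti-Marin. Ceci requiert d'ajuster en amont les choix locaux de leur construction pour que les n{\oe}uds introduits aient précisément la signature adéquate permettant une résolution petite réelle à l'effet topologique correct. Une fois ce contrôle fin établi, on aura $X(\RR)\approx M$, ce qui achèvera la preuve.
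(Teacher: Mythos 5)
Votre stratégie générale --- réaliser les déchirures de Benedetti--Marin par des flops de type Atiyah, en acceptant de perdre la projectivité tout en restant dans la catégorie de Moishezon --- est bien celle de Koll\'ar telle qu'esquissée ici, mais vous l'implémentez par l'autre côté du diagramme et en vous appuyant sur une affirmation fausse. Le texte décrit la preuve comme une suite $\PP^3=Y_0\dashrightarrow\cdots\dashrightarrow Y_n=X$ où \emph{chaque} $Y_i$ est non singulière : le flop y est réalisé sous la forme $X\stackrel{\pi}{\longleftarrow}X_1\stackrel{\pi'}{\longrightarrow}X'$, où $X_1$ est l'éclatement d'une courbe rationnelle lisse $C$ de diviseur exceptionnel $\PP^1\times\PP^1$ et où $\pi'$ contracte l'autre réglage ; on ne passe jamais par un modèle projectif nodal. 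Le c{\oe}ur de la preuve est précisément l'étape que vous reléguez à un "ajustement en amont" : l'approximation algébrique des rubans de M\"obius plongés, c'est-à-dire la construction d'une courbe réelle $C$ ayant exactement le fibré normal voulu et dont le lieu réel est l'âme du ruban prescrivant la déchirure. Rien ne garantit par ailleurs que la réalisation algébrique des déchirures chez Benedetti--Marin ne produise que des points doubles ordinaires.

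Plus concrètement, l'affirmation selon laquelle "au moins l'une des deux résolutions petites commute avec $\sigma$" pour les modèles $x^2+y^2+z^2\pm w^2=0$ est fausse en signature $(3,1)$ : en écrivant $x^2+y^2+z^2-w^2=(x+iy)(x-iy)+(z+w)(z-w)$, on constate que la conjugaison échange les deux réglages du cône tangent projectivisé, donc échange les deux résolutions petites, et aucune des deux n'est définie sur $\RR$. Seuls les n{\oe}uds de signature $(2,2)$ (lieu réel : cône sur un tore) admettent des résolutions petites réelles remplaçant le sommet par un $\sS^1$ --- ce qui est bien la chirurgie de déchirure --- et ceux de signature $(4,0)$ en admettent dont le $\PP^1$ exceptionnel est sans point réel. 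Votre plan ne peut donc fonctionner qu'en imposant a priori la signature de chaque n{\oe}ud, et ce contrôle (type réel du cône tangent, position du ruban de M\"obius, fibré normal de la courbe à flopper) est exactement le contenu non trivial de la preuve de Koll\'ar ; en l'état, il manque.
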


On peut être plus précis (voir \cite{Ko-nonproj}) et dire qu'il existe une suite d'éclatements et de contractions à centres lisses (Voir Appendice~\ref{anex.eclat})
$$
\PP^3=Y_0\stackrel{\pi_0}{\dashrightarrow} Y_1 \stackrel{\pi_1}{\dashrightarrow} \cdots \stackrel{\pi_{n-1}}{\dashrightarrow} Y_n=X
$$
où pour tout $i$, la variété $Y_i$ est non singulière. De plus cette suite est réelle dans le sens suivant:  chaque variété est munie d'une involution anti-holomorphe globale $\sigma_i\colon Y_i\to Y_i$; $\sigma_n=\sigma$; et ces structures réelles vérifient $\pi_i\sigma_i=\sigma_{i+1}\pi_i$ pour tout $i$.

Pour parvenir à ce résultat,  Koll\'ar utilise la classification de Benedetti-Marin de ce qu'il appelle les "flops topologiques" qui sont un cas particulier des déchirures de la page~\pageref{p.dechi}. Ensuite il montre comment les réaliser par des flops algébriques. Décrivons rapidement le type particulier de \emph{flop algébrique} utilisé par Koll\'ar. Il s'agit d'une application birationnelle $f\colon X\dasharrow X'$ qui se factorise $X\stackrel{\pi}{\longleftarrow} X_1\stackrel{\pi'}{\longrightarrow} X'$ où $\pi$ et $\pi'$ ont un même diviseur exceptionnel $E\subset X_1$ qui est isomorphe à $\PP^1\times \PP^1$, chaque morphisme contractant un des deux facteurs $\PP^1$. % et $f=\pi^{-1}\pi'$. 
La transformation du lieu réel $X(\RR)\dasharrow X'(\RR)$ est alors un flop topologique. 
Réciproquement, l'existence d'une telle transformation sur une variété $X$ de dimension $3$ nécessite une courbe rationnelle $C\subset X$ plongée d'une façon bien particulière :  
\begin{enumerate}
\item le diviseur exceptionnel $E$ de l'éclatement $\pi\colon X_1\to X$  centré en $C$ est isomorphe à $\PP^1\times\PP^1$, 
\item $\pi_{\vert E}\colon E \to \PP^1$ est la projection sur le premier facteur,
\item il existe une contraction  $\pi'\colon X_1\to X'$ de $E$ dont la restriction à $E$ est la projection sur le second facteur. 
\end{enumerate}

La première étape pour prouver le théorème est de réaliser une approximation algébrique convenable de certains rubans de M\"obius plongés représentant des flops topologiques. La seconde étape est obtenue en construisant des flops algébriques grâce à des éclatements qui ne modifient pas le lieu réel. Dans ce processus, la variété $X'$ n'est plus projective en général mais reste de Moishezon puisque le corps des fonctions est préservé par transformation birationnelle. On trouvera la construction complète dans \cite[\S 4]{Ko01real}.

%\newpage
\subsection*{Nash projective non singulière est fausse pour $n>1$}

\subsubsection*{Théorème de Koll\'ar}

L'échec de la conjecture de Nash en dimension~$3$ est une conséquence du théorème ci-dessous, prouvé dans la série d'articles \cite{KoI,KoII,KoIII,KoIV}.
L'énoncé s'applique à une classe de variétés généralisant les variétés rationnelles. 

\begin{dfn}\label{dfn.uni} Une variété réelle ou complexe $X$ de dimension $n$ est \emph{uniréglée} si elle est dominée par un cylindre de même dimension. C'est-à-dire s'il existe une variété $Y$ de dimension $n-1$ et une application rationnelle 
$$
Y\times\PP^1 \dasharrow X
$$
d'image dense pour la topologie de Zariski\footnote{La définition est exactement la même que $X$ soit complexe ou réelle; "uniréglée" est une propriété invariante par changement de base.}. 
\end{dfn}

La variété produit $\PP^{n-1}\times\PP^1$ étant birationnellement équivalente à $\PP^n$, il est  immédiat qu'une variété rationnelle sur $\RR$ ou sur $\CC$ est uniréglée.

Nous rappellerons plus loin les définitions topologiques apparaissant dans le théorème, pour l'instant il suffit de savoir qu'il s'agit de variétés très particulières et que la liste donnée est très loin de recouvrir toutes les variétés de dimension $3$.

\begin{thm}[Koll\'ar  1998~\hbox{\cite[Th.~6.6]{Ko01}}]\label{thm.kollar}
Soit $X$ une variété algébrique projective réelle non singulière de dimension~$3$. Supposons  $X$ uniréglée et  $\xr$ orientable, alors toute composante connexe de $\xr$ est difféomorphe à l'une des variétés suivantes:
\begin{enumerate}
\item une variété de Seifert,
\item une somme connexe d'un nombre fini d'espaces lenticulaires,
\item\label{kollar.fibrestores} un fibré  localement trivial en tores $\sS^1\times\sS^1$ au-dessus de $\sS^1$, ou doublement recouvert par un tel fibré,
\item\label{kollar.except} une variété appartenant à une liste finie d'exceptions,
\item une variété obtenue à partir de l'une des précédentes en effectuant la somme connexe avec un nombre fini de copies de $\RR\PP^3$ et un nombre fini de copies de $\sS^1\times \sS^2$.
\end{enumerate}
\end{thm}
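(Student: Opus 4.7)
La preuve repose sur le Programme des Modèles Minimaux (MMP) équivariant, développé par Koll\'ar dans la série \cite{KoI,KoII,KoIII,KoIV} : l'idée centrale est d'appliquer le MMP à $X$ de manière compatible avec l'involution anti-holomorphe $\sigma$ qui définit la structure réelle de $X$. Puisque $X$ est uniréglée, non singulière et de dimension~3, le MMP $\sigma$-équivariant appliqué à $X$ se termine sur un espace de Mori-fibres réel $X'\to S$ avec $\dim S<3$. Le plan consiste à contrôler, à chaque étape, comment se transforme la topologie du lieu réel.

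Les contractions divisorielles $\sigma$-équivariantes ont pour effet topologique -- sous l'hypothèse d'orientabilité de $X(\RR)$ -- soit de laisser $X(\RR)$ inchangée, soit d'effectuer une somme connexe avec $\RR\PP^3$ ou avec $\sS^1\times\sS^2$, ce qui explique précisément la clause (5) de l'énoncé. Les flips et les flops réels doivent être analysés à part : l'hypothèse que $X(\RR)$ est orientable restreint sévèrement les courbes équivariantes pouvant être contractées, et on établit que leur effet topologique est lui aussi absorbé par des sommes connexes avec $\RR\PP^3$ ou $\sS^1\times\sS^2$, de sorte que l'on se ramène à analyser le seul lieu réel $X'(\RR)$ d'un espace de Mori-fibres.

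Il reste alors à traiter les trois cas selon $\dim S$. Si $\dim S=2$, $X'\to S$ est un fibré en coniques réelles sur une surface : les fibres lisses au-dessus de $S(\RR)$ étant des cercles ou vides, toute composante orientable de $X'(\RR)$ est une variété de Seifert, d'où le cas (1). Si $\dim S=1$, $X'\to S$ est une fibration réelle en surfaces de del Pezzo sur une courbe; selon le degré de la fibre générique et la nature des fibres singulières réelles, l'analyse de la monodromie donne un fibré de Seifert (1), une somme connexe d'espaces lenticulaires (2), ou un fibré localement trivial en tores au-dessus de $\sS^1$ (3). Si enfin $\dim S=0$, alors $X'$ est une variété de Fano réelle non singulière de rang de Picard~1; la classification (sur $\CC$) de ces variétés étant très restrictive, celle de leurs formes réelles à lieu réel orientable fournit la liste finie d'exceptions (4).

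Le principal obstacle technique consiste à rendre le MMP rigoureusement $\sigma$-équivariant et, surtout, à contrôler précisément la topologie du lieu réel sous chaque flip ou flop réel : c'est cette étape qui occupe l'essentiel des quatre articles \cite{KoI,KoII,KoIII,KoIV}, et qui justifie que les opérations apparaissant dans (5) soient exactement les sommes connexes avec $\RR\PP^3$ et $\sS^1\times\sS^2$. Une fois ce cadre établi, les cas (1)--(3) découlent d'une étude géométrique directe des fibrations, tandis que (4) se ramène à une classification cas par cas des formes réelles des variétés de Fano de rang~1 en dimension~3.
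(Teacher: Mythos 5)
Le texte dans lequel s'insère votre proposition est un article de synthèse : le Théorème~\ref{thm.kollar} y est énoncé avec un renvoi à \cite[Th.~6.6]{Ko01} et à la série \cite{KoI,KoII,KoIII,KoIV}, sans démonstration propre. Votre schéma --- MMP équivariant pour l'involution anti-holomorphe, contrôle de la topologie du lieu réel sous les contractions divisorielles et les flips (sommes connexes avec $\RR\PP^3$ et $\sS^1\times \sS^2$, cf. l'Exemple~\ref{ex.blow} et le Théorème~\ref{thm.mmp}), puis discussion selon la dimension de la base du fibré de Mori --- est bien la stratégie que l'article attribue à Koll\'ar ; sur ce plan votre esquisse est fidèle, même si, comme l'article lui-même, elle délègue aux quatre articles cités tout le contenu technique (en particulier la justification que l'orientabilité de $X(\RR)$ force la liste exacte des chirurgies du point~(5)).

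Deux points doivent néanmoins être corrigés. D'abord, le résultat $X^*$ du MMP en dimension~$3$ n'est pas lisse : les flips introduisent des singularités terminales, et dans le cas $\dim S=0$ on aboutit à une variété de Fano \emph{à singularités terminales}, et non à "une variété de Fano réelle non singulière de rang de Picard~$1$". La finitude de la liste d'exceptions~(4) ne provient donc pas de la classification des Fano lisses de dimension~$3$, mais du fait, dû à Kawamata \cite{Kawa92}, qu'il n'existe qu'un nombre fini de familles de variétés de Fano de dimension~$3$ à singularités terminales --- c'est exactement l'argument rappelé dans la preuve du Corollaire~\ref{cor.mw} ; tel quel, votre argument pour~(4) ne s'applique pas à l'objet effectivement produit par le MMP. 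Ensuite, et pour la même raison, $X^*(\RR)$ peut être singulier : la comparaison topologique entre $X(\RR)$ et le modèle minimal passe par la normalisation topologique $\overline{X^*(\RR)}$ (Définition~\ref{dfn.norm}, Théorème~\ref{thm.mmp}), étape que votre rédaction, qui suit des variétés implicitement supposées lisses à chaque pas, escamote.
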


À toutes fins utiles, voici trois définitions topologiques utilisées dans l'énoncé ci-dessus.

\subsubsection*{Somme connexe}\label{somme.connexe}
Soient $M_1$ et $M_2$ deux variétés connexes orientées de même dimension $n$. Soient $B_1\subset M_1$ et $B_2\subset M_2$ des boules ouvertes, les complémentaires $F_1:=M_1\setminus B_1$ et $F_2:=M_2\setminus B_2$ sont des variétés dont le bord est homéomorphe à une sphère $\sS^{n-1}$. En identifiant $F_1$ et $F_2$ le long des sphères qui les bordent par un difféomorphisme compatible avec les orientations induites, on obtient une variété sans bord orientée qui est uniquement déterminée par les variétés orientées $M_1$ et $M_2$ à homéomorphisme près, c'est la \emph{somme connexe} $M_1\#M_2$. Si on note $-M_2$ la variété $M_2$ munie de son orientation opposée, les sommes connexes $M_1\#M_2$ et $M_1\#-M_2$ ne sont pas homéomorphes en général. Signalons néanmoins que les sommes connexes $M_1\#M_2$ et $M_1\#-M_2$ sont homéomorphes lorsque $M_2=\RR\PP^3$ ou $M_2=\sS^1\times \sS^2$, cf. \cite{He76}.

\subsubsection*{Variétés de Seifert}\label{p.var.seifert}

Soit $\sS^1\times \DD^2$ le tore solide où $\sS^1$ est le cercle unité $\{u\in\CC\st \vert u\vert=1\}$ et $\DD^2$ est le disque unité fermé $\{z\in\CC,\ \vert z\vert\leq1\}$. Une \emph{fibration de Seifert}  du tore solide est une application différentiable de la forme 
$$
f\colon \sS^1\times \DD^2 \to \DD^2,\ (u,z)\mapsto u^qz^p,
$$
où $p,q$ sont des entiers naturels, tels que $p\ne 0$ et $(p,q)=1$.\label{p.solid} L'application $f$ est une fibration en cercles qui est localement triviale au dessus du disque épointé $\DD^2\setminus \{0\}$.

Une variété compacte sans bord $M$ de dimension $3$ est dite \emph{de Seifert} %cf. HDR Maillot
si elle admet une application différentiable $g\colon M \to B$ au-dessus d'une surface $B$ telle que chaque point $P\in B$ admet un voisinage fermé $U$ au-dessus duquel la restriction de $g$ à $g^{-1}(U)$ est difféomorphe à une fibration de Seifert du tore solide. En particulier, toute fibre de $g$ est difféomorphe à $\sS^1$ et  $g$ est localement triviale en dehors d'un ensemble fini $\{P_1,\dots,P_k\}\subset B$; la fibre $g^{-1}(P_i)$ étant multiple.

\subsubsection*{Espaces lenticulaires}

Soit $n\in \NN^*$, notons $\mu_n$ le sous-groupe multiplicatif de $\CC^*$ des racines $n$-ièmes\ de l'unité. Soit $0<q<p$ des entiers premiers entre eux. L'\emph{espace lenticulaire} $\LL_{p,q}$ est le quotient de la sphère 
$$
\sS^3=\{(w,z)\in\CC^2\st\vert w\vert^2+\vert z\vert^2=1\}
$$ par l'action de $\mu_p$ définie par 
$$
\zeta\cdot(w,z)=(\zeta w,\zeta^qz),
$$
pour tout $\zeta\in\mu_p$ et tout $(w,z)\in\CC^2$.

Observons que tout espace lenticulaire admet une fibration de Seifert (en fait un tel espace admet une infinité de fibrations de Seifert). Les espaces du type $\LL_{p,1}$  admettent même une fibration localement triviale. Une telle fibration s'obtient par exemple à partir de la fibration de Hopf de la sphère $\sS^3$ au-dessus de la sphère $\sS^2\approx\CC\cup\{\infty\}$. 
$$
\begin{array}{lclc}
 &\sS^3 &\longrightarrow& \sS^2 \\
&(w,z)  &\longmapsto &w/z    
\end{array}
$$

Un quotient cyclique de la fibration de Hopf est une fibration de Seifert au-dessus d'un orbifold de dimension $2$ (voir p.~\pageref{p.orbi} pour la notion d'orbifold) :
$$
\begin{array}{lclc}
 &\LL_{p,q} &\longrightarrow& \sS^2(p,q) \\
&(w,z)  &\longmapsto &w^q/z.    
\end{array}
$$

Nous venons de voir que tout espace lenticulaire est une variété de Seifert. Par contre, à l'exception de $\LL_{2,1}\#\LL_{2,1}=\RR\PP^3\#\RR\PP^3$, une somme connexe d'au moins deux espaces lenticulaires n'admet aucune fibration de Seifert (voir à ce sujet \cite[p.~457]{Sc83}).

\subsubsection*{Théorème de Viterbo}

Parmi les variétés topologiques de dimension $3$, les variétés quotient $\raisebox{-.65ex}{\ensuremath{\Lambda}}\!\backslash \HH^3$ où $\Lambda\subset \PO(3,1)$ est un sous-groupe discret de déplacements\footnote{$\HH^3$ est le demi-espace $\{(x,y,z)\in\RR^3\st z>0\}$.}, c'est-à-dire les variétés \emph{hyperboliques}, sont celles dont la géométrie est la plus riche et que l'on connait le moins bien.

Avec son théorème, Koll\'ar a prouvé qu'à un nombre fini d'exceptions près, les variétés hyperboliques de dimension $3$ ne sont pas uniréglées. Il a conjecturé que cette situation était plus générale. Peu de temps après, Viterbo et Eliashberg ont prouvé cette conjecture en montrant qu'en toute dimension supérieure à deux, les variétés hyperboliques ne sont pas uniréglées, \cite{V99}, \cite[1.7.5]{EGH}.

\begin{thm}\label{thm.vit}
Soit $W$ une variété projective non singulière de dimension complexe $>2$, et $L\subset W$ une sous-variété $\cC^\infty$ plongée qui est lagrangienne pour la structure symplectique sous-jacente à $X(\CC)$. Si $W$ est uniréglée, alors $L$ n'admets pas de métrique riemannienne à courbure sectionnelle strictement négative.
\end{thm}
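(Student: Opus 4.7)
Notre approche combine la théorie des courbes pseudo-holomorphes de Gromov avec le théorème de Cartan-Hadamard. Munissons $W$ d'une structure presque complexe $J$ tamée par sa forme symplectique $\omega$.

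Première étape : borne d'aire uniforme. Nous exploitons le fait que $W$ est uniréglée pour produire une constante $A>0$ telle que par tout point $p\in W$ passe une courbe rationnelle $J$-holomorphe (éventuellement nodale et stable) d'aire symplectique $\leq A$, uniformément en $J$ tamée par $\omega$. Cela résulte de la non-nullité d'un invariant de Gromov-Witten comptant les courbes rationnelles passant par un point, invariant qui est préservé par déformation de $J$.

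Deuxième étape, centrale : production d'un disque. On construit un disque pseudo-holomorphe non constant $u\colon (D^2,\partial D^2)\to (W,L)$ d'aire symplectique $\leq A$. Pour cela, nous étirons $J$ le long d'un voisinage tubulaire de $L$, produisant une famille $\{J_t\}$ qui rend ce voisinage asymptotiquement isométrique à un bout cylindrique modelé sur le fibré cotangent $T^*L$. Le théorème de compacité de la théorie symplectique des champs, appliqué aux sphères $J_t$-holomorphes passant par un point $p\in L$, fournit à la limite un bâtiment pseudo-holomorphe dont au moins une composante est précisément un tel disque $u$, d'aire bornée par $A$.

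Troisième étape : contradiction. Supposons que $L$ admette une métrique à courbure sectionnelle strictement négative. Le théorème de Cartan-Hadamard assure alors que $L$ est asphérique et que son revêtement universel $\widetilde L$ est difféomorphe à un espace euclidien. Le lemme de monotonie pour les courbes pseudo-holomorphes implique que le bord $\gamma=u\vert_{\partial D^2}$ est un lacet de $L$ de longueur majorée par $c\sqrt A$ pour une constante $c>0$ dépendant uniquement de la géométrie près de $L$. En relevant la configuration au revêtement universel convenable d'un voisinage de $L$ dans $W$, l'asphéricité de $\widetilde L$ combinée à la borne uniforme sur l'aire produit une infinité de classes d'homotopie distinctes de sphères pseudo-holomorphes d'aire bornée dans $W$, ce qui contredit le théorème de compacité de Gromov.

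L'obstacle principal est la deuxième étape : la production du disque par étirement du col requiert une application soignée du théorème de compacité SFT (Bourgeois, Eliashberg, Hofer, Wysocki, Zehnder), ainsi qu'un contrôle fin de l'énergie de Hofer et de l'aire sous la famille $\{J_t\}$. La troisième étape, comparativement directe, combine l'inégalité isopérimétrique en courbure négative et l'invariance symplectique par relèvement.
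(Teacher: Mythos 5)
Premier point : l'article ne contient aucune preuve de ce théorème. Il renvoie explicitement à \cite{V99} et \cite[1.7.5]{EGH}, et déclare plus loin renoncer à présenter la théorie symplectique des champs ``par manque de place (et de courage)''. Votre proposition doit donc être jugée par rapport aux preuves originales de Viterbo et d'Eliashberg--Givental--Hofer, dont elle reprend correctement l'armature générale : un invariant de Gromov--Witten à contrainte ponctuelle non nul fourni par l'uniréglage (votre étape 1, qui est juste), puis un étirement du col le long d'un voisinage de $L$.

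Il y a cependant un défaut rédhibitoire : votre argument n'utilise jamais l'hypothèse $\dim_\CC W>2$, alors que l'énoncé est \emph{faux} en dimension complexe $2$. En effet le lieu réel d'une surface projective réelle non singulière est lagrangien (c'est l'exercice classique rappelé dans l'article juste après l'énoncé), et l'éclatement de $\PP^2$ en $k\geq 2$ points réels est rationnel, donc uniréglé, de lieu réel $\#^{k+1}\RR\PP^2$, surface de caractéristique d'Euler négative qui porte une métrique à courbure $-1$. Un raisonnement qui conclut sans la dimension prouve donc trop. Le point précis où il casse est l'étape 2 : l'étirement du col à la SFT ne produit pas un disque $u\colon(D^2,\partial D^2)\to(W,L)$, mais un bâtiment dont la composante passant par le point marqué vit dans le complété de $T^*L$ et possède des pointes asymptotes à des orbites de Reeb de $ST^*L$, c'est-à-dire à des géodésiques fermées de $L$ ; en courbure strictement négative celles-ci sont non contractiles dans $L$, et on ne peut donc pas les boucher pour fabriquer le disque annoncé. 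La véritable conclusion de l'étirement est un calcul d'indice de Fredholm : en courbure négative les géodésiques fermées sont hyperboliques et sans points conjugués, leurs indices de Conley--Zehnder sont nuls, et l'espace des courbes limites dans $T^*L$ soumises à la contrainte ponctuelle est de dimension virtuelle négative précisément lorsque $n\geq 3$ --- c'est là, et seulement là, qu'intervient l'hypothèse de dimension. Votre étape 3 (asphéricité, ``infinité de classes d'homotopie de sphères d'aire bornée'') repose sur ce disque inexistant et sur un mécanisme que vous n'explicitez pas ; notez au passage que le lemme de monotonie ne fournit pas la borne $\ell(\gamma)\leq c\sqrt A$ (l'inégalité isopérimétrique inverse pour les courbes à bord lagrangien donne une borne linéaire en l'aire, non en sa racine).
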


C'est un exercice classique de montrer que le lieu réel $X(\RR)$ d'une variété projective non singulière réelle $X$ est une sous-variété lagrangienne de la variété symplectique sous-jacente à $X(\CC)$ obtenue en considérant la structure de variété kählerienne sur $X(\CC)$ induite par celle de l'espace projectif ambiant.

\begin{cor}\label{cor.vit}
Soit $X$ une variété projective non singulière réelle de dimension $>2$. Si $X$ est uniréglée, alors aucune composante connexe de $X(\RR)$ n'admet de métrique hyperbolique\footnote{Une telle métrique possède une courbure sectionnelle constante égale à $-1$.}.
\end{cor}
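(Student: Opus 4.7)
Le corollaire est une conséquence directe du Théorème~\ref{thm.vit}, appliqué à une composante connexe du lieu réel. Le plan est d'abord de vérifier qu'une telle composante fournit bien une sous-variété lagrangienne plongée dans $X(\CC)$, puis d'invoquer le théorème; enfin on remarque qu'une métrique hyperbolique est à courbure sectionnelle strictement négative.

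Soit $A$ une composante connexe de $\xr$. Comme $X$ est projective non singulière, $A$ est une sous-variété $\cC^\infty$ compacte de $X(\CC)$, de dimension réelle $n:=\dim_\CC X(\CC)$. Pour voir que $A$ est lagrangienne, on note $\omega$ la restriction à $X(\CC)$ de la forme de Fubini--Study $\omega_{FS}$ de l'espace projectif ambiant $\PP^m(\CC)$. La conjugaison complexe $\sigma_0$ sur $\PP^m(\CC)$ étant une isométrie anti-holomorphe pour la métrique de Fubini--Study, elle échange $J$ en $-J$ et vérifie $\sigma_0^*\omega_{FS}=-\omega_{FS}$. En désignant par $\sigma$ l'involution anti-holomorphe définissant la structure réelle de $X$, on a donc $\sigma^*\omega=-\omega$. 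Puisque $\sigma$ fixe point par point $\xr$, sa différentielle induit l'identité sur $T_xA$ pour tout $x\in A$, d'où
$$
\omega_{\vert A}=\sigma^*\omega_{\vert A}=-\omega_{\vert A},
$$
et donc $\omega_{\vert A}=0$. Comme $\dim_\RR A = n = \frac12 \dim_\RR X(\CC)$, cela montre que $A$ est une sous-variété lagrangienne de $X(\CC)$.

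On applique alors le Théorème~\ref{thm.vit} avec $W:=X(\CC)$ et $L:=A$. Comme $X$ est uniréglée et que l'uniréglabilité est invariante par extension de corps (cf.~\ref{dfn.uni}), $W$ est uniréglée; l'hypothèse $\dim_\CC W>2$ provient directement de l'énoncé. Le théorème entraîne que $A$ ne porte aucune métrique riemannienne de courbure sectionnelle strictement négative. Or une métrique hyperbolique est à courbure sectionnelle constante $-1$, donc strictement négative, ce qui conclut. Il n'y a pas d'obstacle technique significatif dans cette preuve : tout le contenu est porté par le Théorème~\ref{thm.vit}, et la seule vérification à faire est le caractère lagrangien de $\xr$, qui est un calcul standard ne dépendant que de l'antilinéarité de $\sigma$ vis-à-vis de la forme de Kähler.
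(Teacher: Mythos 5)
Votre démonstration est correcte et suit exactement la voie du texte : l'article se contente d'invoquer « l'exercice classique » selon lequel $X(\RR)$ est lagrangienne dans $X(\CC)$ pour la structure kählerienne induite, puis applique le Théorème~\ref{thm.vit} en notant qu'une métrique hyperbolique est à courbure sectionnelle strictement négative. Vous ne faites qu'expliciter cet exercice (via $\sigma^*\omega=-\omega$ et la fixation point par point du lieu réel), ce qui est exact et conforme à l'argument sous-entendu par le papier.
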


À la suite de son théorème, Koll\'ar a posé plusieurs questions et proposé plusieurs conjectures concernant la classification des variétés uniréglées réelles, nous y reviendrons en détails en Section~\ref{var.3}.

%%%%%%%%%%%%%%%%%%%%%%%%%
\section{Surfaces rationnelles et bretzels}\label{sec.surf}

Rappelons rapidement la classification topologique des surfaces rationnelles réelles et signalons l'article d'exposition récent \cite{Hu11} sur ces même surfaces.% et le survey à venir sur la géométrie birationnelle de ces surfaces \cite{blm2}.

Une surface obtenue à partir du plan projectif par des éclatements et des contractions dont les centres ne sont pas forcément définis sur $\RR$ est dite \emph{géométriquement rationnelle} ou \emph{rationnelle sur $\CC$}. Si l'on impose que les centres des éclatements et des contractions sont réels\footnote{Ici "réel"="globalement réel", c'est-à-dire que si $E$ est dans le centre d'un éclatement, $\overline{E}$ aussi.}, $X$ est alors rationnelle sur~$\RR$, c'est-à-dire que son corps des fonctions est isomorphe au corps des fractions rationnelles $\RR(X_1,X_2)$ (cf. Définition~\ref{p.ratio} et commentaires).

Précisons la notion d'éclatement décrite en détails dans l'appendice~\ref{anex.eclat} dans le cas d'un point du plan projectif.
L'éclatement $B_{(0:0:1)}\PP^2$ de $\PP^2$ centré en $P=(0:0:1)$ est la surface algébrique $\widetilde\PP^2$ définie localement au-dessus du voisinage $U=(z\ne 0)$ de $P$ par 
$$
B_PU:=\{((x,y),[u:v])\in U_{x,y}\times \PP^1_{u:v} \st uy=vx\}.
$$

Plus généralement, l'éclaté du plan projectif $\PP^2_{x:y:z}$ en un point $P=(a:b:1)$ de l'ouvert affine $(z\ne 0)$ est donné par
\begin{multline*}
B_{(a:b:1)}\PP^2:=\{([x:y:z],[u:v])\in \PP^2_{x:y:z}\times \PP^1_{u:v} \st \\ u(y-bz)-v(x-az)=0\},
\end{multline*}
et en particulier
\begin{multline*}
B_{(0:0:1)}\PP^2:=\{([x:y:z],[u:v])\in \PP^2_{x:y:z}\times \PP^1_{u:v} \st \\ uy-vx=0\}.
\end{multline*}

\begin{prop}
Soit $P\in X$ un point non singulier d'une surface algébrique et $\pi_P\colon B_PX\to X$ l'éclatement de $X$ centré en $P$. La courbe $E_P:=\pi^{-1}\{P\}$ est la \emph{courbe exceptionnelle} de l'éclatement et la restriction de $\pi_P$ à $B_PX\setminus E_P\to X\setminus \{P\}$ est un isomorphisme.
\end{prop}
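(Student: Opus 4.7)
L'assertion étant locale autour de $P$, le plan est de se ramener au modèle explicite $B_{(0:0:1)}\PP^2\to\PP^2$ décrit juste avant l'énoncé. Comme $P$ est non singulier sur la surface $X$, on dispose de paramètres locaux en $P$, fournissant un voisinage ouvert $U\subset X$ de $P$ et un morphisme étale $U\to V\subset\PP^2$ envoyant $P$ sur $(0:0:1)$. Puisque la formation de l'éclatement commute à ce type de morphisme contenant le centre --- propriété standard qui relève de la construction de $B_PX$ détaillée en annexe --- il suffit de démontrer les deux assertions pour le modèle
$$
\pi\colon B_{(0:0:1)}\PP^2=\{([x:y:z],[u:v])\in\PP^2\times\PP^1\st uy-vx=0\}\longrightarrow\PP^2.
$$

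Pour vérifier que $E_P$ est bien une courbe, je substituerais $x=y=0$ dans l'équation $uy-vx=0$ : celle-ci devient triviale, et par conséquent $E_P=\pi^{-1}\{P\}=\{P\}\times\PP^1$ est isomorphe à la droite projective $\PP^1$. Pour établir ensuite que la restriction de $\pi$ est un isomorphisme hors de $E_P$, je considérerais un point $Q=(a:b:1)$ de l'ouvert affine $(z\ne 0)$ distinct de $P$, donc tel que $(a,b)\ne(0,0)$. L'équation $ub-va=0$ dans $\PP^1$ admet alors l'unique solution $[u:v]=[a:b]$, bien définie puisque $a$ ou $b$ est non nul ; ceci établit la bijectivité. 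L'application $Q\mapsto (Q,[a:b])$ étant une section régulière polynomiale de $\pi$, elle fournit l'inverse recherché sur l'ouvert $(z\ne 0)\setminus\{P\}$ ; le traitement symétrique dans les autres cartes affines de $\PP^2$ conclut.

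L'étape qui demande le plus de soin --- et la seule qui soit non calculatoire --- est la réduction initiale au modèle local : il faut s'assurer que la construction générale de l'éclatement d'une surface lisse en un point coïncide, après choix de paramètres locaux, avec le modèle affine explicite décrit ci-dessus. C'est un fait standard, dont la justification repose sur la remarque que la construction $B_PX\to X$ ne dépend, à isomorphisme canonique près, que de l'anneau local $\cO_{X,P}$ et de son idéal maximal ; l'hypothèse de lissité garantit que cet idéal est engendré par deux éléments, ramenant la situation au cas de $\PP^2$ en $(0:0:1)$. Le reste de la démonstration se résume aux deux calculs directs ci-dessus.
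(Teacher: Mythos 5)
Votre démonstration est correcte et suit essentiellement la voie que l'article laisse implicite : la proposition y est énoncée sans preuve, comme conséquence directe du modèle explicite $B_{(0:0:1)}\PP^2=\{uy-vx=0\}$ donné juste avant, et vos deux calculs (fibre $\{P\}\times\PP^1$ au-dessus de $P$, section $Q\mapsto(Q,[a:b])$ inverse de $\pi$ hors de $P$) explicitent exactement cet argument. La réduction au modèle local par choix de paramètres locaux en un point lisse, que vous prenez soin de justifier, est bien le point que l'article passe sous silence ; votre remarque qu'elle ne dépend que de l'anneau local $\cO_{X,P}$ et de son idéal maximal est la bonne.
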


Si $P$ est réel,  la transformation topologique du lieu réel correspond à la chirurgie suivante : on retire à $X(\RR)$ un disque centré en $P$ (dont le bord est un cercle) et on le remplace par un ruban de M\"obius (dont le bord est aussi un cercle) pour obtenir $B_PX(\RR)$. Ce qui donne:
$$
B_PX(\RR)=X(\RR)\#\RR\PP^2.
$$

\begin{defi}
On  appelle \emph{contraction} l'opération réciproque de l'éclatement.  
\end{defi}

Bien entendu, on ne peut pas contracter n'importe quelle courbe vers un point lisse alors qu'il est possible de centrer un éclatement en n'importe quel point lisse. Pour les surfaces nous avons le critère suivant:

\begin{thm}[Critère de Castelnuovo]
Soit $Y$ une surface projective et $E\subset Y$ une courbe isomorphe à $\PP^1$ qui vérifie\footnote{Sur le $\ZZ$-module libre engendré par les courbes d'une surface projective, il existe une forme bilinéaire symétrique non dégénérée : la \emph{forme intersection}, cf. e.g. \cite[Chapter~V]{Ha77}.} $E\cdot E=-1$,
alors il existe une surface projective $X$ et un morphisme $\pi\colon Y\to X$ tel que $P=\pi(E)$ soit un point lisse de $X$ et $\pi$ soit l'éclatement de $X$ centré en $P$. 
\end{thm}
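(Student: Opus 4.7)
Plan de preuve : Je suivrais l'approche classique consistant à construire explicitement le morphisme $\pi$ au moyen d'un système linéaire judicieusement choisi, puis à vérifier que la contraction obtenue possède bien les propriétés requises.

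Je fixerais un diviseur très ample $H$ sur $Y$ (quitte à le remplacer par un multiple assez grand pour que $H^1(Y, \cO_Y(H)) = 0$), et je poserais $k := H \cdot E > 0$. Le candidat naturel est alors le diviseur $D := H + kE$, caractérisé par la relation $D \cdot E = 0$. La première étape technique consiste à établir par récurrence sur $i$ l'annulation $H^1(Y, \cO_Y(H + iE)) = 0$ pour $0 \leq i \leq k$, via les suites exactes courtes
$$
0 \to \cO_Y(H + (i-1)E) \to \cO_Y(H + iE) \to \cO_E(H + iE) \to 0.
$$
Le point clé est que le faisceau $\cO_E(H + iE)$ a pour degré $k - i \geq 0$ sur $E \cong \PP^1$, et possède donc un $H^1$ nul. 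On en déduit classiquement que $|D|$ est sans point base, définit un morphisme $\pi \colon Y \to X \subset \PP^N$ qui sépare les points et les directions tangentes hors de $E$, et contracte $E$ sur un unique point $P$ puisque $D\vert_E$ est de degré zéro.

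Le c\oe ur de la démonstration, et l'étape qui constitue l'obstacle principal, est d'établir que $P$ est un point lisse de $X$. Les annulations cohomologiques précédentes entraînent $\pi_* \cO_Y = \cO_X$, et donc la normalité de $X$ en $P$. Pour la régularité proprement dite, j'exhiberais deux éléments de l'anneau local $\cO_{X,P}$ dont les relèvements à $Y$ engendrent l'idéal de $E$ dans un voisinage formel de $E$ ; ceci suffit à garantir que la dimension de plongement de $\cO_{X,P}$ vaut $2$, forçant la régularité puisque $\dim \cO_{X,P} = 2$. Cette analyse formelle, où l'on compare le voisinage de $E$ dans $Y$ à celui du diviseur exceptionnel d'un éclatement de $\aA^2$, est le point technique le plus délicat.

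Enfin, la lissité de $P$ étant acquise, le fait que $\pi$ coïncide avec l'éclatement de $X$ en $P$ est une conséquence formelle de la propriété universelle de l'éclatement : $\pi$ est un morphisme birationnel propre entre surfaces lisses, qui est un isomorphisme hors de $P$ et dont la fibre exceptionnelle $E$ est une courbe rationnelle lisse de self-intersection $-1$. Ceci caractérise à isomorphisme canonique près l'éclatement ponctuel et achève la preuve.
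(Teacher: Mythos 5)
Votre esquisse est correcte et suit essentiellement la même voie que l'article, lequel ne rédige pas de preuve propre mais renvoie explicitement à \cite[V.5]{Ha77} : l'argument de cette référence est précisément celui que vous reproduisez (système linéaire $|H+kE|$ avec $k=H\cdot E$, annulation de $H^1$ par récurrence sur les suites exactes de restriction à $E$, contraction de $E$ sur un point normal puis lisse via l'analyse du voisinage formel, et identification finale avec l'éclatement par la propriété universelle ou la factorisation des morphismes birationnels entre surfaces lisses). Rien à redire sur le fond ; le seul point que vous signalez à juste titre comme délicat, la lissité de $P$, se traite chez Hartshorne par le théorème des fonctions formelles en calculant $\widehat{\cO}_{X,P}$ comme une algèbre de séries formelles à deux variables, ce qui correspond bien à votre description.
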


Pour une preuve, cf. e.g.~\cite[V.5]{Ha77}.
Plus généralement, on peut contracter des courbes vers des points qui ne sont pas forcément lisses. Soit $E\subset Y$ une courbe projective, connexe et réduite et sur une surface projective lisse $Y$ et $E=\sqcup E_i$ sa décomposition en composantes irréductibles.

\begin{thm}[Grauert]
Il existe une surface projective $X$ et un morphisme birationnel $\pi\colon Y\to X$ tel que $P=\pi(E)$ soit un point de $X$ et la restriction de $\pi$ à $Y\setminus E\to X\setminus P$ soit un isomorphisme si et seulement si la matrice $(E_i\cdot E_j)_{i,j}$ est définie négative.
\end{thm}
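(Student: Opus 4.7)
\emph{Stratégie de preuve.} Pour le sens direct, je propose d'invoquer l'indice de Hodge. Soit $A$ un diviseur ample sur $X$ et $\tilde H=\pi^*A$: alors $\tilde H^2=A^2>0$ et $\tilde H\cdot E_i=0$ pour tout $i$. Étant donné $D=\sum n_iE_i$ non nul, on le décompose $D=D_+-D_-$ en parties effectives à supports disjoints. Lorsque $D_+$ est non nulle, elle est effective et $\pi$-exceptionnelle, donc non numériquement triviale (un diviseur effectif non nul sur une surface projective a intersection strictement positive avec tout diviseur ample); puisque $D_+\in\tilde H^\perp$, l'indice de Hodge donne $D_+^2<0$, et de même pour $D_-$. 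Avec $D_+\cdot D_-\geq0$, on obtient
$$
D^2=D_+^2-2\,D_+\cdot D_-+D_-^2<0,
$$
établissant la définitude négative de $(E_i\cdot E_j)$.

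Pour le sens réciproque, qui constitue le c{\oe}ur du théorème, je partirai d'un diviseur ample $H$ sur $Y$. L'inversibilité de $M=(E_i\cdot E_j)$ permet de résoudre dans $\QQ$ le système
$$
\sum_j a_j\,(E_i\cdot E_j)=-H\cdot E_i,\quad i=1,\dots,k,
$$
et en posant $D=\sum_j a_jE_j$, le $\QQ$-diviseur $H+D$ satisfait $(H+D)\cdot E_i=0$ pour tout $i$, tandis que la relation $D\cdot(H+D)=0$ donne $(H+D)^2=H^2-D^2\geq H^2>0$ puisque $D^2\leq 0$. En chassant les dénominateurs, on dispose d'un diviseur entier $L=n(H+D)$ vérifiant $L\cdot E_i=0$ pour tout $i$ et $L^2>0$.

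L'étape décisive, qui concentre tout l'obstacle du théorème, consiste à montrer que (quitte à remplacer $L$ par un multiple ou à le corriger) le système linéaire associé définit un morphisme birationnel $\varphi\colon Y\to X$ contractant exactement $E$. Le passage des conditions purement numériques à la semi-amplitude effective n'est pas formel et requiert des outils profonds: en caractéristique zéro, la voie canonique combine le théorème d'annulation de Kawamata-Viehweg appliqué à $L-K_Y$ avec le théorème de liberté de base; plus classiquement, la construction d'Artin algébrise une contraction formelle dans le voisinage formel $\hat Y_E$ grâce au théorème d'existence de Grothendieck, en exploitant des annulations cohomologiques sur un \emph{cycle fondamental} $Z=\sum b_iE_i$ (avec $b_i>0$ et $Z\cdot E_i\leq 0$, dont l'existence est assurée par la négativité de $M$). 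Une fois $\varphi\colon Y\to X:=\mathrm{Proj}\bigoplus_m H^0(Y,\cO_Y(mL))$ obtenu, la connexité de $E$ entraîne que $\varphi(E)$ se réduit à un unique point $P$, et la propriété $L\cdot C>0$ pour toute courbe $C\not\subset E$ (à vérifier dans la construction) donne l'isomorphisme $\varphi_{|Y\setminus E}\colon Y\setminus E\to X\setminus\{P\}$.
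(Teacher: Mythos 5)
L'article ne démontre pas cet énoncé~: il renvoie simplement à \cite{BPV} (\og Cf. e.g.~\fg), il n'y a donc pas de preuve de référence à laquelle comparer la vôtre, et je juge votre esquisse sur pièces. Le sens direct est correct et complet~: c'est l'argument classique via l'indice de Hodge, on prend $A$ ample de Cartier sur $X$, et votre décomposition $D=D_+-D_-$ traite proprement le point délicat (une combinaison $\sum n_iE_i$ pourrait a priori être numériquement triviale~; l'effectivité de $D_\pm$ et l'intersection avec un ample sur $Y$ l'excluent).

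Le sens réciproque, en revanche, présente une lacune réelle, que vous signalez honnêtement mais que vos deux pistes ne comblent pas. (i) La voie \og Kawamata--Viehweg + théorème de liberté \fg{} ne s'applique pas en général~: ce théorème exige que $aL-K_Y$ soit nef et big pour un certain $a$, or $(aL-K_Y)\cdot E_i=-K_Y\cdot E_i$ est strictement négatif dès qu'une composante $E_i$ est de genre assez grand (par exemple $g(E_i)=2$ et $E_i^2=-1$ donnent $K_Y\cdot E_i=2g-2-E_i^2=3$)~; la contraction d'une configuration définie négative n'est pas une étape du MMP. (ii) La voie d'Artin via le cycle fondamental repose sur l'annulation $\hH^1(\cO_Z)=0$, qui caractérise les singularités \emph{rationnelles}~; sans elle, Artin ne fournit qu'un espace algébrique et Grauert qu'un espace analytique normal. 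Ce n'est pas un détail technique~: tel quel, avec $X$ \emph{projective}, le sens \og si \fg{} est même faux en général. Prenez pour $E$ la transformée stricte d'une cubique lisse de $\PP^2$ passant par $10$ points très généraux de cette cubique~: $E^2=-1$, la matrice est définie négative, mais tout $L$ avec $L\cdot E=0$ se restreint à $E$ en un élément de degré $0$ non de torsion de $\Pic^0(E)$ (sauf $L=0$), donc aucun multiple de $L$ n'est sans point base le long de $E$ et la contraction n'existe pas dans la catégorie projective. L'énoncé correct --- celui de \cite{BPV} que cite l'article --- vit dans la catégorie analytique (contraction vers une surface analytique normale), ou dans celle des espaces algébriques~; c'est précisément pourquoi votre \og étape décisive \fg{} résiste. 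Signalons enfin un point plus mineur~: pour obtenir $L\cdot C>0$ hors de $E$, il vous faut $a_j\geq 0$, ce qui découle du lemme standard selon lequel l'inverse d'une matrice définie négative à coefficients hors-diagonaux positifs ou nuls est à coefficients négatifs ou nuls.
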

Cf. e.g.~\cite{BPV}.

De nombreux auteurs appellent \emph{bretzel} à $g$ trous une surface topologique orientable de genre $g$. Le résultat central (dont on a déjà vu un corollaire en \ref{thm.co}) pour les surfaces réelles géométriquement rationnelles  est que les bretzels autorisés n'ont qu'un seul trou.

\begin{thm}\cite{Co14}
Soit $X$ une surface algébrique projective réelle non singulière obtenue à partir du plan projectif $\PP^2$ par un nombre fini d'éclatements et de contractions de centres lisses (réels ou non réels).
Toute composante connexe orientable du lieu réel $X(\RR)$ est difféomorphe à une sphère $\sS^2$ ou à un tore $\sS^1\times\sS^1$.
\end{thm}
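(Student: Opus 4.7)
Le plan est de se ramener à un modèle minimal via le programme des modèles minimaux (MMP) sur $\RR$, puis d'invoquer la classification des surfaces géométriquement rationnelles réelles minimales, due à Comessatti et complétée par Iskovskikh. Puisque $X$ est par hypothèse birationnellement équivalente à $\PP^2$ sur $\CC$, elle est \emph{géométriquement rationnelle}, et le MMP réel fournit un morphisme birationnel $\varphi\colon X \to X_{\min}$ décomposé en contractions élémentaires : chacune contracte soit une $(-1)$-courbe réelle $E$ (avec $E(\RR) = \emptyset$ ou $E(\RR) \simeq \sS^1$), soit une paire $\{E,\bar E\}$ de $(-1)$-courbes conjuguées disjointes sans point réel.

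La première étape consistera à analyser l'effet topologique, sur le lieu réel, des éclatements inverses de ces contractions. L'éclatement d'un point réel $P$ appartenant à une composante $C \subset Y(\RR)$ transforme $C$ en $C \# \RR\PP^2$, alors que l'éclatement d'une paire de points conjugués non-réels laisse $Y(\RR)$ inchangé. L'ajout d'un facteur $\RR\PP^2$ rendant une composante non orientable, toute composante orientable de $X(\RR)$ doit donc provenir, sans modification, d'une composante orientable de $X_{\min}(\RR)$ n'ayant subi aucun éclatement en un point réel, et lui être difféomorphe.

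Il restera à vérifier la conclusion pour les modèles minimaux eux-mêmes. La classification donne pour $X_{\min}$ : $\PP^2$ (lieu réel $\RR\PP^2$, non orientable); une quadrique lisse de $\PP^3$ (lieu réel $\emptyset$, $\sS^2$, ou $\sS^1 \times \sS^1$); une surface de Hirzebruch $\FF_n$ munie de sa structure réelle standard (tore si $n$ pair, bouteille de Klein si $n$ impair); une surface de del Pezzo réelle minimale de degré au plus~$4$; ou un fibré en coniques réel minimal. Pour les trois premières familles, une inspection directe conclut. L'obstacle principal sera le cas des surfaces de del Pezzo minimales et des fibrés en coniques minimaux : il faudra montrer, par une analyse fine de la $\Gal(\CC/\RR)$-action sur le groupe de Picard et sur la configuration des $(-1)$-courbes, que les composantes orientables du lieu réel sont toutes des sphères --- un tore forcerait l'existence d'une fibration en $\PP^1$ qui ramènerait au cas déjà traité des surfaces de Hirzebruch.
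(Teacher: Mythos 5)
L'article ne démontre pas cet énoncé : il le cite simplement d'après \cite{Co14}, la seule direction argumentée dans le texte étant la réciproque (construction de modèles) et, en section~5, une généralisation aux surfaces de Du Val dont la preuve n'est elle-même qu'esquissée via des revêtements doubles du cône quadratique. Il n'y a donc pas de preuve interne à laquelle comparer la vôtre ; je la juge sur le fond. Votre stratégie --- MMP sur $\RR$, effet topologique des contractions élémentaires (somme connexe avec $\RR\PP^2$ pour un centre réel, rien pour une paire de points conjugués), puis classification des surfaces géométriquement rationnelles réelles minimales --- est la démonstration moderne standard et elle aboutit. La réduction aux modèles minimaux est correcte : une composante orientable de $X(\RR)$ n'a subi aucun éclatement réel et s'identifie donc à une composante de $X_{\min}(\RR)$.

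Trois réserves cependant. D'abord des imprécisions : une $(-1)$-courbe réelle contractée vers un point lisse réel a toujours $E(\RR)\simeq\sS^1$ (c'est le diviseur exceptionnel d'un éclatement de point réel), jamais $E(\RR)=\emptyset$ ; les surfaces de del Pezzo réelles minimales de rang de Picard $1$ sont de degré $1$, $2$, $8$ ou $9$, et non de degré au plus $4$ ; enfin un fibré en coniques minimal sans fibre dégénérée réelle peut avoir un tore pour lieu réel sans être une surface de Hirzebruch (par exemple $x^2+y^2=f(t)$ avec $f>0$ partout), ce qui n'invalide pas la conclusion mais montre que le tore ne se \emph{ramène} pas toujours au cas déjà traité. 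Ensuite, et c'est le point sérieux, l'intégralité de la difficulté du théorème est concentrée dans l'étape que vous différez : établir la liste des modèles minimaux et exclure, pour chacun, les composantes orientables de genre $\geq 2$. Telle quelle, votre esquisse reporte tout le contenu sur une classification admise. Signalons qu'un argument direct court-circuite cette classification : une composante orientable $M\subset X(\RR)$ est lagrangienne dans $X(\CC)$, son fibré normal est isomorphe à son fibré cotangent, donc $[M]^2=-\chi(M)=2g-2$ dans $H_2(X(\CC);\ZZ)$ ; comme $M$ est lagrangienne, $[M]$ est orthogonale à la classe de Kähler, et puisque $b_1(X)=0$ et $p_g(X)=0$ pour une surface géométriquement rationnelle, le théorème de l'indice de Hodge donne $[M]^2\leq 0$, d'où $g\leq 1$.
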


L'énoncé précédent est plus général que \ref{thm.co} car nous n'avons pas imposé que les centres des éclatements soient réels. 
La surface $X$ est en fait birationnelle à $\PP^2$ par une application dont les composantes sont à coefficients complexes mais pas forcément réels, c'est une surface \emph{géométriquement rationnelle}.\label{geom.rat}
Plus généralement, nous énonçons ci-dessous la classification pour d'autres surfaces "proches" (voir définitions et discussion p.~\pageref{var.uni}) des surfaces rationnelles. 

\begin{nota} 
Pour la description des types topologiques, nous utiliserons les conventions suivantes:
La sphère de dimension $n$ est notée $\sS^n$, le tore $\sS^1\times \sS^1$, le plan projectif réel $\RR\PP^2$. Pour deux variétés lisses de même dimension $A$ et $B$,
 $A\sqcup B$ est leur réunion disjointe, $A \# B$ leur somme connexe\footnote{En dimension $2$, la somme connexe, définie a priori dans la catégorie des surfaces orientées, ne dépend pas des orientations choisie, on définit donc la somme connexe dans la catégorie des surfaces.}, $\sqcup^sA$ la réunion disjointe de $s$ copies de $A$ et $\#^kA$ la somme connexe de $k$ copies de $A$. Par exemple, $\#^{g}\RR\PP^2$ est la surface non orientable de caractéristique d'Euler $2-g$. Toujours par convention, $\sqcup^0A=\#^0A=\emptyset$.
\end{nota}

\begin{thm}[Classification]
Soit $X$ une surface algébrique projective réelle non singulière.
\begin{enumerate}
\item Si $X$ est rationnelle,
alors $X(\RR)$ est difféomorphe à l'une des surfaces suivantes~:
\begin{enumerate}
\item $X(\RR)\approx \sS^1\times\sS^1$;
\item $X(\RR)\approx \sS^2$;
\item  $\#^{g}\RR\PP^2$ pour $g\in\NN$.
\end{enumerate}

\item Si $X$ est géométriquement rationnelle, alors $X(\RR)$ est difféomorphe à l'une des surfaces suivantes~:
\begin{enumerate}
\item $X(\RR)\approx \sS^1\times\sS^1$;
\item $X(\RR)\approx \sqcup^s\sS^2\sqcup  \#^{g_1}\RR\PP^2\sqcup\dots \sqcup  \#^{g_l}\RR\PP^2$ 

\hfill{pour $s,l,g_1,\dots,g_l\in\NN$.}
\end{enumerate}

\item Si $X$ est uniréglée, alors il existe des entiers naturels $t,s,l,g_1,\dots,g_l$ tels que 
$$
X(\RR)\approx \sqcup^t\left(\sS^1\times\sS^1\right) \sqcup^s\sS^2 \sqcup  \#^{g_1}\RR\PP^2\sqcup\dots \sqcup  \#^{g_l}\RR\PP^2.
$$

\item Réciproquement, toute surface topologique appartenant à la liste 1 (respectivement 2, respectivement 3) admet un modèle rationnel, (respectivement géométriquement rationnel, respectivement uniréglé).
\end{enumerate}
\end{thm}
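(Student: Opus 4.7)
Le plan est d'appliquer la classification des modèles minimaux aux surfaces projectives réelles et de lire le lieu réel sur chaque modèle, en contrôlant l'effet topologique des éclatements intermédiaires. L'observation clé a déjà été énoncée plus haut: l'éclatement en un point réel lisse $P$ réalise la somme connexe $B_PX(\RR)\approx X(\RR)\#\RR\PP^2$, tandis que l'éclatement de centre $\{Q,\bar Q\}$ formé d'un couple conjugué de points non réels est topologiquement trivial sur le lieu réel, la courbe exceptionnelle étant une paire de $\PP^1$ conjuguées sans point réel. En raisonnant en sens inverse sur les contractions, on ramène la classification de $X(\RR)$ à celle des lieux réels des modèles minimaux, modulo adjonction finale de sommes connexes avec $\RR\PP^2$ par éclatements en des points réels.

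Pour les points (1) et (2), j'utiliserais la classification des surfaces géométriquement rationnelles réelles minimales (Comessatti, Iskovskikh, voir \cite{Si89,Ko01real}): à isomorphisme réel près, de telles surfaces minimales sont $\PP^2_\RR$, des surfaces de Del Pezzo réelles, les surfaces de Hirzebruch $\FF_k$, et des fibrés en coniques minimaux au-dessus de $\PP^1_\RR$. Un calcul direct donne $\PP^2(\RR)\approx \RR\PP^2$, $\FF_k(\RR)\approx\sS^1\times\sS^1$ pour $k$ pair et $\FF_k(\RR)\approx\#^2\RR\PP^2$ (bouteille de Klein) pour $k$ impair, tandis que les surfaces de Del Pezzo et les fibrés en coniques réels donnent des sphères, des sommes connexes de $\RR\PP^2$, ainsi que leurs unions disjointes. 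Dans le cas (1), on invoque la connexité de $X(\RR)$ --- conséquence de $\RR(X)\cong\RR(X_1,X_2)$ appliquée à un morphisme birationnel défini sur $\RR$ --- ce qui élimine les configurations à plusieurs composantes; le cas (2) s'obtient en supprimant cette contrainte de connexité.

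Le cas (3) des variétés uniréglées se traite de manière analogue via un modèle minimal uniréglé, qui est soit géométriquement rationnel, soit un fibré en coniques $X\to B$ au-dessus d'une courbe réelle lisse de genre $g(B)\geq 1$. Dans ce second cas, chaque composante $\sS^1$ de $B(\RR)$ contribue à $X(\RR)$ soit un tore $\sS^1\times\sS^1$, soit une bouteille de Klein, soit une union disjointe de sphères (selon la structure des fibres singulières réelles et le type de la fibre générique), ce qui produit la forme annoncée. Pour la réciproque (point 4), on construit explicitement les modèles en partant de $\PP^2_\RR$ (éclatements itérés pour les sommes de $\RR\PP^2$), de quadriques réelles (pour la sphère et le tore), et de fibrés en coniques sur des courbes réelles à plusieurs composantes (pour ajouter tores et composantes disjointes dans le cas uniréglé). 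L'obstacle principal est la restriction des composantes orientables aux seuls sphère et tore: elle est l'essence du théorème de Comessatti et de son extension au cadre géométriquement rationnel, et repose sur la classification des modèles minimaux ci-dessus combinée à l'invariance topologique des éclatements en points non réels.
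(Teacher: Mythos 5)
Le texte ne démontre pas ce théorème : il est énoncé comme un rappel (avec renvoi à Comessatti et à l'article d'exposition de Huisman), et les paragraphes qui l'entourent — topologie de l'éclatement ($B_PX(\RR)=X(\RR)\#\RR\PP^2$), critère de Castelnuovo, théorème de Comessatti pour les composantes orientables — fournissent précisément les ingrédients que vous mobilisez. Votre stratégie (descente au modèle minimal réel, lecture du lieu réel sur la liste des modèles minimaux géométriquement rationnels ou réglés, puis remontée en contrôlant l'effet des éclatements réels, qui ajoutent $\#\RR\PP^2$, et des éclatements en paires conjuguées, qui sont topologiquement triviaux) est la démonstration classique et elle est correcte dans ses grandes lignes. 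Deux points méritent toutefois d'être explicités. D'une part, l'invariance birationnelle du nombre de composantes connexes de $X(\RR)$, dont vous déduisez le cas (1) du cas (2), ne découle pas directement de l'existence d'un isomorphisme $\RR(X)\cong\RR(X_1,X_2)$ : il faut factoriser une application birationnelle réelle entre surfaces projectives lisses en éclatements de points fermés (réels ou paires conjuguées) via une résolution de l'indétermination, et constater que chaque éclatement préserve ce nombre. D'autre part, tout le poids de l'énoncé repose sur la liste précise des surfaces minimales géométriquement rationnelles réelles et de leurs lieux réels — c'est là le contenu du théorème de Comessatti proprement dit, que vous citez mais ne redémontrez pas ; en particulier la dichotomie 2(a)/2(b), c'est-à-dire le fait qu'un tore ne puisse coexister avec d'autres composantes dans le cas géométriquement rationnel, doit être lue sur cette liste : le tore n'apparaît que comme lieu réel \emph{connexe} d'un modèle minimal, et tout éclatement réel ultérieur sur cette composante en détruirait l'orientabilité.
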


Nous reviendrons aux surfaces rationnelles dans le cas singulier en section~\ref{sec.singsurf}.

%%%%%
\section{Variétés réelles de dimension~3 de 2000 à 2012}\label{sec.uni}

Le Théorème~\ref{thm.kollar} contraint fortement le lieu réel d'une variété uniréglée. Les conjectures que Koll\'ar a proposées à la suite de son théorème décrivent une classification topologique pour ce lieu réel. L'objet de cette partie est de faire le point sur ces conjectures. Nous commençons par rappeler la notion de variété topologique \emph{géométrique} qui nous permettra d'énoncer et de mettre en perspective les résultats présentés.

\subsection*{Variétés topologiques de dimension~3}\label{var.3}

La classification topologique\footnote{identique à la classification $\cC^\infty$ pour les surfaces.} des surfaces connexes compactes sans bord se résume à deux invariants : un invariant binaire, l'orientabilité, et un invariant entier naturel,  la caractéristique d'Euler. Pour les variétés de dimension~$3$, la situation est plus riche. Nous avons décrit en pages~\pageref{p.var.seifert} et \nextpageref{p.var.seifert}
les variété de Seifert et les espaces lenticulaires qui sont deux classes importantes. Nous rappelons dans ce paragraphe la construction des variétés fondamentales pour l'énoncé de la classification complète (classification abordée en Annexe~\ref{anex.dim3}).

\subsubsection*{Variétés $\cC^\infty$ géométriques}

Une variété riemannienne $\Omega$ est \emph{homogène} si son groupe d'isométries $\Isom(\Omega)$ agit transitivement sur $\Omega$.
Une \emph{géométrie} $\Omega$ est une variété riemannienne simplement connexe, homogène qui admet un quotient de volume fini. 
Si $\Omega$ est un groupe de Lie réel, on en fait une variété riemannienne en le munissant d'une métrique invariante à gauche et on parle de "la" géométrie $\Omega$. %\footnote{La notation $\raisebox{-.65ex}{\ensuremath{\Lambda}}\!\backslash M$ est utilisée pour un espace d'orbites sous une action de groupe}

\begin{dfn}\label{dfn.geom}
Une variété $M$ de classe $\cC^\infty$  est \emph{géométrique} si $M$ est difféomorphe au quotient d'une géométrie $\Omega$ par un sous-groupe discret d'isométries $\Lambda\subset \Isom(\Omega)$ agissant sans point fixe. On dit aussi que $M=\raisebox{-.65ex}{\ensuremath{\Lambda}}\!\backslash \Omega$ \emph{admet une structure géométrique modelée} sur $\Omega$. Par extension, une variété à bord est \emph{géométrique} si son intérieur est géométrique.
\end{dfn}

Lorsque $\Omega$ est un groupe de Lie, les hypothèses retenues impliquent l'existence d'un réseau de covolume fini, c'est-à-dire que $\Omega$ est un groupe de Lie \emph{unimodulaire}.

Une variété de dimension $n$ est \emph{sphérique}, (resp. \emph{euclidienne}, resp. \emph{hyperbolique}) si elle admet une géométrie modelée sur $\sS^n$ (resp. $\EE^n$, resp. $\HH^n$\footnote{$\HH^n$ est le demi-espace $\{(x_1,\dots,x_n)\in\RR^n\st x_n>0\}$ muni de la métrique $\frac1{x_n^2}(dx_1^2+\cdots+dx_{n-1}^2+dx_n^2)$.}).

En dimension $2$, le théorème d'uniformisation induit que toute surface compacte admet une géométrie sphérique, euclidienne ou hyperbolique. Il est remarquable que chacune de ces géométries soit de courbure constante.

Ceci n'est plus le cas en dimension~$3$ où en sus des géométries "à courbure constante" $\sS^3$, $\EE^3$ et $\HH^3$, ou trouve les cinq géométries $\sS^2\times \EE^1,\ \HH^2\times \EE^1,\ \widetilde{\SL_2(\RR)},\ \Nil$ et $\Sol$. Thurston a montré qu'à équivalence près, les huit géométries précédentes sont les seules en dimension~$3$ pourvu que l'on impose à leur groupe d'isométries d'être maximal \cite[page~2]{Be-3mfd-10}.
\label{p.8geom} 

Nous ne décrirons pas ici de manière détaillée les huit géométries, (voir \cite{Sc83}) mais nous nous contentons de définir rapidement certains groupes de Lie. Le groupe $\widetilde{\SL_2(\RR)}$ est le revêtement universel de $\SL_2(\RR)$. Le groupe $\Nil$ est le groupe de Heisenberg des matrices $3\times3$ triangulaires supérieures dont les éléments diagonaux sont tous égaux à $1$. Le groupe $\Sol$ est le seul groupe de Lie de dimension $3$ simplement connexe, admettant un quotient de volume fini qui soit résoluble mais non nilpotent, nous le détaillons en page \pageref{p.sol}.

Nous avons vu que toute surface compacte est géométrique.
A contrario une variété $M$ de dimension~3 n'admet pas  en général de structure géométrique. En revanche, lorsque $M$ admet une structure géométrique, cette dernière est unique pourvu que $M$ soit de volume fini. 

Un résultat important de la théorie est que les variétés de Seifert admettent une structure géométrique, on a même une caractérisation des six géométries "de Seifert" (voir \cite{Sc83} par exemple).\label{p.seifert} 

\begin{prop}\label{prop.seifert}
Une variété $\cC^\infty$ compacte sans bord $M$ de dimension $3$, orientable,
est une variété de Seifert si et seulement si $M$ admet une géométrie modelée sur l'une des six géométries
$$
\sS^3,\ \sS^2\times \EE^1,\ \EE^3,\ \Nil,\ \HH^2\times \EE^1,\ \widetilde{\SL_2(\RR)}.
$$
\end{prop}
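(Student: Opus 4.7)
Le plan est de traiter les deux implications séparément, en s'appuyant sur la dualité entre la structure fibrée en cercles d'une variété de Seifert et l'existence, pour chacune des six géométries visées, d'un sous-groupe à un paramètre distingué dans son groupe d'isométries.

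\textbf{Sens : géométrique $\Rightarrow$ Seifert.} Pour chaque géométrie $\Omega$ de la liste, je commencerais par exhiber un feuilletage naturel par orbites de cercles invariant sous un sous-groupe à un paramètre $Z\subset\Isom(\Omega)$ contenu dans le centre d'un sous-groupe d'isométries de grand indice. Explicitement : pour $\sS^2\times\EE^1$ et $\HH^2\times\EE^1$ c'est la translation sur le second facteur ; pour $\EE^3$ c'est n'importe quelle translation ; pour $\sS^3$ c'est la fibration de Hopf (cf. p.~\pageref{somme.connexe}) ; pour $\Nil$ et $\widetilde{\SL_2(\RR)}$ c'est le centre unidimensionnel du groupe de Lie, dont l'action par multiplication à gauche descend aux quotients. Étant donné $\Lambda\subset\Isom(\Omega)$ discret, sans point fixe, de covolume fini, il faudrait passer à un sous-groupe $\Lambda'$ d'indice fini qui normalise $Z$ (ce qui est automatique sauf possiblement dans les cas $\sS^2\times\EE^1$ et $\EE^3$ où l'on peut être amené à considérer un revêtement fini), puis constater que la projection $\raisebox{-.65ex}{\ensuremath{\Lambda'}}\!\backslash\Omega\to\raisebox{-.65ex}{\ensuremath{\Lambda'}}\!\backslash\Omega/Z$ est une fibration de Seifert au-dessus d'un orbifold de dimension $2$, avant d'en déduire la structure de Seifert sur $M$ lui-même (éventuellement après une descente par un revêtement fini, en utilisant que la propriété d'être de Seifert est stable par ce type de quotient pour une variété orientable).

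\textbf{Sens : Seifert $\Rightarrow$ géométrique.} Soit $g\colon M\to B$ une fibration de Seifert de base l'orbifold $B$. Les deux invariants pertinents sont la caractéristique d'Euler orbifold $\chi^{\mathrm{orb}}(B)\in\{+,0,-\}$ et le nombre d'Euler rationnel $e(g)\in\QQ$ de la fibration (nul ou non). On appliquerait d'abord le théorème d'uniformisation pour orbifolds à $B$, ce qui donne à $B$ une géométrie modelée sur $\sS^2$, $\EE^2$ ou $\HH^2$. Il s'agit ensuite de relever cette structure à $M$ en une géométrie produit ou tordue selon la valeur de $e(g)$ :
\begin{center}
\begin{tabular}{c|cc}
& $e(g)=0$ & $e(g)\ne0$ \\\hline
$\chi^{\mathrm{orb}}(B)>0$ & $\sS^2\times\EE^1$ & $\sS^3$ \\
$\chi^{\mathrm{orb}}(B)=0$ & $\EE^3$ & $\Nil$ \\
$\chi^{\mathrm{orb}}(B)<0$ & $\HH^2\times\EE^1$ & $\widetilde{\SL_2(\RR)}$
\end{tabular}
\end{center}
La construction pour $e(g)=0$ est élémentaire (produit d'une structure sur un revêtement fini de $B$ avec $\EE^1$, puis passage au quotient par l'action qui code la monodromie) ; le cas $e(g)\ne 0$ demande de reconnaître $\Nil$ et $\widetilde{\SL_2(\RR)}$ comme les fibrés en droites naturels respectivement sur $\EE^2$ et $\HH^2$ ayant une courbure horizontale non nulle.

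\textbf{Principal obstacle.} Le point délicat me semble être la réalisation effective des deux géométries "tordues" $\Nil$ et $\widetilde{\SL_2(\RR)}$, car il faut identifier précisément le nombre d'Euler de la fibration de Seifert avec l'obstruction géométrique à un produit, et gérer correctement la dépendance en le choix d'un revêtement fini trivialisant la partie orbifold de $B$. Une exposition complète (à laquelle on renverrait) se trouve dans Scott~\cite{Sc83}, qui contient la classification des six géométries "de Seifert" ainsi que les arguments de descente nécessaires pour passer des revêtements finis à la variété initiale.
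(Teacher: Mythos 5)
Votre esquisse est correcte et suit exactement l'argument standard (la trichotomie sur le signe de $\chi^{\mathrm{orb}}(B)$ croisée avec la nullité ou non du nombre d'Euler $e(g)$, plus les feuilletages en cercles invariants pour la réciproque) tel qu'il est exposé dans Scott~\cite{Sc83}. Le texte ne démontre pas cette proposition et renvoie précisément à cette référence, si bien que votre démonstration coïncide avec celle que le papier sous-entend.
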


Cette proposition reste vraie pour $M$ non orientable si l'on étend la définition de variété de Seifert aux variétés portant un feuilletage en cercles (ce qui revient à admettre un modèle local non orientable en sus des modèles vus en page~\pageref{p.solid}).

\begin{cor}
Soit $M=\raisebox{-.65ex}{\ensuremath{\Lambda}}\!\backslash \Omega$ une variété géométrique, alors $M$ est soit une variété de Seifert, soit $\Omega=\Sol$, soit $\Omega=\HH^3$.
\end{cor}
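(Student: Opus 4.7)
La preuve est essentiellement un dépouillement de cas à partir des deux résultats qui précèdent immédiatement dans le texte.

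Le plan est le suivant. Premièrement, je rappelle le théorème de Thurston (voir page \pageref{p.8geom}) qui affirme qu'à équivalence près il n'existe que huit géométries de dimension $3$ admettant un groupe d'isométries maximal, à savoir
$$
\sS^3,\ \sS^2\times\EE^1,\ \EE^3,\ \Nil,\ \HH^2\times\EE^1,\ \widetilde{\SL_2(\RR)},\ \Sol,\ \HH^3.
$$
Par définition~\ref{dfn.geom}, la variété géométrique $M=\raisebox{-.65ex}{\ensuremath{\Lambda}}\!\backslash \Omega$ est donc modelée sur l'un de ces huit modèles.

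Deuxièmement, j'applique la Proposition~\ref{prop.seifert}. Pour les six premières géométries de la liste ci-dessus, la proposition affirme précisément que $M$ est une variété de Seifert lorsque $M$ est orientable, et la remarque qui suit la proposition étend cette conclusion au cas non orientable, à condition d'admettre le modèle local non orientable en fibration par cercles (cf. page~\pageref{p.solid}). Dans ces six cas, la première alternative de la conclusion est donc satisfaite.

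Troisièmement, les deux géométries restantes sont exactement $\Omega=\Sol$ et $\Omega=\HH^3$, qui fournissent les deux dernières alternatives de l'énoncé. Aucune obstruction sérieuse n'apparaît : la preuve se résume à croiser la classification de Thurston avec la caractérisation géométrique des variétés de Seifert déjà énoncée. Le seul point délicat, si l'on souhaite soigner les détails, est de bien traiter la version non orientable de la Proposition~\ref{prop.seifert} comme indiqué dans la remarque qui la suit, ce qui justifie que l'énoncé du corollaire ne suppose pas $M$ orientable.
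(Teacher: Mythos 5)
Votre démonstration est correcte et suit exactement le raisonnement que le texte laisse implicite : croiser la liste des huit géométries de Thurston (page~\pageref{p.8geom}) avec la Proposition~\ref{prop.seifert} et sa remarque pour le cas non orientable, les deux géométries restantes étant $\Sol$ et $\HH^3$. Rien à redire, si ce n'est que l'on sous-entend ici que $M$ est compacte sans bord (ou de volume fini), hypothèse ambiante dans toute cette section.
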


\subsection*{Variétés géométriques uniréglées}\label{var.uni}
 
Depuis les travaux de Koll\'ar, nous savons que modulo les variétés $\Sol$ et modulo un nombre fini de variétés de dimension $3$ fermées, les variétés réelles uniréglées orientables sont essentiellement des fibrés de Seifert ou des sommes connexes d'espaces lenticulaires. Les progrès accomplis depuis peuvent être résumés rapidement de la manière suivante. Grâce au théorème \ref{thm.vit}, nous savons qu'une variété hyperbolique ne peut être contenue dans le lieu réel d'un variété projective uniréglée non singulière. Le théorème \ref{thm.mw} nous apprend qu'il existe au plus un nombre fini de variétés $\Sol$ qui puissent être contenues dans le lieu réel d'une variété projective uniréglée non singulière. Réciproquement, le théorème~\ref{thm.hm1} nous dit que toute variété géométrique orientable qui n'est ni hyperbolique ni $\Sol$, est difféomorphe à une composante connexe du lieu réel d'une variété projective uniréglée non singulière. 

Nous avons rencontré les variétés unireglées dans l'énoncé du théorème de Koll\'ar (Définition~\ref{dfn.uni}). Nous nous intéressons maintenant à une notion intermédiaire entre rationnel et uniréglé cf.~\cite[Sections~5\&6]{Ko-simplest}.

\begin{dfn}\label{dfn.rc}
Une variété projective réelle $X$ de dimension $n$ est (géométriquement) \emph{rationnellement connexe} (r.~c.) s'il existe un ouvert non vide $U\subset X$ tel que  pour toute paire de points $x,y\in U$, il existe une courbe rationnelle $f\colon \CC\PP^1\to X$ telle que $x,y\in f(\CC\PP^1)$.
\end{dfn}

Par exemple, les hypersurfaces de degré inférieur à $n$ dans $\PP^n$ sont r.~c. et plus généralement toutes les variétés de Fano sont r.~c \cite{KMM92,Cam92}. Pour voir que r.~c. est intermédiaire entre rationnel et uniréglé, remarquons que $X$ est uniréglée si pour tout point $x\in U$ dans un ouvert fixé, il existe une courbe rationnelle $f\colon \CC\PP^1\to X$ telle que $x\in f(\CC\PP^1)$.  En paraphrasant Koll\'ar \cite{Ko-simplest}, on peut dire que la notion de variété r.~c. est actuellement considérée comme la "bonne" généralisation de la notion de variété rationnelle. 

On peut résumer l'état actuel de la classification des variétés uniréglées et rationnellement connexes en dimension $3$ par les énoncés suivants synthétisant des travaux de Catanese, Eliashberg, Givental, Hofer, Huisman, Koll\'ar, Viterbo, Welschinger et moi-même \cite{KoI,KoII,KoIII,KoIV,V99,EGH,hm1,hm2,cm1,cm2,mw1}.
 
\begin{enumerate}
\item Soit $X$ une variété projective réelle non singulière dont le lieu réel est orientable et soit $M\subset \xr$ une composante connexe.
\begin{enumerate}
\item Si $X$ est uniréglée, alors à un nombre fini d'exceptions près, $M$ est essentiellement une variété de Seifert ou une somme connexe d'espaces lenticulaires.
\item Si $M$ admet une fibration de Seifert $M\to B$ avec $B$ orientable, et si $X$ est rationnellement connexe, alors la géométrie de $M$ (cf. \ref{prop.seifert}) n'est ni $\HH^2\times \EE^1$, ni $\widetilde{\SL_2(\RR)}$. 
\end{enumerate}

\item Soit $M$ une variété de Seifert orientable ou une somme connexe d'espaces lenticulaires, alors il existe une variété projective $X$ définie sur $\RR$, uniréglée et non singulière dont le lieu réel $\xr$ contient une composante connexe difféomorphe à $M$.
\end{enumerate}

Pour énoncer ces résultats plus précisément (Théorème~\ref{thm.tout}), 
rappelons que si $M$ est une variété \emph{orientée} compacte sans bord de dimension $3$, alors il existe une décomposition $M = M'
\#^a\RR\PP^3\#^b(\sS^1\times \sS^2)$ avec $a + b$ maximal qui est unique d'après un théorème de Milnor \cite{Mi62} (voir page~\pageref{somme.connexe} pour la définition de somme connexe).

Pour une variété algébrique, être rationnelle, rationnellement connexe ou uniréglée étant invariant par équivalence birationnelle, les propriétés topologiques pertinentes de $M$ dans notre contexte sont en fait portées par $M'$ comme l'illustrent les exemples suivants.

\begin{exem}\cite[Example~1.4]{KoII}\label{ex.blow}

Soit $X$ une variété algébrique réelle de dimension 3, non singulière.
\begin{enumerate}
\item 
Soit $P\in X(\RR)$ un point réel, alors pour la composante connexe $M$ de $X(\RR)$ contenant $P$, nous obtenons (Proposition~\ref{prop.eclat})
$$
B_PM\approx M\#\RR\PP^3.
$$

\item Soit $D\subset X$ une courbe réelle possédant un unique point réel $\{0\}=D(\RR)$. Supposons de plus que proche de $0$, cette courbe soit donnée par les équations $\{z=x^2+y^2=0\}$. Notons $Y_1=B_DX$, la variété obtenue par l'éclatement de $X$ centré en $D$ (Voir Appendice \ref{anex.eclat}), cette nouvelle variété est réelle et possède un unique point singulier $P$. Considérons $Y:=B_PY_1$, la variété obtenue par l'éclatement de $Y_1$ centré en $P$ qui est une variété réelle non singulière. En notant $\pi\colon Y\to X$ la composition des éclatements, nous obtenons pour la composante connexe $M\subset X(\RR)$ contenant $P$  la relation 
$$
\pi^{-1}M\approx M\#(\sS^1\times \sS^2),
$$
c'est-à-dire
$$
B_P(B_DM)\approx M\#(\sS^1\times \sS^2).
$$

\end{enumerate}
\end{exem}

\begin{thm}\label{thm.tout} 
\begin{enumerate}
\item Soit $X$ une variété projective réelle non singulière avec $\xr$ orientable et soit $M\subset \xr$ une composante connexe.
\begin{enumerate}
\item À un nombre fini d'exceptions près, si $X$ est uniréglée alors il existe $a,b\in\NN$ et une variété $M'$ admettant une fibration de Seifert\\ $M'\to B$ ou une décomposition $M'=\#_{l=1}^c\LL_{p_l,q_l}$ tels que 
$$
M=M'
\#^a\RR\PP^3\#^b(\sS^1\times \sS^2).
$$
\item À un nombre fini d'exceptions près, si $X$ est rationnellement connexe et si $M$ admet une fibration de Seifert $M\to B$ dont l'espace d'orbites $B$ est orientable, alors $M$ porte l'une des quatre géométries 
$$
\sS^3,\ \EE^3,\ \sS^2\times \EE^1,\ \Nil.
$$
\end{enumerate}
\item Soit $M = M'
\#^a\RR\PP^3\#^b(\sS^1\times \sS^2)$ une variété compacte sans bord de dimension $3$. Si $M'$ est une variété de Seifert orientable ou une somme connexe d'espaces lenticulaires $M'=\#_{l=1}^c\LL_{p_l,q_l}$, alors il existe une variété projective $X$ définie sur $\RR$, uniréglée et non singulière telle que $M$ soit une composante connexe de $\xr$.
\end{enumerate}
\end{thm}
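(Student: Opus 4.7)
Le plan est de scinder le théorème en ses deux moitiés---les restrictions topologiques nécessaires sur $M$ (partie~1) et la construction explicite de modèles algébriques (partie~2)---et de les traiter séparément, puisqu'elles font appel à des techniques très différentes.

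Pour la partie~1(a), je prendrais le Théorème~\ref{thm.kollar} comme point de départ~: il fournit déjà cinq classes de composantes connexes candidates. L'idée est de réinterpréter cette sortie à travers l'unicité de la décomposition de Milnor $M = M'\#^a\RR\PP^3\#^b(\sS^1\times\sS^2)$~: les sommands $\RR\PP^3$ et $\sS^1\times\sS^2$ absorbent exactement le cas~(5), tandis que le reste $M'$ doit relever des cas~(1)--(\ref{kollar.except}). La seule classe de la liste de Kollár qui ne soit ni une variété de Seifert ni une somme connexe d'espaces lenticulaires est le cas~(\ref{kollar.fibrestores}) des fibrés en tores sur $\sS^1$ (plus généralement les variétés portant une géométrie $\Sol$). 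L'ingrédient-clé est alors le résultat de Mangolte--Welschinger \cite{mw1}, qui montre que seulement un nombre fini de telles variétés $\Sol$ peuvent apparaître comme composante du lieu réel d'une variété projective uniréglée lisse de dimension~$3$. En les ajoutant aux exceptions finies du cas~(\ref{kollar.except}), on obtient la partie~1(a).

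Pour la partie~1(b), j'exploiterais la contrainte plus forte de la connexité rationnelle en faisant tourner le programme des modèles minimaux réel sur $X$, produisant une fibration de Mori $\varphi\colon X'\dashrightarrow S$ au-dessus d'une base elle-même rationnellement connexe. Si $M$ admet une fibration de Seifert $M\to B$ avec $B$ orientable, alors $\varphi$ induit, après chirurgie birationnelle, une fibration sur $M'$ dont la base est (à résolution près) une surface de Du Val réelle. Le théorème de type Comessatti pour les surfaces de Du Val démontré à la section~\ref{sec.singsurf} contraint la topologie de cette base, et donc, via la Proposition~\ref{prop.seifert}, restreint la géométrie de Thurston de $M$ aux quatre possibilités $\sS^3$, $\EE^3$, $\sS^2\times\EE^1$ ou $\Nil$~: les géométries de Seifert de type hyperbolique ($\HH^2\times\EE^1$ et $\widetilde{\SL_2(\RR)}$) sont éliminées parce que leur base orbifold aurait une caractéristique d'Euler strictement négative, ce que le théorème de Comessatti singulier interdit.

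Pour la partie~2, je construirais des modèles explicites composante par composante. Pour un sommand lenticulaire $\LL_{p,q}$, j'utiliserais les constructions par quotients cycliques de Kollár présentées dans \cite{KoIV}. Pour une variété de Seifert orientable $M'$, la stratégie est de réaliser $M'$ comme composante du lieu réel d'un fibré en coniques réel $X'\to S$ au-dessus d'une surface rationnelle réelle adéquate, avec les fibres multiples produites par des dégénérescences non réelles contrôlées~; les familles pertinentes ont été construites par Huisman--Mangolte \cite{hm1,hm2} puis étendues dans \cite{cm1,cm2}. Enfin, les sommes connexes $\#^a\RR\PP^3$ et $\#^b(\sS^1\times\sS^2)$ s'obtiennent en itérant les éclatements de l'Exemple~\ref{ex.blow}~: éclater $a$ points réels fournit les facteurs $\RR\PP^3$, et éclater $b$ courbes réelles de la forme $\{z=x^2+y^2=0\}$ (suivi de la résolution du point singulier résultant) fournit les facteurs $\sS^1\times\sS^2$~; l'uniréglage est préservé par ces modifications birationnelles. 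L'obstacle principal me semble être la partie~2 pour les géométries de Seifert, à savoir produire un fibré en coniques réel dont le lieu réel a des fibres multiples, une base orbifold et des invariants de Seifert prescrits~: cela requiert un contrôle délicat de la courbe discriminante et de la structure réelle sur la base, et constitue le c{\oe}ur technique des articles \cite{hm1,cm1,mw1}. La partie~1(b) dépend également d'un ingrédient non trivial, le théorème de Comessatti singulier, dont la preuve occupe la section~\ref{sec.singsurf}.
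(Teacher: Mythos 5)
Votre démonstration est correcte et suit essentiellement la même voie que celle de l'article : réduction du point 1(a) au cas des fibrés en tores via le Théorème~\ref{thm.kollar} puis exclusion des variétés $\Sol$ par \cite{mw1} (c'est le contenu du Corollaire~\ref{cor.mw}), déduction du point 1(b) du théorème de Comessatti singulier~\ref{thm.cm}, et construction des modèles du point 2 par les Théorèmes~\ref{thm.hm1} et \ref{thm.hm2} complétés par les éclatements de l'Exemple~\ref{ex.blow}. Vous explicitez même un peu plus que le texte le rôle du MMP réel dans 1(b) et l'ajout des sommands $\RR\PP^3$ et $\sS^1\times\sS^2$ dans la partie~2, que l'article laisse implicites.
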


\begin{proof}
Pour prouver 1.a, il suffit, d'après le théorème de Koll\'ar \ref{thm.kollar}, de réfuter la seule famille infinie d'exceptions~\ref{thm.kollar}.(\ref{kollar.fibrestores}). C'est-à-dire de montrer que si $M\to \sS^1$ est un fibré  localement trivial en tores qui n'admet pas en même temps une fibration de  Seifert, alors $M$ appartient à la liste finie d'exceptions~\ref{thm.kollar}.(\ref{kollar.except}). C'est ce qui est énoncé dans le Corollaire~\ref{cor.mw}. 

Le point 1.b est une conséquence du théorème \ref{thm.cm}. 
Le point 2. est la conjonction des théorèmes~\ref{thm.hm1} et \ref{thm.hm2} ci-dessous.
\end{proof}

\begin{thm}\cite{hm1}\label{thm.hm1}
Toute variété de Seifert orientable est réalisable comme composante connexe de la partie réelle d'une 3-variété projective réelle fibrée en $\mathbb{P}^1$.
\end{thm}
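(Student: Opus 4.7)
L'approche naturelle est de construire $X$ comme un fibré en coniques $\pi\colon X\to S$ au-dessus d'une surface projective réelle rationnelle $S$, la fibre générique d'un tel fibré étant isomorphe à $\PP^1$. L'observation cruciale est que le lieu réel d'une conique lisse réelle est, s'il est non vide, difféomorphe à $\sS^1$. Par conséquent, au-dessus d'une région de $S(\RR)$ évitant le discriminant et sur laquelle toutes les fibres possèdent des points réels, la restriction de $\pi$ aux parties réelles est un fibré localement trivial en cercles. Les fibres exceptionnelles de la structure de Seifert seront obtenues en autorisant des dégénérescences contrôlées des coniques le long d'un diviseur discriminant $\Delta\subset S$.

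Le plan consisterait à procéder comme suit. On utilise d'abord la classification des variétés de Seifert orientables~: une telle variété $M\to B$ est caractérisée par son orbifold de base $B$ (la surface sous-jacente pouvant être orientable ou non), son nombre d'Euler rationnel et les invariants $(\alpha_i,\beta_i)$ en nombre fini associés aux fibres multiples. D'après la classification des surfaces rationnelles réelles rappelée en section~\ref{sec.surf}, on peut choisir une surface rationnelle réelle $S$ dont une composante connexe de $S(\RR)$ est difféomorphe à la surface sous-jacente à $B$. On construit alors sur $S$ un fibré en coniques $X$ dont la restriction à cette composante donne d'abord un fibré en cercles (cas sans fibres multiples)~; pour cela, $X$ peut être défini comme le lieu des zéros d'une section convenable d'un faisceau de formes quadratiques sur $S$, section ajustée pour que toutes les fibres au-dessus de la composante visée aient des points réels. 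On modifie ensuite $X$ au voisinage de chaque point marqué $P_i\in B$ par une suite d'éclatements (parfois centrés en des paires conjuguées complexes, pour préserver la structure réelle globalement) afin d'obtenir localement un modèle algébrique du tore solide de Seifert d'invariants $(\alpha_i,\beta_i)$. Enfin, on ajuste le nombre d'Euler rationnel par des transformations birationnelles élémentaires ne touchant pas la topologie des fibres hors des fibres distinguées.

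Le principal obstacle est la réalisation algébrique explicite d'une fibre multiple de type $(\alpha,\beta)$ arbitraire~: il s'agit de fournir, pour tout couple d'entiers premiers entre eux $(\alpha,\beta)$, une chaîne d'éclatements et de contractions réelles reproduisant, au niveau du lieu réel, la chirurgie standard qui crée une telle fibre multiple dans la fibration de Seifert. Cette traduction entre la combinatoire des invariants de Seifert et les données algébriques (types de dégénérescence des coniques, configuration du discriminant $\Delta$, centres des éclatements et de leurs conjugués) constitue le c\oe ur technique du théorème.
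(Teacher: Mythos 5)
Votre architecture générale --- réaliser algébriquement la surface sous-jacente à l'orbifold de base $B$, construire au-dessus une fibration en $\PP^1$ dont la trace réelle fibre en cercles, créer les fibres multiples par une modification locale, puis ajuster le nombre d'Euler par transformations élémentaires --- est bien celle de \cite{hm1}. Mais deux étapes de votre plan ne tiennent pas. La première est le choix d'une base \emph{rationnelle} $S$. Si la surface sous-jacente à $B$ est orientable de genre $g\geq 2$ (par exemple pour $M=\sS^1\times\Sigma_2$), le théorème de Comessatti~\ref{thm.co} --- et la classification de la section~\ref{sec.surf}, valable jusqu'aux surfaces uniréglées --- interdit qu'une composante de $S(\RR)$ soit difféomorphe à $\vert B\vert$. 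Pire : un fibré en coniques au-dessus d'une surface rationnelle est rationnellement connexe, et le point 1.b du Théorème~\ref{thm.tout} ainsi que le Corollaire~\ref{cor.orbi} montrent qu'alors $M$ ne peut porter que l'une des géométries $\sS^3$, $\EE^3$, $\sS^2\times\EE^1$, $\Nil$, avec au plus quatre fibres multiples vérifiant $\sum_i(1-\frac{1}{\mu_i+1})\leq 2$ : votre construction ne peut donc atteindre aucune variété de Seifert à orbifold de base hyperbolique. Comme le théorème ne demande que "fibrée en $\PP^1$", donc uniréglée, il faut prendre pour $S$ une surface réelle quelconque réalisant $\vert B\vert$ (elle existe par Nash--Tognoli), sans aucune hypothèse de rationalité.

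La seconde lacune est le mécanisme proposé pour les fibres multiples. Le lieu réel d'une conique dégénérée est un point (deux droites conjuguées), un bouquet de deux cercles (deux droites réelles) ou un seul cercle (droite double) : jamais une fibre circulaire au-dessus de laquelle la fibration locale est le modèle $(u,z)\mapsto u^qz^p$ de la page~\pageref{p.solid}. Des dégénérescences le long du discriminant détruisent donc la structure de fibré en cercles au lieu de produire une fibre multiple, et des éclatements de points ou de courbes réels ne font qu'ajouter des facteurs $\RR\PP^3$ ou $\sS^1\times\sS^2$ en somme connexe (Exemple~\ref{ex.blow}). Le modèle local de Seifert étant par définition un quotient cyclique $(\sS^1\times\DD^2)/\mu_p\to\DD^2/\mu_p$, sa réalisation algébrique dans \cite{hm1} passe par des singularités quotients cycliques (de type Du Val $A_\mu$) de la surface de base, au-dessus desquelles une fibration en $\PP^1$ convenable a pour trace réelle exactement le tore solide de Seifert ; c'est cette correspondance entre multiplicités des fibres de Seifert et singularités $A_\mu$ de la base qu'exploite ensuite en sens inverse la section~\ref{sec.singsurf}. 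Vous identifiez ce point comme "le c{\oe}ur technique" du théorème, mais vous le laissez entièrement ouvert.
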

 
 \begin{thm}\cite{hm2}\label{thm.hm2}
Toute somme connexe d'espaces lenticulaires $\#_{l=1}^c\LL_{p_l,q_l}$ est réalisable comme composante connexe de la partie réelle d'une 3-variété projective réelle fibrée en $\mathbb{P}^1$.
\end{thm}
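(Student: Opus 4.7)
Le plan est de procéder par récurrence sur $c$, en combinant le Théorème~\ref{thm.hm1} avec des opérations de somme connexe algébrique inspirées de l'Exemple~\ref{ex.blow}. Le cas $c=0$ (c'est-à-dire $\sS^3$) est trivial; le cas $c=1$ est couvert par le Théorème~\ref{thm.hm1} puisque chaque espace lenticulaire $\LL_{p,q}$ est une variété de Seifert orientable, admettant une fibration de Seifert au-dessus de $\sS^2$ avec au plus deux fibres exceptionnelles (cf.~page~\pageref{p.var.seifert}).

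Pour l'étape inductive, supposons réalisée $M=\#_{l=1}^{c-1}\LL_{p_l,q_l}$ comme composante connexe du lieu réel d'une variété $X$ projective réelle non singulière munie d'une fibration $\pi\colon X\to S$ en $\PP^1$. Pour lui greffer $\LL_{p_c,q_c}$ par somme connexe, j'adapterais les recettes birationnelles de l'Exemple~\ref{ex.blow}. Dans le cas simple $\LL_{2,1}=\RR\PP^3$, il suffit d'éclater un point réel $P\in M$ situé dans une fibre lisse de $\pi$; on prolonge la fibration en $\PP^1$ en éclatant simultanément le point $\pi(P)\in S$. Pour un $\LL_{p,q}$ général, je centrerais l'éclatement en une courbe réelle $D\subset X$ dont la partie réelle consiste en un unique point $P$ et dont la singularité (généralisant $\{z=x^2+y^2=0\}$) est choisie de sorte que la résolution birationnelle de $D$ produise, sur le lieu réel, le lens space $\LL_{p,q}$ greffé à $M$ en $P$ par somme connexe.

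Concrètement, au voisinage de $P$, $X$ est $\PP^1$-fibrée, et l'on peut itérer le procédé éclatement-résolution de l'Exemple~\ref{ex.blow}(2) en ajustant les paramètres locaux de la courbe pour que le diviseur exceptionnel final porte la topologie d'un $\LL_{p,q}$ plongé de manière adéquate. Une approche alternative passe par un lien de Sarkisov: partir d'un fibré en coniques sur une surface rationnelle $S$ réalisant déjà $M$, puis modifier $S$ par une suite d'éclatements et contractions pour lui ajouter localement la structure fibrée de Seifert modèle de $\LL_{p,q}$.

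La difficulté principale sera de préserver simultanément trois propriétés au long de ces modifications birationnelles: (i) la projectivité de la variété totale, (ii) l'existence d'une fibration globale en $\PP^1$, et (iii) la topologie précise du lieu réel, en particulier l'identification du diviseur exceptionnel réel avec un $\LL_{p_c,q_c}$ recollé par somme connexe à $M$. Le contrôle de (i) exige un suivi précis des classes amples sous chaque transformation; celui de (iii) requiert une analyse fine de l'action de la conjugaison complexe sur les fibres dégénérées ajoutées. Ces vérifications, techniques mais explicites, se ramènent à des calculs sur des cartes affines appropriées de $X$ au voisinage de $P$.
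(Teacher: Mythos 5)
Le théorème est cité de \cite{hm2} sans démonstration dans ce texte; je compare donc votre esquisse à la construction de l'article source. Votre cas de base est correct, mais l'étape d'induction contient la véritable lacune : vous supposez qu'un facteur $\LL_{p_c,q_c}$ avec $p_c\geq 3$ peut être greffé par somme connexe sur le lieu réel en éclatant une courbe réelle singulière à un seul point réel puis en résolvant, « en ajustant les paramètres locaux ». Aucun centre candidat n'est exhibé, et le texte lui-même fournit de fortes raisons de penser qu'il n'en existe pas : le diviseur exceptionnel d'un éclatement à centre lisse est un fibré projectif, dont le lieu réel est un fibré en $\RR\PP^k$; les deux calculs de l'Exemple~\ref{ex.blow} produisent exactement les facteurs $\RR\PP^3$ et $\sS^1\times\sS^2$ et rien d'autre; et le Théorème~\ref{thm.mmp}, joint à la décomposition de Milnor $M=M'\#^a\RR\PP^3\#^b(\sS^1\times\sS^2)$, dit précisément que les modifications birationnelles du MMP ne créent ou ne détruisent que des facteurs $\RR\PP^3$ et $\sS^1\times\sS^2$ : les facteurs lenticulaires avec $p\geq 3$ vivent dans $M'$, la partie que le texte qualifie de « pertinente », qui doit déjà être présente au niveau du fibré de Mori. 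Votre récurrence tente donc de fabriquer par chirurgie birationnelle exactement la partie de la topologie que la chirurgie birationnelle ne peut pas atteindre. (Problème secondaire : l'éclatement d'un point de l'espace total détruit la fibration en $\PP^1$ le long de la fibre passant par ce point, et éclater son image dans la base ne restitue pas une fibration en $\PP^1$.)

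La preuve de \cite{hm2} procède en sens inverse : c'est une construction globale et directe d'une variété de dimension~$3$ fibrée en $\PP^1$ au-dessus d'une surface réelle, dans laquelle les $c$ facteurs lenticulaires sont produits simultanément par des dégénérescences prescrites de la fibration au-dessus de points réels isolés — au voisinage de chacun, la fibration en cercles du lieu réel dégénère de sorte qu'un voisinage de la fibre spéciale soit $\LL_{p_l,q_l}$ privé d'une boule, la structure de somme connexe provenant du recollement de ces modèles locaux le long de sphères. Le Corollaire~\ref{cor.orbi} fournit d'ailleurs un test de cohérence qui condamne votre stratégie : au-dessus d'une base géométriquement rationnelle, on a $k(M)\leq 4$ et $\sum(1-\frac{1}{\mu_i+1})\leq 2$; pour réaliser une somme connexe arbitraire il faut donc quitter le monde des bases rationnelles, ce qu'une suite d'éclatements partant des modèles de \ref{thm.hm1} ne fera jamais.
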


Ces deux théorèmes prouvent la conjecture de Koll\'ar \cite[Conjecture~6.7.(2)]{Ko01}.

\subsubsection*{Programme du modèle minimal sur $\RR$}
L'un des résultats les plus difficiles pour la preuve du théorème \ref{thm.kollar} est un contrôle de la transformation de la topologie à travers le programme du modèle minimal (MMP) sur $\RR$. C'est le théorème ci-dessous qui permet de réduire la question topologique pour les variétés uniréglées aux fibrés de Mori en dimension~$3$ : les fibrés en coniques sur une surface, les fibrations en surfaces rationnelles sur une courbe, et les $3$-variétés de Fano.

\begin{thm}\cite{KoII}\label{thm.mmp}
Soit $X$ une variété algébrique réelle de dimension 3, projective, non singulière et $X^*$ le résultat du MMP sur $\RR$. Supposons le lieu réel $X(\RR)$ orientable.

Alors la normalisation topologique $\overline{X^*(\RR)}\to X^*(\RR)$ (cf. Définition~\ref{dfn.norm}) est une variété linéaire par morceaux et toute composante connexe $L\subset X(\RR)$ s'obtient à partir de $\overline{X^*(\RR)}$ par sommes connexes de composantes de $\overline{X^*(\RR)}$, de copies de $\RR\PP^3$ et de copies de $\sS^2\times \sS^1$.
\end{thm}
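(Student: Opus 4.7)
Le plan consiste à procéder par récurrence sur les étapes du MMP sur $\RR$. Notons
$$
X = X_0 \dashrightarrow X_1 \dashrightarrow \cdots \dashrightarrow X_n = X^*
$$
où chaque flèche est une contraction divisorielle ou un flip défini sur $\RR$ (la dernière étape pouvant aboutir à une fibration de Mori). On veut montrer à chaque étape que la normalisation topologique $\overline{X_i(\RR)}$ demeure une variété linéaire par morceaux et que les composantes connexes de $X_{i-1}(\RR)$ affectées par la transformation diffèrent des composantes correspondantes de $X_i(\RR)$ par une somme connexe avec un nombre fini de copies de $\RR\PP^3$ ou de $\sS^2\times\sS^1$.

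La première étape consiste à classifier les contractions extrémales élémentaires sur $\RR$ en dimension $3$, en adaptant la classification de Mori au cadre réel. Sur une variété terminale réelle, la liste inclut les contractions divisorielles d'un diviseur réglé réel (vers un point ou vers une courbe) ainsi que les flips. L'Exemple~\ref{ex.blow} illustre déjà les deux cas fondamentaux en sens inverse : l'éclatement d'un point réel lisse introduit un facteur $\RR\PP^3$ dans la somme connexe, tandis que l'éclatement convenable d'une courbe du type $\{z=x^2+y^2=0\}$ introduit un facteur $\sS^2\times\sS^1$. En lisant ces constructions à l'envers à travers le MMP, on observe précisément que les composantes concernées du lieu réel s'obtiennent par sommes connexes avec ces sommands. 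Les contractions dont le centre n'est pas globalement réel (donc vient en paire conjuguée disjointe du lieu réel) n'affectent pas $X_i(\RR)$.

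Pour les contractions produisant des singularités terminales réelles, je ferais une analyse locale explicite en coordonnées afin de décrire le type topologique du voisinage de chaque point singulier dans $X^*(\RR)$. L'hypothèse d'orientabilité sur $X(\RR)$ joue un rôle central : elle restreint la liste des singularités terminales apparaissant le long du MMP à celles dont les liens réels sont soit une sphère $\sS^2$ (auquel cas la singularité est un \emph{point-pince} topologique et sa normalisation topologique est localement une variété PL), soit une variété déjà lisse. Ce contrôle est l'ingrédient principal qui garantit que $\overline{X^*(\RR)}\to X^*(\RR)$ est bien une variété linéaire par morceaux; il faut aussi vérifier au passage que l'orientabilité est préservée par chaque étape du MMP, ce qui exclut en particulier certaines singularités cycliques non triviales.

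La difficulté principale est le traitement des \emph{flips} sur $\RR$. Contrairement aux contractions divisorielles, un flip modifie la topologie du lieu réel de manière plus subtile car la contraction intermédiaire crée des singularités dont les normalisations topologiques avant et après flip ne sont pas évidentes à comparer. Il faut donc dresser une liste exhaustive des flips extrémaux admissibles sur $\RR$ pour les variétés terminales 3-dimensionnelles à lieu réel orientable, puis vérifier pour chacun d'eux que son effet sur $\overline{X_i(\RR)}$ se limite à une somme connexe du type annoncé --- ou à un isomorphisme lorsque la courbe flipée n'est pas entièrement réelle. Un soin particulier doit être apporté au suivi des composantes connexes non affectées afin de recoller l'ensemble en une description globale du type topologique de chaque composante de $X(\RR)$ en fonction de celles de $\overline{X^*(\RR)}$ et des sommes connexes introduites le long du MMP.
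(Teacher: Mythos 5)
L'article que vous commentez ne démontre pas ce théorème : il est énoncé avec un simple renvoi à \cite{KoII} et aucune preuve n'en est donnée dans le texte. Votre plan reproduit la stratégie effectivement suivie par Koll\'ar dans cette référence --- récurrence sur les étapes du MMP sur $\RR$, analyse locale des contractions extrémales et des flips, rôle de l'orientabilité pour contraindre les singularités terminales réelles, et l'Exemple~\ref{ex.blow} comme modèles des sommands $\RR\PP^3$ et $\sS^1\times\sS^2$ --- de sorte que l'architecture est la bonne.

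En revanche, en tant que preuve, votre texte est un programme dont toutes les étapes substantielles restent à faire, et deux points constituent de véritables lacunes. D'abord, lire l'Exemple~\ref{ex.blow} "à l'envers" ne couvre que la première contraction, partant d'un $X$ lisse ; dès la deuxième étape, $X_i$ est en général singulière et les contractions divisorielles comme les flips ne sont plus les inverses d'éclatements de centres lisses. Tout le contenu technique de \cite{KoII} consiste précisément à classifier les singularités terminales réelles de dimension $3$ compatibles avec un lieu réel orientable (essentiellement des points de type $cA$, d'équation locale $x^2+y^2\pm f(z,t)=0$), à décrire leurs liens réels, à vérifier que la normalisation topologique y est localement PL, puis à contrôler l'effet de chaque contraction et de chaque flip sur $\overline{X_i(\RR)}$ ; vous annoncez ces analyses ("je ferais une analyse locale explicite", "il faut dresser une liste exhaustive des flips") sans les effectuer, alors que c'est là que réside toute la difficulté. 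Ensuite, la préservation de l'orientabilité le long du MMP, que vous proposez de "vérifier au passage", n'est pas une formalité : c'est elle qui exclut les quotients cycliques non triviaux et rend la liste des singularités exploitable. En l'état, la proposition identifie correctement le squelette de la démonstration mais n'en contient aucun des arguments.
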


\subsubsection*{Théorie symplectique des champs, variétés $\HH^3$ et $\Sol$}
Les théorèmes \ref{thm.vit} concernant $\HH^3$ et \ref{thm.mw} concernant $\Sol$ utilisent la théorie symplectique des champs (SFT) qui par ses méthodes nous emmènerait assez loin de la géométrie algébrique. Par manque de place (et de courage), nous avons renoncé à présenter cet important outil et nous renvoyons le lecteur intéressé aux articles concernés \cite[1.7.5]{EGH} et \cite{mw1}.

\subsubsection*[Suspension d'un difféomorphisme du tore]{Suspension\protect\footnote{Construction appelée aussi "mapping torus".} d'un difféomorphisme du tore}

Notons $z$ une coordonnée sur le cercle 
$\sS^1:=\{\vert z\vert=1\}\subset\CC$, et  $(u,v)$ des coordonnées sur le tore $\sS^1\times\sS^1:=\{\vert u\vert=1,\ \vert v\vert=1\}\subset\CC \times \CC$. Alors $\GLz$ agit sur $\sS^1\times\sS^1$ par 
$$
\left(\begin{matrix}
a&b\\
c&d
\end{matrix}\right) \longmapsto [(u,v) \mapsto (u^av^b,u^cv^d)]
$$

Pour $A\in\GLz$, posons 
$$
M:=\left(S^1\times S^1\right) \times [0,1]/((u,v),0)= (A \cdot(u,v),1).
$$

L'application $\rho\colon M\to \sS^1=[0,1]/(0=1)$ est alors un fibré localement trivial en tores. 
L'espace total $M$ de ce fibré est géométrique, sa géométrie dépend du choix de $A$. Soit $\lambda$ une valeur propre de $A$, on a la trilogie:

\begin{enumerate}
\item  si $\vert \lambda \vert = 1$ et $A$ est périodique, alors $M = \raisebox{-.65ex}{\ensuremath{\Lambda}}\!\backslash \EE^3$,

\item  si $\vert \lambda \vert = 1$ et $A$ est non périodique, alors $M = \raisebox{-.65ex}{\ensuremath{\Lambda}}\!\backslash \Nil$,

\item si $\vert \lambda \vert \ne 1$ (i.e. $A$ hyperbolique), alors  $M = \raisebox{-.65ex}{\ensuremath{\Lambda}}\!\backslash \Sol$. 
\end{enumerate}

Remarquons que dans les deux premiers cas, c'est-à-dire lorsque $M$ porte une géométrie euclidienne ou $\Nil$, $M$ admet aussi une fibration de Seifert, voir Proposition~\ref{prop.seifert}. On en déduit qu'un fibré en tores au-dessus du cercle est soit une variété de Seifert soit une variété $\Sol$.

\subsubsection*{Variétés $\Sol$}\label{p.sol}

Le groupe de Lie $\Sol$ est l'ensemble $\RR^3$ muni du produit semi-direct induit par l'action
$$
\RR\times\RR^2\to \RR^2,\ \left(z,(x,y)\right)  \mapsto \left(e^z x,e^{-z} y \right).
$$

La loi de groupe sur $\RR^3$ est
$$
\left((\alpha,\beta,\lambda),(x,y,z)\right)  \mapsto \left(e^\lambda x + \alpha,e^{-\lambda} y +\beta,z+\lambda\right) 
$$ et la métrique
$$
ds^2=e^{-2z} dx^2 + e^{2z} dy^2+dz^2
$$ 
est invariante à gauche. Le groupe $\Isom(\Sol)$ possède huit composantes connexes et sa composante neutre est $\Sol$ lui-même, cf. \cite[Lemme~3.2]{Tr98}.
Une variété $M$ de dimension $3$ est une \emph{variété $\Sol$} s'il existe un sous-groupe discret d'isométries $\Lambda\subset \Isom(\Sol)$ agissant sans point fixe tel que
$$
M=\raisebox{-.65ex}{\ensuremath{\Lambda}}\!\backslash \Sol
$$
\begin{prop}[Classification des variétés $\Sol$ fermées]

Soit $M$ une variété $\Sol$ compacte sans bord alors l'une des deux assertions suivantes est vérifiée
\begin{enumerate}
\item $M$ est la suspension d'un difféomorphisme hyperbolique.
\item $M$ est un saphir, c'est-à-dire un $\ZZ/2$-quotient d'une variété du type~1.
\end{enumerate}
\end{prop}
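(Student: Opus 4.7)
Mon approche consiste à exploiter la structure de produit semi-direct $\Sol\cong\RR^2\rtimes\RR$ via la projection $\pi\colon\Sol\to\RR$, $(x,y,z)\mapsto z$, dont le noyau $N=\{(x,y,0)\}\cong\RR^2$ est distingué dans $\Sol$. Je commencerais par traiter le cas $\Lambda\subset\Sol$, qui fournit le type~1, puis je me ramènerais au cas général en examinant $\Lambda_0=\Lambda\cap\Sol$.

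\emph{Premier cas} ($\Lambda\subset\Sol$). La compacité de $M=\Lambda\backslash\Sol$ impose que $\Lambda_N:=\Lambda\cap N$ soit un réseau de $\RR^2$ et que $\pi(\Lambda)\subset\RR$ soit un sous-groupe discret non trivial, donc de la forme $\lambda\ZZ$ avec $\lambda\ne 0$. On en tire la suite exacte
$$
1\to\ZZ^2\to\Lambda\to\ZZ\to 1
$$
qui présente $M$ comme un fibré en tores $T^2=N/\Lambda_N$ sur le cercle $\RR/(\lambda\ZZ)$. La monodromie est l'action par conjugaison d'un relèvement $(0,0,\lambda)$ sur la fibre; par définition de $\Sol$, c'est la matrice $\mathrm{diag}(e^\lambda,e^{-\lambda})$ dans la base canonique de $N$, qui est hyperbolique puisque $\lambda\ne 0$. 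Comme elle préserve $\Lambda_N$, elle correspond à un élément hyperbolique de $\GLz$ après identification $\Lambda_N\cong\ZZ^2$, d'où le type~1.

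\emph{Cas général.} Pour $\Lambda\subset\Isom(\Sol)$ quelconque, je poserais $\Lambda_0=\Lambda\cap\Sol$, sous-groupe distingué de $\Lambda$. Le quotient $F=\Lambda/\Lambda_0$ se plonge dans le groupe fini $\Isom(\Sol)/\Sol$ d'ordre $8$; $\Lambda_0$ reste donc discret et cocompact dans $\Sol$. D'après la première étape, $M_0=\Lambda_0\backslash\Sol$ est du type~1 et $M=F\backslash M_0$.

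L'obstacle principal sera d'établir que $|F|\le 2$. L'argument devrait reposer sur le fait que le feuilletage horizontal $\{z=\mathrm{cste}\}$ est canoniquement déterminé par la géométrie de $\Sol$ (on peut par exemple le caractériser comme le feuilletage par les feuilles plates, les autres feuilles totalement géodésiques étant à courbure strictement négative): toute isométrie doit donc soit le préserver, soit renverser la coordonnée $z$. En combinant cette contrainte avec la liberté de l'action et l'analyse des huit composantes connexes de $\Isom(\Sol)$ fournie dans \cite{Tr98}, on s'attend à ne conserver qu'une unique involution non triviale à conjugaison près, essentiellement $(x,y,z)\mapsto(y,x,-z)$. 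Ceci livre $|F|\in\{1,2\}$, et le cas $|F|=2$ correspond exactement à la construction du saphir.
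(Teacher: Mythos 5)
L'article énonce cette proposition sans démonstration (c'est la classification classique des variétés $\Sol$ fermées, voir par exemple \cite{Sc83}); je compare donc votre tentative à l'argument standard. Votre premier cas suit bien cet argument, mais deux points y sont affirmés sans justification : le fait que $\Lambda\cap N$ soit un réseau de $N\cong\RR^2$ (intersection d'un réseau d'un groupe de Lie résoluble avec le nilradical; cela ne découle pas immédiatement de la compacité et demande un argument, par exemple via $[\Lambda,\Lambda]\subset N$ et l'hyperbolicité de la conjugaison) et la discrétude de $\pi(\Lambda)$.

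C'est le cas général qui contient le défaut sérieux : le sous-groupe distingué pertinent n'est pas $\Lambda_0=\Lambda\cap\Sol$ mais le noyau $\Lambda^+$ du caractère $\varepsilon\colon\Lambda\to\ZZ/2$ détectant le renversement de la coordonnée $z$ (toute isométrie de $\Sol$ préserve le feuilletage par les orbites du nilradical et induit $z\mapsto\pm z+c$). En effet, la suspension d'une matrice hyperbolique de $\GLz$ à valeurs propres négatives ou de déterminant $-1$ est une variété $\Sol$ de type~1 dont le réseau n'est \emph{pas} contenu dans $\Sol$ : pour réaliser une telle monodromie il faut composer $\mathrm{diag}(e^\lambda,e^{-\lambda})$ avec une réflexion $(x,y)\mapsto(\pm x,\pm y)$. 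Votre premier cas ne couvre donc pas toutes les variétés de type~1, l'équivalence annoncée entre $|F|=2$ et le cas du saphir est fausse, et la borne $|F|\le 2$ n'est ni établie ni correcte en général (pour un saphir non orientable, l'image dans le groupe des huit composantes peut être d'ordre~$4$). La démonstration correcte consiste à montrer que $\Lambda^+\backslash\Sol$ est toujours une suspension hyperbolique — ce qui oblige à refaire votre premier cas pour un réseau préservant l'orientation de $z$ mais non contenu dans $\Sol$, en excluant notamment la possibilité d'une fibre bouteille de Klein — puis à observer que $M$ en est le quotient par $\Lambda/\Lambda^+$, groupe d'ordre $1$ ou $2$.
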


\begin{thm}[\cite{mw1}]\label{thm.mw}
Une variété $\Sol$ fermée orientable ne peut se plonger comme composante connexe du lieu réel d'une variété projective non singulière de dimension $3$ admettant une fibration à fibres rationnelles au-dessus d'une courbe.
\end{thm}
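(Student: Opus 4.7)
The strategy is to combine the geometric rigidity of the Sol structure on a hypothetical component $M\subset X(\RR)$ with the abundance of holomorphic curves in $X(\CC)$ produced by the rational fibration $f\colon X\to C$, and to extract a contradiction by symplectic field theory in the spirit of Viterbo's treatment of the hyperbolic case (Theorem~\ref{thm.vit}).

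First I would set up the symplectic picture. Fix a projective embedding of $X$; then the induced Kähler form makes $X(\CC)$ symplectic and $X(\RR)$, hence the connected component $M$, a closed orientable Lagrangian submanifold. The algebraic fibration $f$ restricts to $f_\RR\colon X(\RR)\to C(\RR)$, and connectedness of $M$ forces $f(M)$ to be a single connected component of $C(\RR)$, i.e.\ a point or a circle~$\sS^1$. Since the generic algebraic fiber of $f$ is a rational surface, Comessatti's theorem together with the orientability of $M$ excludes non-orientable pieces and forces the generic real fibers of $f_\RR|_M$ to be spheres or tori. Matching this with the Sol presentation of $M$ as a torus bundle over $\sS^1$ with hyperbolic monodromy $A\in\SLz$ singles out the torus case, and one must show that the algebraic monodromy of $f_\RR|_M$ around $\sS^1$ acts on the first homology of the torus fiber as the same hyperbolic matrix~$A$.

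The heart of the argument is then SFT-theoretic. The existence of a rational fibration on $X$ makes $X$ strongly uniruled and produces a covering family of rational curves; this forces non-vanishing of certain relative Gromov-Witten or linearized contact homology invariants associated to a Weinstein neighborhood of the Lagrangian $M$, exactly as in the mechanism underpinning Theorem~\ref{thm.vit}. On the other side of the comparison, the Reeb dynamics on the ideal contact boundary of such a neighborhood is governed by the geodesic flow of $M$, and thus, for a Sol manifold, by a flow whose closed orbits in each free homotopy class of $\pi_1(M)$ can be enumerated in terms of the action of the hyperbolic monodromy $A$ on $\pi_1$ of the torus fiber. One shows that the growth and structure of these Reeb orbits are incompatible with the pseudoholomorphic curves forced by the rational fibration, yielding the desired contradiction.

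The main obstacle, and the reason the argument is pushed into symplectic field theory, is precisely this comparison on $M$: unlike a hyperbolic manifold, a Sol manifold carries no metric of strictly negative sectional curvature, so Viterbo's theorem cannot be applied as a black box, and one must exploit the finer structure of Sol manifolds as torus bundles with Anosov monodromy. Carrying this out rigorously in the Eliashberg-Givental-Hofer framework is the technical core of the proof in \cite{mw1} and, as noted in the text, lies well beyond the algebro-geometric techniques used elsewhere in this article.
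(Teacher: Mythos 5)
Le texte de l'article ne contient aucune démonstration de cet énoncé : il est cité de \cite{mw1}, et l'auteur déclare explicitement renoncer à présenter l'argument de théorie symplectique des champs. Il n'y a donc pas de preuve interne à laquelle comparer la vôtre ; la seule indication donnée est que la démonstration repose sur la SFT, ce avec quoi votre plan est cohérent (voisinage de Weinstein du lagrangien $M\subset X(\CC)$, courbes rationnelles fournies par la fibration, dynamique de Reeb gouvernée par le flot géodésique de la variété $\Sol$).

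Cela dit, votre proposition est une stratégie, pas une démonstration, et les deux étapes qui portent tout le contenu mathématique sont affirmées plutôt qu'établies. D'abord, le fait que la monodromie de $f_\RR|_M\to\sS^1$ agisse sur $H_1$ du tore fibre comme la matrice hyperbolique $A$ de la structure $\Sol$ demande un argument (la fibration algébrique n'a aucune raison a priori de coïncider avec la fibration en tores de la structure géométrique, et il faut aussi traiter le cas des saphirs, qui ne sont pas des fibrés en tores). Ensuite et surtout, l'incompatibilité annoncée entre \emph{la croissance et la structure de ces orbites de Reeb} et les courbes pseudo-holomorphes forcées par la fibration rationnelle \emph{est} précisément le théorème : comme vous le notez vous-même, le mécanisme de Viterbo ne se transporte pas puisque $\Sol$ n'admet pas de métrique à courbure strictement négative, et le fibré cotangent d'une variété $\Sol$ possède bel et bien des orbites de Reeb fermées dans de nombreuses classes d'homotopie libre (celles provenant du tore fibre, par exemple). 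Il faut donc identifier exactement quelles classes les courbes issues de l'uniréglage sont contraintes de rencontrer et pourquoi la monodromie de type Anosov les exclut ; sans cette identification, vous avez ramené le théorème à lui-même. Renvoyer, comme le fait l'article, le c\oe ur technique à \cite{mw1} est une description honnête de la situation, mais ce n'est pas une preuve.
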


Ce théorème prouve essentiellement la conjecture de Koll\'ar \cite[Conjecture~6.7.(1)]{Ko01}.
\begin{cor}\label{cor.mw}
Si une composante connexe orientable $M$ du lieu réel d'une variété projective réelle non singulière de dimension $3$ uniréglée est une variété $\Sol$, alors $X$ est birationnelle à une variété de Fano réelle $Y$ telle que $Y(\RR)$ contient une composante connexe homéomorphe à $M$.
En particulier, il existe au plus un nombre fini de telles variétés $\Sol$ uniréglées (et conjecturalement aucune !).
\end{cor}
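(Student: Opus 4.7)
Le plan est d'appliquer le programme du modèle minimal sur $\RR$ (théorème \ref{thm.mmp}) pour ramener $X$ à un fibré de Mori $X^{*}$, puis d'éliminer à tour de rôle chaque type de fibré de Mori à l'exception du cas Fano. Première étape : comme $X$ est uniréglée, projective et lisse, le MMP réel produit un fibré de Mori $X^{*}$ de dimension $3$. Le théorème~\ref{thm.mmp} assure alors que la composante connexe $M\subset X(\RR)$, supposée être une variété $\Sol$ fermée orientable, s'obtient à partir de la normalisation topologique $\overline{X^{*}(\RR)}$ par sommes connexes de composantes de $\overline{X^{*}(\RR)}$, de copies de $\RR\PP^3$ et de copies de $\sS^1\times\sS^2$. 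Or une variété $\Sol$ close orientable est asphérique à groupe fondamental polycyclique infini, donc première au sens de Milnor, et distincte à la fois de $\RR\PP^3$ et de $\sS^1\times\sS^2$. L'unicité de la décomposition de Milnor en facteurs premiers force ainsi $M$ à être homéomorphe à une unique composante connexe de $\overline{X^{*}(\RR)}$, sans aucun facteur $\RR\PP^3$ ou $\sS^1\times\sS^2$ supplémentaire.

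Deuxième étape : on examine les trois types possibles pour $X^{*}$. Si $X^{*}$ est un fibré en coniques au-dessus d'une surface, la structure de Mori induit, par un argument dû à Kollár \cite{KoI,KoII}, une fibration de Seifert sur chaque composante connexe orientable du lieu réel ; or d'après la proposition~\ref{prop.seifert}, les variétés de Seifert orientables portent l'une des six géométries $\sS^3,\ \sS^2\times\EE^1,\ \EE^3,\ \Nil,\ \HH^2\times\EE^1,\ \widetilde{\SL_2(\RR)}$, toutes distinctes de $\Sol$ : ce cas est donc exclu. Si $X^{*}$ est une fibration à fibres rationnelles au-dessus d'une courbe, le théorème~\ref{thm.mw} interdit précisément qu'une variété $\Sol$ close orientable en apparaisse comme composante connexe du lieu réel : ce cas est également exclu. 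Il ne reste donc que la possibilité $X^{*}=Y$ variété de Fano réelle (éventuellement à singularités terminales $\QQ$-factorielles), et $M$ est homéomorphe à une composante connexe de $Y(\RR)$. Puisque le MMP ne procède que par transformations birationnelles, $X$ est bien birationnelle à $Y$, ce qui prouve la première assertion du corollaire.

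Troisième étape, finitude : les variétés de Fano de dimension $3$ à singularités terminales ne forment qu'un nombre fini de familles de déformation (bornage des variétés de Fano terminales de dimension $3$, cf. Kawamata et Kollár-Miyaoka-Mori) ; sur chaque famille complexe, le type d'homéomorphisme du lieu réel est localement constant au voisinage de toute structure réelle, et l'ensemble des structures réelles modulo conjugaison reste discret. On en déduit qu'il n'existe qu'un nombre fini de types topologiques possibles pour $Y(\RR)$, donc un nombre fini de variétés $\Sol$ pouvant apparaître.

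Le point le plus délicat me semble être l'exclusion du cas fibré en coniques : il faut en effet mobiliser le résultat technique, dû à Kollár dans la série \cite{KoI,KoII,KoIII,KoIV}, selon lequel la structure de Mori se transporte au lieu réel pour donner une fibration de Seifert sur les composantes orientables. Cet argument repose sur une analyse fine des fibres réelles singulières d'un fibré en coniques et constitue déjà l'une des briques du théorème~\ref{thm.kollar}. Une fois ce point admis, la combinaison avec les théorèmes \ref{thm.mmp} et \ref{thm.mw} rend la preuve essentiellement formelle.
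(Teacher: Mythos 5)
Votre preuve est correcte et suit essentiellement le même chemin que celle de l'article : réduction par le MMP réel (Théorème~\ref{thm.mmp}) à un fibré de Mori, puis analyse des trois cas, le cas des fibrés en coniques étant exclu par Koll\'ar (l'article cite \cite{KoIII}), le cas des fibrations en surfaces rationnelles sur une courbe par le Théorème~\ref{thm.mw}, et la finitude dans le cas Fano par la borne de Kawamata. Vos ajouts (primalité des variétés $\Sol$ dans la décomposition de Milnor, constance par déformation du type topologique du lieu réel) ne font qu'expliciter des points que l'article laisse implicites.
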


\begin{proof}
D'après le Théorème~\ref{thm.mmp}, $X$ est birationnel à une variété réelle $Y$ telle que $Y(\RR)$ contient une composante connexe homéomorphe à $M$ et $Y$ est l'un des trois fibrés de Mori suivant.
\begin{enumerate}
\item Une fibration en coniques au-dessus d'une surface, Koll\'ar a montré dans \cite{KoIII} que $M$ ne peut être $\Sol$;
\item une fibration à fibres rationnelles au-dessus d'une courbe, le Théorème \ref{thm.mw} affirme que $M$ ne peut être $\Sol$;
\item une variété de Fano à singularités terminales et l'on sait \cite{Kawa92} (voir aussi \cite[Section~6]{Ko98}) qu'il n'existe qu'un nombre fini de familles de telles variétés.
\end{enumerate}
\end{proof}

\subsubsection*{Variétés de Fano}

Pour compléter ce panorama, voici trois références concernant les variétés de Fano réelles:
les cubiques réelles de $\PP^4$ ont été classifiées par Krasnov \cite{Kras06,Kras09} et les structures réelles sur la variété de Fano $V_{22}$ ont été classifiées par Koll\'ar et Schreyer \cite{KS04}.

%%%%%%%%%%%%%%%%%%%%%%
\section{Surfaces singulières et paraboles}\label{sec.singsurf}

Dans cette section, basée sur \cite{KoIII,cm1,cm2}, nous donnons une classification des types topologiques possibles pour les surfaces singulières de Du Val (\ref{dfn.duval}) géométriquement rationnelles (voir p.~\pageref{geom.rat}). Pour ce faire, il est commode de considérer une structure d'orbifold à points coniques (\ref{dfn.cone}) sur les composantes connexes de la normalisée topologique (\ref{dfn.norm}). Comme conséquence, nous obtenons une généralisation du théorème de Comessatti (\ref{thm.cm}). Une autre conséquence, qui fut la motivation initiale de cette partie, est la preuve de trois conjectures de Koll\'ar sur les variétés rationnellement connexes (\ref{cor.orbi}). 

\subsubsection*{Surfaces de Du Val}
   
Sur une surface, un point double rationnel est appelé une singularité de Du Val. Ces singularités sont les quotients de $\CC^2$ par les sous-groupes finis de $\SL_2(\CC)$. 

\begin{dfn}\label{dfn.duval}
Une surface projective est une surface \emph{de Du Val} si ses seules singularités sont des  points doubles rationnels.
\end{dfn}

Sur $\CC$, les singularités de Du Val sont classifiées par les diagrammes de Dynkin : il y a 
les \emph{cycliques} $A_\mu$, $\mu \geq 1$, les \emph{diédrales} $D_\mu$,  $\mu \geq 4$, la \emph{tétraédrale} $E_6$, l'\emph{octaédrale} $E_7$ et l'\emph{icosaédrale} $E_8$.
Sur $\RR$, il y a beaucoup plus de possibilités et nous nous limiterons ici à deux séries de singularités cycliques.

Une surface réelle $X$ admet une singularité $A^\pm_\mu$ en un point $P\in X(\RR)$ si au voisinage de $P$, $X$ est $\RR$-analytiquement isomorphe à
$$
x^2 \pm y^2 - z^{\mu + 1} = 0,\ \mu \geq 1
$$

\begin{figure}[ht]
\centering
$A^+_\mu, \mu$ pair \quad \includegraphics[scale=0.8]{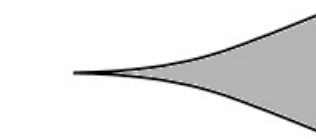}\qquad   $A^+_\mu, \mu$ impair \quad \includegraphics[scale=.5]{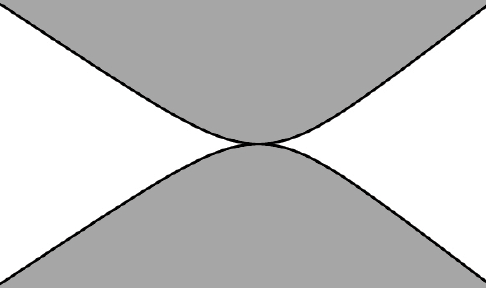}
\caption{$A^+_\mu$, \ $x^2 + y^2 - z^{\mu + 1} = 0,\ \mu \geq 1$}
        \label{fig.a+}
\end{figure}

\begin{figure}[ht]
\centering
$A^-_\mu, \mu$ pair \quad  \includegraphics[scale=0.8]{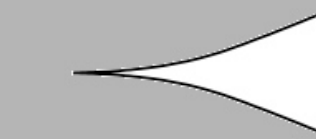} \qquad $A^-_\mu, \mu$ impair \quad \includegraphics[scale=.5]{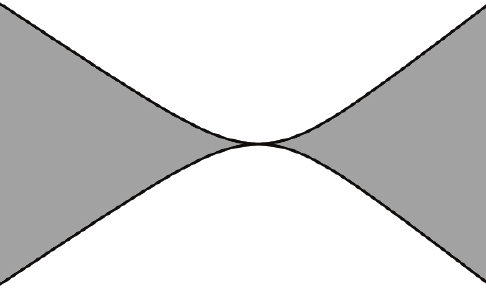}
\caption{$A^-_\mu$, \ $x^2 - y^2 - z^{\mu + 1} = 0,\ \mu \geq 1$}
        \label{fig.a-}
\end{figure}

La partie grisée de la figure~\ref{fig.a+} représente la zone du plan $\RR^2_{z,x}$ où $z^{\mu + 1}-x^2$ est positif. La surface $X$ qui est localement revêtement double du plan ramifié en la courbe $z^{\mu + 1}-x^2=0$ possède des points réels exclusivement au-dessus de cette zone.

Signalons que deux singularités de noms différents ne sont pas isomorphes à l'exception de $A_1^+$ et $A_1^-$.

\begin{figure}[ht]
\centering
\includegraphics[scale=0.4]{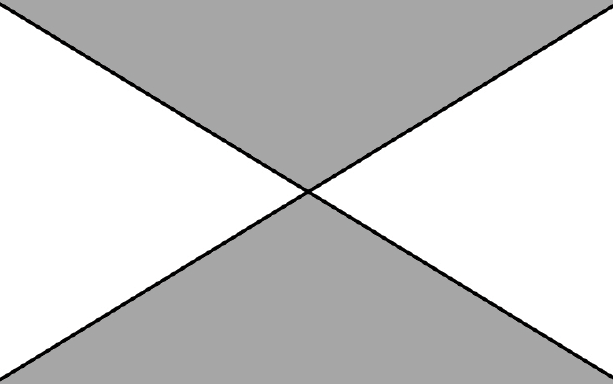} \qquad \includegraphics[scale=0.4]{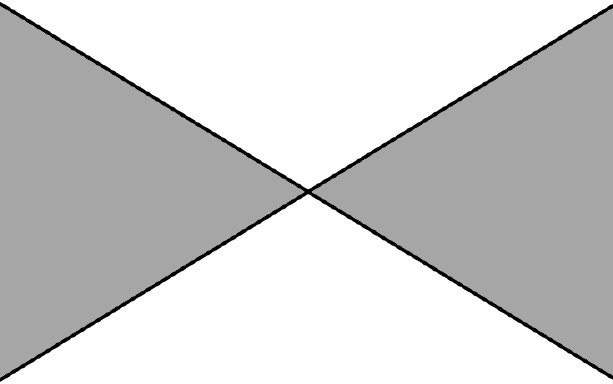}
\caption{$A_1^+\cong A_1^-$}
        \label{fig.a1}
\end{figure}

\subsubsection*{Orbifolds de dimension 2}\label{p.orbi}

On peut justifier ce terme d'orbifold par le fait qu'en anglais, une variété topologique $M$ de dimension $n$ est appelée "$n$-manifold", dans ce cas, $M$ est muni d'une famille de cartes $(\widetilde U,\phi)$ où $\widetilde U$ est un ouvert et $\phi$ un homéomorphisme sur un ouvert $U\subset \RR^n$.
   
Un $n$-orbifold est muni d'un atlas où cette fois une carte $\phi\colon \widetilde U \to U\subset \RR^n$ est un revêtement ramifié fini. On retrouve les variétés si tous les $\phi$ sont de degré~$1$. Plus précisément, chaque ouvert de carte est munie de l'action d'un groupe fini $G$ et $\phi$ se factorise par un homéomorphisme $ \raisebox{-.65ex}{\ensuremath{G}}\!\backslash \widetilde U \to U$ \cite[Chapter~2]{BMP03}.

\begin{dfn}\label{dfn.cone}
 Si $G$ est  cyclique et agit par rotation d'angle $2\pi/\mu$, le point fixe est appelé
un \emph{point conique} d'indice $\mu$.  
\end{dfn}

En général, un orbifold n'est pas homéomorphe à une variété.
Mais en dimension~$2$, tout orbifold $M$ est homéomorphe à une variété topologique notée $\vert M \vert$.

Pour $(p,q)=1$, on note $\sS(p,q)$ l'orbifold de surface lisse sous-jacente $\vert \sS(p,q) \vert=\sS^2$ avec deux points coniques d'indices $p$ et $q$ respectivement. 

\subsubsection*{Normalisation topologique}
Pour traiter de la situation où le lieu réel est singulier, il est commode d'introduire une notion imitant la "séparation des branches" en géométrie algébrique (cf. \cite{KoII}).

\begin{dfn}\label{dfn.norm}
Soit $V$ un complexe simplicial dont le lieu singulier $\Sing(V)$ est fini. La normalisation topologique 
$
\overline{\nu} \colon \overline{V} \to V$ 
est l'unique application propre et continue qui vérifie:

\begin{enumerate}
\item $\overline{\nu}$ est un homéomorphisme au-dessus de $V\setminus \Sing(V)$,

\item si $P\in \Sing(V)$, la fibre $\overline{\nu}^{-1}(P)$ est en bijection avec l'ensemble des composantes connexes locales de  $V$ au voisinage de $P$.
\end{enumerate}
\end{dfn}

Soit $M \subset \overline{\xr}$ une composante connexe de la normalisée topologique du lieu réel d'une surface de Du Val. On munit $M$ d'une structure d'orbifold dont les points coniques d'indice $\mu$ correspondent aux points singuliers de type $A^\pm_\mu$ avec $\mu$ impair vérifiant certaines conditions (voir \ref{dfn.glob}).

\subsection*{Généralisation du Théorème de Comessatti}

Soit $X$ une surface algébrique réelle géométriquement rationnelle et $M \subset \overline{\xr}$  une composante connexe de la normalisée topologique du lieu réel.

\begin{thm}[Comessatti, 1914]
Si $X$ est non singulière et si $M$ est orientable, alors
$M$ est une sphère ou un tore.
\end{thm}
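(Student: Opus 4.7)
Le plan est de se ramener à un modèle minimal via le programme du modèle minimal sur $\RR$ pour les surfaces, puis d'inspecter cas par cas la topologie du lieu réel des surfaces minimales géométriquement rationnelles. Par le critère de Castelnuovo, tant qu'il subsiste sur $X$ soit une $(-1)$-courbe réelle, soit une paire $E\sqcup \overline E$ de $(-1)$-courbes conjuguées et disjointes, on peut contracter l'une ou l'autre; le processus se termine sur un modèle $X^*$ minimal sur $\RR$, encore géométriquement rationnel.

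Suivons la composante $M$ à travers ce MMP. Comme rappelé en Section~\ref{sec.surf}, éclater un point réel transforme la composante connexe le contenant par somme connexe avec $\RR\PP^2$; inversement, contracter une $(-1)$-courbe réelle revient à retirer un facteur $\RR\PP^2$. L'éclatement (ou la contraction) d'une paire conjuguée de courbes non réelles ne change pas topologiquement $\xr$. Puisque $M$ est orientable, aucun facteur $\RR\PP^2$ n'intervient dans la somme connexe qui lui correspond, donc aucune contraction réelle au cours du MMP n'affecte $M$: cette composante descend, à difféomorphisme près, sur une composante connexe $M^*$ de $X^*(\RR)$.

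Il reste à prouver que toute composante connexe orientable du lieu réel d'une surface minimale réelle géométriquement rationnelle est $\sS^2$ ou $\sS^1\times\sS^1$. D'après la classification d'Iskovskikh sur un corps non algébriquement clos, $X^*$ est soit une surface de del Pezzo réelle de nombre de Picard $1$, soit un fibré en coniques $\rho\colon X^*\to\PP^1$ de nombre de Picard $2$. Pour le fibré en coniques, la projection $\rho\colon X^*(\RR)\to\PP^1(\RR)=\sS^1$ a ses fibres génériques difféomorphes à $\sS^1$ et ses fibres singulières réelles réduites à un point (conique dégénérée en une paire de droites conjuguées) ou vides. Une analyse fibrée immédiate identifie alors toute composante connexe orientable à un tore (fibration en cercles non triviale au-dessus de $\sS^1$) ou à une sphère (fibration en cercles au-dessus d'un intervalle dont les extrémités correspondent à des fibres réduites à un point).

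Le point délicat est le cas des surfaces de del Pezzo minimales réelles, notamment en petit degré. Pour $\PP^2$, le lieu réel $\RR\PP^2$ est non orientable et il n'y a rien à démontrer. Pour les autres cas (quadrique lisse de $\PP^3$, $\PP^1\times\PP^1$ muni d'une structure réelle variable, et del Pezzo minimales de degré $\leq 7$), une vérification directe via leur description galoisienne montre que toute composante connexe orientable du lieu réel est soit $\sS^2$ soit $\sS^1\times\sS^1$, les autres topologies possibles étant exclues par l'inégalité de Comessatti liant la topologie du lieu réel aux invariants de Hodge et au rang de Picard, dont l'hypothèse $p_g(X^*)=q(X^*)=0$ (conséquence de la rationalité géométrique) donne une forme particulièrement simple.
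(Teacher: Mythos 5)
Votre stratégie --- descente le long du MMP sur $\RR$ jusqu'à un modèle minimal, puis classification d'Iskovskikh des surfaces géométriquement rationnelles minimales (fibrés en coniques ou surfaces de del Pezzo de rang de Picard $1$) --- est la voie standard, et son architecture est correcte ; notez que l'article ne démontre pas ce théorème (il renvoie à \cite{Co14}), et que sa preuve de la généralisation aux surfaces de Du Val (Théorème~\ref{thm.orbi}) repose précisément sur le type d'analyse qui manque dans votre texte. Vos deux premières étapes sont solides : une $(-1)$-courbe réelle de lieu réel non vide a un voisinage de Möbius dans $\xr$ (auto-intersection impaire), donc ne peut rencontrer la composante orientable $M$ ; et l'analyse du fibré en coniques (cylindres au-dessus d'arcs de $\sS^1$, refermés en calottes au-dessus des fibres dégénérées dont le lieu réel est un point) donne bien $\sS^2$, le tore ou la bouteille de Klein.

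Le point délicat, que vous identifiez vous-même, est le cas des del Pezzo minimales de rang $1$, et c'est là que se trouve une vraie lacune : vous affirmez la conclusion ("une vérification directe via leur description galoisienne") sans la mener, et l'outil invoqué ne suffit pas. L'inégalité de Comessatti (ou de Smith--Thom) borne $\chi(\xr)$ ou la somme des nombres de Betti de $\xr$ en fonction de $h^{1,1}$ ; pour une del Pezzo de degré $1$ ou $2$ on a $h^{1,1}=9$ ou $8$, ce qui n'exclut nullement une composante orientable de genre $2$, de caractéristique d'Euler $-2$. L'exclusion demande soit l'étude explicite des modèles anticanoniques (revêtement double de $\PP^2$ ramifié le long d'une quartique, du cône quadratique le long d'une sextique) avec un argument à la Bézout sur l'emboîtement des ovales --- exactement la réduction employée pour le Théorème~\ref{thm.orbi} ---, soit l'argument lagrangien évoqué après le Théorème~\ref{thm.vit}, qui court-circuite d'ailleurs tout le MMP : $M$ orientable est une surface lagrangienne de $\xc$, donc $[M]^2=-\chi(M)$ et $[M]\cdot\omega=0$ ; comme $p_g=q=0$, on a $H^2(X;\RR)=H^{1,1}$ et le théorème de l'indice de Hodge force $[M]^2\leq 0$, d'où $\chi(M)\geq 0$ et $M$ est une sphère ou un tore. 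Complétez le cas del Pezzo, ou adoptez directement ce second argument.
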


\begin{thm}[\cite{cm2}]\label{thm.cm}

Si $X$ est de Du Val et si $M$ est un orbifold orientable, alors
$M$ est sphérique ou euclidien.
\end{thm}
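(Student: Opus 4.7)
L'approche se déroule en trois temps : résolution des singularités et application du théorème classique de Comessatti, transport de la topologie vers l'orbifold $M$, puis conclusion via l'uniformisation des $2$-orbifolds.

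Première étape : résolution minimale. Soit $\pi\colon\widetilde{X}\to X$ la résolution minimale des singularités de Du Val de $X$. Comme $\pi$ est birationnelle et $X$ géométriquement rationnelle, $\widetilde{X}$ est une surface projective lisse géométriquement rationnelle. La version géométriquement rationnelle du théorème de Comessatti, rappelée en Section~\ref{sec.surf}, s'applique alors : toute composante connexe orientable de $\widetilde{X}(\RR)$ est difféomorphe à $\sS^2$ ou à $\sS^1\times\sS^1$, et a en particulier caractéristique d'Euler positive ou nulle.

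Deuxième étape : transport et structure d'orbifold. Pour $M\subset\overline{X(\RR)}$ une composante connexe orientable, on considère l'image réciproque $\widetilde{M}\subset\widetilde{X}(\RR)$ par la composée de $\pi$ et de la normalisation topologique. L'ensemble $\widetilde{M}$ est une union de composantes connexes orientables de $\widetilde{X}(\RR)$, recollées le long des diviseurs exceptionnels réels au-dessus des singularités $A^\pm_\mu$ de $X$ rencontrées. Pour chaque telle singularité, on détermine sa contribution topologique en fonction du signe et de la parité de $\mu$ : les singularités donnant lieu à un point conique d'indice $\mu$ impair dans $M$ (au sens de la Définition~\ref{dfn.glob}) sont précisément celles dont la structure locale réelle est un quotient par $\ZZ/\mu\ZZ$, tandis que les autres singularités ne produisent que des éclatements topologiques dans $\widetilde{X}(\RR)$. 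Un bilan combinatoire au voisinage de chaque singularité relie alors $\chi^{\mathrm{orb}}(M)$ à la somme $\sum_i\chi(\widetilde{M}_i)$ des caractéristiques des composantes orientables de $\widetilde{X}(\RR)$ dominant $M$.

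Troisième étape : conclusion. Puisque chaque $\widetilde{M}_i$ est difféomorphe à une sphère ou à un tore, on a $\chi(\widetilde{M}_i)\geq 0$, et le bilan de l'étape précédente entraîne $\chi^{\mathrm{orb}}(M)\geq 0$. Par le théorème d'uniformisation des $2$-orbifolds, $M$ admet donc une structure sphérique si $\chi^{\mathrm{orb}}(M)>0$, ou une structure euclidienne si $\chi^{\mathrm{orb}}(M)=0$.

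L'obstacle principal est l'analyse locale fine en chaque type de singularité $A^\pm_\mu$. Il s'agit de décrire le diviseur exceptionnel réel et son interaction avec la conjugaison complexe pour chaque valeur du signe et chaque parité de $\mu$, de comprendre précisément le passage par la normalisation topologique (séparation des branches réelles locales), et d'identifier les contributions exactes à la caractéristique d'Euler orbifold. C'est la comptabilité soigneuse de ces contributions qui garantit l'inégalité cruciale $\chi^{\mathrm{orb}}(M)\geq 0$, et qui dépend essentiellement de la définition retenue pour la structure d'orbifold sur les composantes de $\overline{X(\RR)}$.
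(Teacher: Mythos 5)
Votre stratégie --- résoudre les singularités, appliquer le théorème de Comessatti lisse à la résolution minimale $\widetilde{X}$, puis faire redescendre l'information sur $M$ par un bilan local --- n'est pas celle suivie ici, et elle bute sur un obstacle qui me paraît irréparable en l'état. Le point critique est votre deuxième étape. Lorsqu'un point singulier est \emph{globalement séparant} au sens de la Définition~\ref{dfn.glob}, ses deux branches locales appartiennent à des composantes connexes \emph{distinctes} de $\overline{X(\RR)}$, et la résolution minimale les recolle le long du lieu réel du diviseur exceptionnel~: la composante $\widetilde{M}\subset\widetilde{X}(\RR)$ qui domine $M$ est alors une somme connexe de $M$ avec d'autres composantes de $\overline{X(\RR)}$. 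La contrainte de Comessatti porte sur cet objet recollé et ne se localise pas en une inégalité $\chi^{\mathrm{orb}}(M)\geq 0$ composante par composante. Concrètement~: une composante $M\approx\sS^2$ portant six points $A_1$ globalement séparants, chacun la reliant à une autre composante sphérique, se résoudrait en $\widetilde{M}\approx\sS^2\#\cdots\#\sS^2\approx\sS^2$ (la résolution d'un $A_1$ réel remplace les deux disques locaux par un anneau, le lieu réel de la $(-2)$-courbe exceptionnelle étant un cercle)~; le théorème de Comessatti lisse est satisfait, alors que l'orbifold $M$ serait hyperbolique ($\chi^{\mathrm{orb}}=2-6\cdot\frac12<0$). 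C'est précisément ce genre de configuration que le Théorème~\ref{thm.orbi} ($k(M)\leq 4$) doit exclure, et votre argument ne peut pas le faire~: il n'utilise de la rationalité géométrique qu'une conséquence (Comessatti pour $\widetilde X$) trop faible pour séparer les contributions des différentes composantes.

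La preuve du texte est d'une toute autre nature~: le Théorème~\ref{thm.cm} y est déduit du Théorème~\ref{thm.orbi}, dont la démonstration réduit d'abord la question, via le programme du modèle minimal sur $\RR$ pour les surfaces de Du Val géométriquement rationnelles, à certains revêtements doubles du cône quadratique ramifiés le long de courbes singulières de degré~$6$, puis conclut par une énumération des schémas réels possibles de la courbe de ramification (ce sont les configurations de paraboles des figures~\ref{fig:6points} et~\ref{fig:5points}). C'est donc un argument global de classification, et non un argument local de résolution. Signalons enfin un point secondaire dans votre troisième étape~: l'uniformisation des $2$-orbifolds ne dit pas que $\chi^{\mathrm{orb}}\geq 0$ entraîne sphérique ou euclidien, à cause des orbifolds \emph{mauvais} (sphère avec un seul point conique, ou avec deux points coniques d'indices distincts)~; même une fois l'inégalité acquise, il reste à écarter ces cas, ce que fournissent les contraintes plus fines du Théorème~\ref{thm.orbi}.
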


Ce résultat est un corollaire du Théorème~\ref{thm.orbi} ci-dessous.

 Soit $M$ un orbifold compact de dimension $2$ muni d'un revêtement fini global $\widetilde M \to M$ d'ordre $d$ par une surface lisse. Dans ce cas, la caractéristique d'Euler d'orbifold est définie par
 $$
 \chi(M) := \frac 1d \chi(\widetilde M) \in \QQ
 $$
 
Soit
$M$ un $2$-orbifold avec $k$ points coniques, dont les angles des cônes sont
$2\pi/k_j$, $j = 1,\dots , k$ et 
$\vert M \vert$ la surface lisse sous-jacente à $M$, on a 

$$
\chi(M) = \chi(\vert M \vert) - \sum (1- \frac 1{k_j})
$$

En particulier, $M$ est sphérique ou euclidien
si et seulement si $\vert M \vert$ est sphérique ou euclidien et  $ \sum (1- \frac 1{k_j})\leq 2$.

Lorsque $X(\RR)$ est de dimension 2, la normalisée $\overline{X(\RR})$ est clairement une variété topologique et si $P \in X(\RR)$ est un point singulier de type $A^\pm_\mu$ avec $\mu$ impair, alors $\overline{X(\RR)}$ possède deux composantes connexes locales au voisinage de $P$.

\begin{figure}[ht]
\centering
\includegraphics[scale=.5]{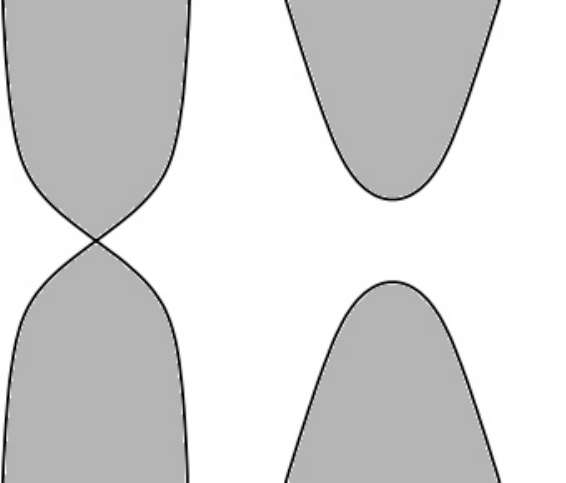}
\caption{$M$ et $\overline{M}$ au voisinage d'un point singulier $A^\pm_\mu$ avec $\mu$ impair.}
        \label{fig.norma}
\end{figure}

\begin{dfn}\label{dfn.glob}
Le point $P$ est \emph{globalement non séparant} si les deux composantes connexes locales au voisinage de $P$  sont sur la même composante connexe de $\overline{X(\RR)}$ et  \emph{globalement séparant} sinon.
\end{dfn}

Soit $X$ une surface réelle de Du Val, $\overline{\nu} \colon \overline{\xr} \to \xr$ la normalisation topologique du lieu réel. On note
 $\Sigma_X$ l'ensemble des points singuliers réels de type $A^-_\mu,\ \mu$ pair ou de type $A^-_\mu,\ \mu$ impair et globalement non séparant. On note $\mathcal{P}_X := \Sing(X)\setminus \Sigma_X$ l'ensemble des autres points singuliers. 

Pour une composante connexe $M\subset \overline {X(\RR)}$, on note $k(M)$ le cardinal $\#\{\overline{\nu}^{-1}(\mathcal{P}_X) \cap M\}$ et $\mu_i(M)$ pour $i=1\dots k(M)$ l'indice d'un point de $\mathcal{P}_X \cap \overline{\nu}(M)$.

 \begin{thm}[\cite{cm1,cm2}]\label{thm.orbi}
Soit $X$ une surface algébrique réelle et $M\subset \overline {X(\RR)}$ une composante connexe de la normalisée topologique du lieu réel. Si $X$ est géométriquement rationnelle, alors

\begin{itemize}
\item $k(M)  \leq 4$,
\item $\sum_{i=1}^k (1- \frac 1{\mu_i+1}) \leq 2$
\item $\vert M \vert =  \sS^1\times \sS^1 \Rightarrow k(M) = 0$

\end{itemize}
\end{thm}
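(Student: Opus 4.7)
Le plan est de procéder par réduction au cas lisse via la résolution minimale des singularités, puis d'exploiter le programme du modèle minimal (MMP) sur $\RR$ pour les surfaces géométriquement rationnelles. Je commencerais par considérer la résolution minimale $\pi\colon \widetilde X \to X$. Comme les singularités sont de Du Val, la surface $\widetilde X$ est lisse et encore géométriquement rationnelle, de sorte que chacune de ses composantes connexes réelles est décrite par le théorème de Comessatti (\ref{thm.co}) et la classification des surfaces géométriquement rationnelles lisses : sphère, tore, ou surface non orientable.

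Ensuite, pour chaque point singulier $P \in \mathcal{P}_X\cap \overline{\nu}(M)$ de type $A^\pm_{\mu_i}$, la partie réelle de la chaîne exceptionnelle $\pi^{-1}(P)$, formée de $\mu_i$ courbes rationnelles $(-2)$, est explicitement calculable : elle consiste en une configuration d'arcs et de cercles dont le type dépend du signe $\pm$ et de la parité de $\mu_i$. J'en déduirais que la composante connexe $\widetilde M \subset \widetilde X(\RR)$ au-dessus de $\overline{\nu}(M)$ s'obtient à partir de la surface sous-jacente $\vert M\vert$ en substituant à chaque point conique d'indice $\mu_i+1$ une pièce élémentaire (anse ou disque) dont la contribution à la caractéristique d'Euler s'exprime précisément en fonction de $1-\frac{1}{\mu_i+1}$.

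L'étape cruciale consiste alors à appliquer le MMP réel à $\widetilde X$ pour se ramener à un fibré de Mori : fibré en coniques au-dessus d'une courbe, surface de del Pezzo, ou $\PP^2$. Sur chacun de ces modèles minimaux, le nombre et les types de fibres singulières réelles sont fortement contraints, et un argument de comptage cas par cas (en s'appuyant sur Kollár \cite{KoIII} pour les fibrés en coniques) permet de borner simultanément le nombre et les indices des points coniques portés par $M$, donnant $k(M)\leq 4$ et $\sum(1-\frac{1}{\mu_i+1})\leq 2$. La dernière assertion, $\vert M\vert=\sS^1\times\sS^1 \Rightarrow k(M)=0$, s'obtiendrait en remarquant qu'une composante torique de la normalisée topologique ne peut apparaître que dans des configurations très particulières (typiquement associées à $\PP^1\times\PP^1$ ou à un fibré en coniques sans fibre singulière réelle au-dessus de la composante correspondante de la base), ce qui force l'absence de points de $\mathcal{P}_X$ sur cette composante.

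La difficulté principale sera de suivre avec précision la normalisation topologique $\overline{\nu}$ et, surtout, la distinction globalement séparant/globalement non séparant au travers des transformations élémentaires du MMP : les contractions peuvent fusionner des composantes de $\overline{X(\RR)}$ et changer le caractère séparant ou non d'un point singulier, déplaçant ainsi des points entre $\Sigma_X$ et $\mathcal{P}_X$. Il faudra donc formuler une version du MMP réel qui contrôle finement ces déplacements, ce qui paraît être le contenu technique essentiel de \cite{cm1,cm2}. Une fois cet outil en main, les trois inégalités annoncées se déduisent toutes du même argument combinatoire appliqué aux modèles finaux.
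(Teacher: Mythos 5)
Votre stratégie générale (réduction par le MMP réel à des modèles minimaux, puis énumération cas par cas) va dans la bonne direction, mais telle qu'elle est formulée elle comporte une lacune essentielle : vous proposez d'appliquer le MMP à la résolution minimale lisse $\widetilde X$. Or le MMP des surfaces lisses aboutit à $\PP^2$, à une quadrique ou à un fibré en coniques minimal, et toute trace des points singuliers de $X$ --- précisément les objets que l'on veut compter --- disparaît en route ; de plus, les surfaces de del Pezzo générales que vous citez comme modèles finaux ne sont des sorties du MMP que dans la catégorie des surfaces à singularités de Du Val. Il faut donc faire tourner le MMP sur la surface singulière $X$ elle-même (les singularités de Du Val étant canoniques, cela a un sens), ce qui conduit aux fibrés en coniques de Du Val (traités via \cite{KoIII}) et surtout aux surfaces de del Pezzo de Du Val de petit degré. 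Le c{\oe}ur de la preuve de \cite{cm1,cm2}, que votre plan ne capture pas, est la réduction au cas des revêtements doubles du cône quadratique ramifiés le long d'une courbe de degré $6$ (del Pezzo de degré $1$) : les points de $\mathcal{P}_X$ se lisent alors sur les points doubles réels de la courbe de ramification, et une énumération soigneuse des décompositions possibles de cette sextique (par exemple en trois coniques, figurées par trois paraboles affines), composante connexe par composante connexe, donne $k(M)\leq 4$ et la borne sur $\sum (1-\frac 1{\mu_i+1})$.

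Deuxième point : votre affirmation selon laquelle la résolution remplace chaque point conique par une pièce dont la contribution à la caractéristique d'Euler "s'exprime en fonction de $1-\frac 1{\mu_i+1}$" ne peut pas être correcte telle quelle, puisque la résolution modifie $\chi$ d'un entier alors que $1-\frac 1{\mu_i+1}$ est rationnel ; la caractéristique d'Euler orbifolde est une définition formelle attachée aux indices des points coniques, et l'inégalité $\sum (1-\frac 1{\mu_i+1})\leq 2$ ne se déduit pas d'un calcul de $\chi(\widetilde M)$ combiné au théorème de Comessatti~\ref{thm.co}, mais de la même énumération sur le modèle minimal de Du Val. En revanche, vous identifiez correctement la difficulté technique du suivi de la normalisation topologique et de la distinction séparant/non séparant le long des étapes du MMP : c'est bien là l'un des points délicats de \cite{cm1,cm2}.
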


\begin{proof}
Nous discuterons uniquement ici de l'inégalité $k(M)  \leq 4$. La partie principale de la preuve consiste à réduire les cas à certains revêtements doubles du cône quadratique ramifiés le long de courbes singulières de degré 6. Une énumération astucieuse des cas permet alors de conclure. Pour fixer les idées, le lecteur est invité à compter les points doubles de chaque composante connexe sur les figures~\ref{fig:6points} et \ref{fig:5points} qui liste tous les cas possibles avec trois composantes irréductibles qui apparaissent ici comme trois paraboles dans le plan affine. Pour lire ces figures, il faut savoir que deux composantes connexes sont connectées à l'infini si leur bord possède deux arcs non bornés appartenant à la même paire de paraboles.

\begin{figure}[htbp]
\begin{center}
   \includegraphics[scale=.5]{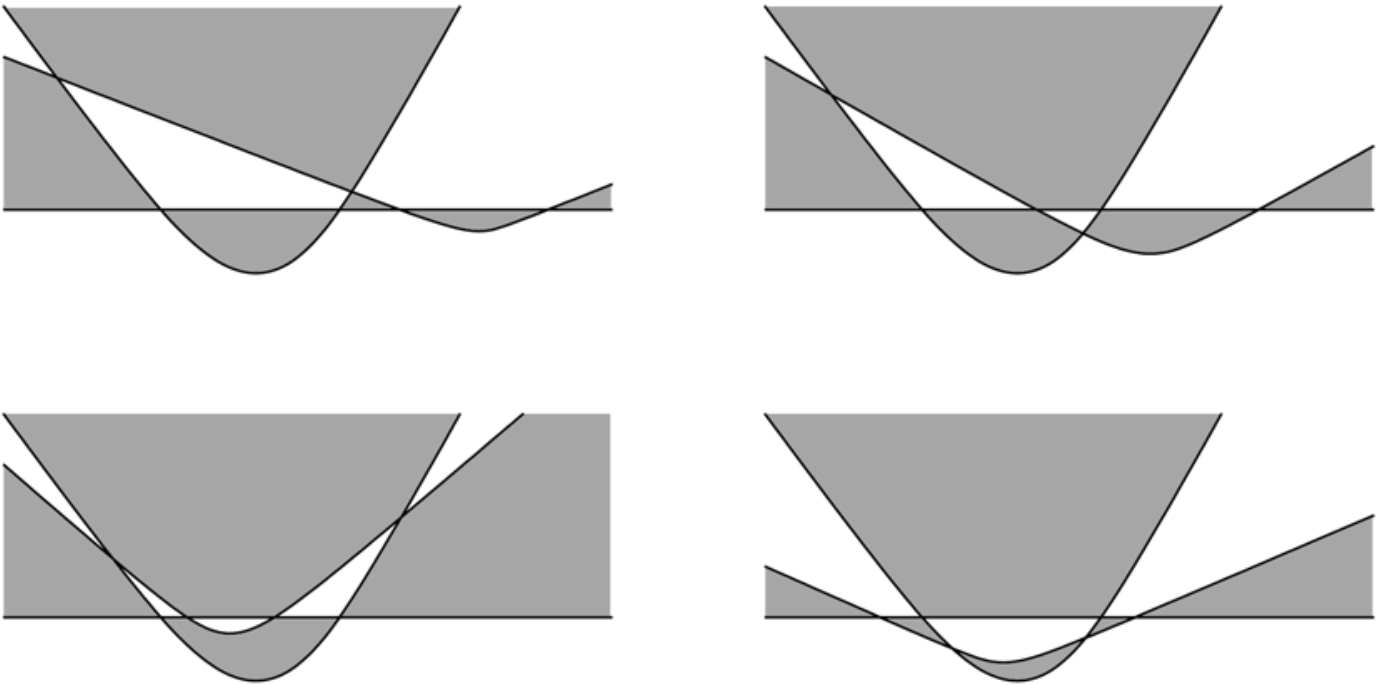}
\caption{$6$ points $A_1$.}\label{fig:6points}
\end{center}
\end{figure}
\begin{figure}[htbp]
\begin{center}
   \includegraphics[scale=.5]{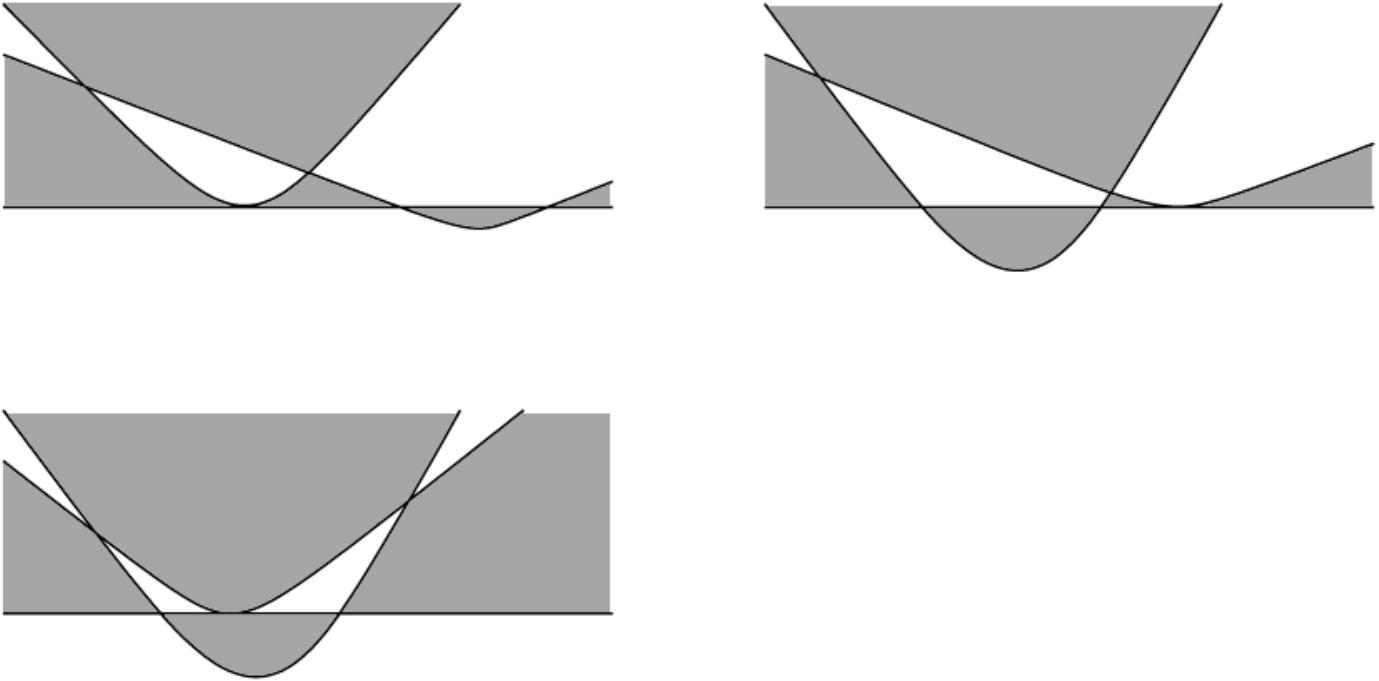}
   \includegraphics[scale=.5]{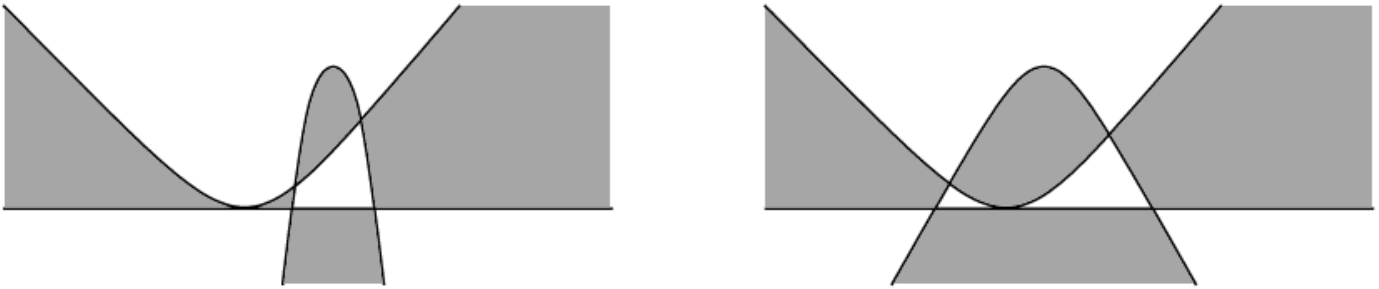}
\caption{$4$ points $A_1$, $1$ point $A_2$.}\label{fig:5points}
\end{center}
\end{figure}
\end{proof}

Si $M\to B$ est une variété de Seifert, on note $k$ le nombre de fibres multiples et pour chaque fibre multiple $i=1\dots k$, $\mu_i$ sa multiplicité. Si $M$ est une somme connexe d'espace lenticulaires, on note $k$ le nombre de lenticulaires et pour chaque lenticulaire $i=1\dots k$, $\mu_i$  l'ordre de son groupe fondamental.

\begin{cor}\label{cor.orbi}
Soit $X\longrightarrow B$ une variété projective réelle de dimension~$3$ fibrée en $\PP^1$ de lieu réel orientable. Si $B$ est une surface géométriquement rationnelle, alors pour toute composante connexe $M\subset X(\RR)$,

\begin{itemize}
\item $k(M)  \leq 4$,
\item $\sum_{i=1}^k (1- \frac 1{\mu_i+1}) \leq 2$
\item $\vert M \vert =  \sS^1\times \sS^1 \Rightarrow k(M) = 0$

\end{itemize}
\end{cor}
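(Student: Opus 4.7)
The plan is to reduce Corollary~\ref{cor.orbi} to Theorem~\ref{thm.orbi} applied to a suitable Du Val geometrically rational surface associated to the fibration $X\to B$. The key ingredient is a topological dictionary between the Du Val singularities of the base of a smooth conic bundle and the Seifert or lens-space invariants of the real fibre components of the total space.

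First, I would reduce to a convenient model. By running the relative minimal model program for the $\PP^1$-fibration $X\to B$, and using the fact that the invariants $(k(M),\mu_i(M))$ attached to a connected component $M\subset X(\RR)$ are unchanged by connected summing with copies of $\RR\PP^3$ or $\sS^1\times\sS^2$ (cf.\ Example~\ref{ex.blow}), one may replace $X\to B$ by a smooth $\PP^1$-fibration $X'\to B'$ whose base $B'$ is a projective Du Val geometrically rational surface, without altering the relevant Seifert or lens-space data of the component of $X'(\RR)$ that corresponds to $M$.

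Second, I would set up the dictionary. Let $N\subset B'(\RR)$ be the image of $M$ and $\overline N$ its preimage in the topological normalisation $\overline{B'(\RR)}$. Endow $\overline N$ with the orbifold structure that puts a cone point of index $\mu$ at each point of $\mathcal{P}_{B'}\cap\overline N$ of type $A^\pm_{\mu-1}$. A local analysis of $X'\to B'$ above such a Du Val point should show that the induced map $M\to \overline N$ is either a Seifert fibration whose multiple fibres of multiplicity $\mu$ lie precisely over the cone points of $\overline N$ of index $\mu$, or, when $|\overline N|\approx \sS^2$, realises $M$ as a connected sum of lens spaces $\#_l\LL_{p_l,q_l}$ in which each $p_l$ equals the index of the corresponding cone point of $\overline N$. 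In either situation, $k(M)=k(\overline N)$ and $\mu_i(M)+1$ coincides with the index of the $i$-th cone point of $\overline N$.

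Third, I would invoke Theorem~\ref{thm.orbi} for $B'$ and $\overline N$: since $B'$ is geometrically rational, one immediately obtains $k(\overline N)\leq 4$, $\sum_{i=1}^{k(\overline N)}\bigl(1-\tfrac{1}{\mu_i+1}\bigr)\leq 2$, and $|\overline N|\approx \sS^1\times\sS^1\Rightarrow k(\overline N)=0$. Via the dictionary of step two, these three statements translate verbatim into the three bullets of the corollary.

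The hard part is step two: establishing the precise local correspondence between an $A^\pm_{\mu-1}$ singularity of the base of a smooth real conic bundle and a Seifert solid torus of multiplicity $\mu$ (or a lens-space summand $\LL_{\mu,q}$) in the real locus of the total space. This requires an explicit local model for $X'\to B'$ near a singular fibre sitting over a Du Val point and a careful identification of real loci, in particular distinguishing the cases $A^+_\mu$ versus $A^-_\mu$ and globally separating versus globally non-separating nodes, in order to determine whether the contribution is a Seifert multiple fibre or a lens-space summand. Step one, the MMP reduction, is more routine but still demands attention to track the invariants $(k,\mu_i)$ through each birational transformation.
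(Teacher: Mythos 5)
Your reduction of the corollary to Theorem~\ref{thm.orbi} via a dictionary matching the Du Val points of the (suitably modified) base with the Seifert multiple fibres or lens-space summands of $M\subset X(\RR)$ is exactly the route the paper intends: the corollary is stated immediately after $k$ and $\mu_i$ are redefined in Seifert/lens-space terms, with no further argument given because the local correspondence over an $A^\pm_\mu$ point is the content of \cite{KoIII} and \cite{cm1,cm2}. Your proposal follows the same approach and correctly isolates the genuinely hard step, namely that local analysis.
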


Ce résultat prouve la conjecture de Koll\'ar \cite[Conjecture~6.7.(3)]{Ko01} en répondant par l'affirmative aux trois questions \cite[Remark~1.2(1--3)]{KoIII}.
 
 \subsubsection*{Composantes non orientables}

Par éclatement de points réels lisses du plan projectif réel, on réalise toutes les surfaces non orientables comme composantes de surfaces rationnelles réelles non singulières. 
De la même manière, on réalise aisément des orbifolds hyperboliques non orientables 

Lorsque $X$ est géométriquement rationnelle, minimale et non singulière, le Théorème de Comessatti implique que $M$ est sphérique ou euclidien. Dans le cas singulier, nous avons montré qu'il n'en est rien.

\begin{thm}[\cite{cm2}]
Il existe une surface de Du Val géométriquement rationnelle minimale $X$ dont une composante $M  \subset \overline {X(\RR)}$ est un orbifold de type hyperbolique
\end{thm}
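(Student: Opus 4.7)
La stratégie est de construire explicitement un tel $X$ à partir d'une surface rationnelle réelle lisse dont la topologie réelle est déjà riche, puis de contracter des configurations convenables pour arriver à une surface de Du Val minimale sans perdre l'essentiel de la topologie. Plus précisément, on part d'une surface géométriquement rationnelle lisse $Y$ dont une composante connexe $N\subset Y(\RR)$ est non orientable de grand genre, disons $N\approx \#^g\RR\PP^2$ avec $g\geq 3$. Une telle surface s'obtient aisément par éclatements itérés de points réels de $\PP^2_\RR$, comme rappelé après le Théorème~\ref{thm.co}. Le point clé est que la caractéristique d'Euler topologique de $N$ est déjà strictement négative ($\chi(N)=2-g<0$), de sorte que l'inégalité $\chi(M)=\chi(\vert M\vert)-\sum(1-1/k_j)<0$ pour l'orbifold sera satisfaite dès que l'on évite d'ajouter trop de points coniques sur $M$, quelle que soit leur contribution.

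On identifie ensuite sur $Y$ des configurations $\sigma$-invariantes de $(-2)$-courbes dont la contraction $\pi\colon Y\to X$ produit uniquement des singularités de Du Val. En choisissant judicieusement ces configurations (chaînes $A_\mu$, en prenant en compte la compatibilité avec la conjugaison complexe pour distinguer $A^+_\mu$ et $A^-_\mu$), on contrôle à la fois la normalisée topologique $\overline{X(\RR)}$ et la structure d'orbifold induite au sens de la Définition~\ref{dfn.glob}. On s'arrange pour que la composante $M\subset\overline{X(\RR)}$ correspondant à $N$ conserve $\vert M\vert$ non orientable de genre $\geq 3$, éventuellement augmentée d'un nombre raisonnable de points coniques d'indices $\mu_i$; le Théorème~\ref{thm.orbi} garantit dans tous les cas $\sum(1-1/(\mu_i+1))\leq 2$, si bien qu'il reste
$$
\chi(M)\leq 2-g+2<0,
$$
et $M$ est de type hyperbolique.

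Le point délicat est d'assurer la \emph{minimalité} de $X$ sur $\RR$. Cela exige de vérifier qu'aucune courbe $(-1)$ réelle, ni paire de $(-1)$-courbes conjuguées et disjointes, ni diviseur extrémal $\sigma$-invariant de type birationnel, ne subsiste sur une résolution minimale de $X$ qui se contracterait équivariamment sur une nouvelle surface projective de Du Val. En termes du MMP sur $\RR$ pour les surfaces, il s'agit de montrer que tous les rayons extrémaux $\sigma$-invariants de $\overline{NE}(X)$ sont de type fibration (fibration en coniques ou del Pezzo), et que la conjugaison échange par paires les éventuels rayons birationnels restants. C'est l'étape la plus technique du plan : elle oblige à dimensionner très précisément le nombre et la position des points éclatés au départ, ainsi que le choix des chaînes contractées, pour tuer toutes les contractions extrémales réelles non triviales tout en préservant la composante hyperbolique voulue.

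Une fois ces étapes réalisées, il reste à constater que $X$ ainsi construite est projective, géométriquement rationnelle (puisque birationnellement équivalente sur $\CC$ à la surface lisse $Y$ qui l'est), n'a que des points doubles rationnels, est minimale sur $\RR$ et possède, par construction, une composante $M$ dont la caractéristique d'Euler d'orbifold est strictement négative, donc hyperbolique. Cela fournit l'exemple annoncé.
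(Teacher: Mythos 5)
Le texte ne reproduit pas la démonstration de cet énoncé (il renvoie à \cite{cm2}), mais le contexte — en particulier l'esquisse de preuve du Théorème~\ref{thm.orbi}, qui ramène tout à des revêtements doubles du cône quadratique ramifiés le long de sextiques singulières — indique clairement la construction visée : une surface de Del Pezzo de Du Val de petit degré, minimale essentiellement par construction (nombre de Picard réel $1$), dont une composante non orientable de petite complexité topologique ($\RR\PP^2$ ou une bouteille de Klein) devient hyperbolique \emph{grâce aux points coniques} fournis par les singularités $A_\mu$. Votre plan prend la direction opposée : vous voulez que l'hyperbolicité vienne de $\chi(\vert M\vert)=2-g<0$ avec $\vert M\vert=\#^g\RR\PP^2$, $g\geq 3$, et que les points coniques soient négligeables. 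C'est précisément là que se trouve le trou : pour fabriquer une telle composante il faut éclater au moins deux points réels de $\PP^2$ (ou d'un modèle minimal), ce qui crée des $(-1)$-courbes réelles, donc des rayons extrémaux birationnels réels ; or contracter ensuite des configurations de $(-2)$-courbes ne tue pas ces classes. Toute la difficulté du théorème est la coexistence de la minimalité et de l'hyperbolicité, et vous reportez exactement cette étape (« la plus technique ») sans donner le mécanisme qui la rendrait possible. En l'état, rien n'indique qu'une surface de Du Val géométriquement rationnelle \emph{minimale} puisse porter une composante de genre non orientable $\geq 3$ — la classification des surfaces minimales lisses suggère fortement le contraire — et votre argument s'effondre si ce n'est pas le cas.

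Signalons aussi une erreur de manipulation des inégalités : la borne $\sum(1-\frac1{\mu_i+1})\leq 2$ du Théorème~\ref{thm.orbi} fournit une borne \emph{inférieure} $\chi(M)\geq\chi(\vert M\vert)-2$, pas une borne supérieure ; la majoration utile est simplement $\chi(M)\leq\chi(\vert M\vert)$ puisque chaque terme est positif. Votre ligne affichée $\chi(M)\leq 2-g+2<0$ est de surcroît fausse numériquement pour $g=3$ (on obtient $1$). La conclusion $\chi(M)<0$ resterait vraie pour $g\geq 3$ via la majoration triviale, mais cela ne répare pas le problème central de minimalité. Pour corriger l'approche, il faudrait renoncer au grand genre et construire explicitement, comme dans \cite{cm2}, un revêtement double $X\to\PP^2$ ou du cône quadratique ramifié le long d'une courbe réelle singulière bien choisie, vérifier que $X$ est un Del Pezzo de Du Val minimal sur $\RR$, puis calculer la structure d'orbifold de la composante non orientable de $\overline{X(\RR)}$ et constater que $\chi(M)<0$ à cause des points coniques.
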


\section{Questions et conjectures}\label{sec.conj}
\begin{enumerate}                                                                                                                                                                                                                                                                                      
\item En 2004, j'avais proposé la conjecture suivante à l'occasion de mon habilitation à diriger les recherches.
\begin{conj}
Les composantes réelles {\em géométriques} des variétés uniréglées réelles de
dimension 3 orientables sont exactement les variétés de Seifert
orientables.
\end{conj}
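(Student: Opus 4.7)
Le plan consiste à prouver séparément les deux implications de cette équivalence. L'implication $(\Leftarrow)$ est précisément le contenu du Théorème~\ref{thm.hm1}: toute variété de Seifert orientable se réalise comme composante connexe du lieu réel d'une variété projective réelle non singulière fibrée en $\PP^1$, donc \emph{a fortiori} uniréglée. Il reste donc à établir l'implication directe.

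Soit $X$ une variété projective réelle non singulière de dimension $3$, uniréglée, avec $X(\RR)$ orientable, et soit $M\subset X(\RR)$ une composante connexe supposée géométrique au sens de la Définition~\ref{dfn.geom}. Puisque $M$ est orientable (comme composante d'un lieu réel orientable) et géométrique, le Corollaire qui suit la Proposition~\ref{prop.seifert} affirme que $M$ est soit une variété de Seifert, soit modelée sur $\Sol$, soit modelée sur $\HH^3$. Le plan se réduit donc à exclure ces deux dernières possibilités.

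Le cas hyperbolique se traite immédiatement par le Corollaire~\ref{cor.vit} d'Eliashberg-Viterbo: aucune composante connexe du lieu réel d'une variété projective non singulière uniréglée de dimension $>2$ n'admet de métrique hyperbolique. Pour le cas $\Sol$, le Corollaire~\ref{cor.mw} --- qui combine le programme du modèle minimal (Théorème~\ref{thm.mmp}), le résultat de Koll\'ar sur les fibrés en coniques \cite{KoIII}, et le Théorème~\ref{thm.mw} --- ramène le problème aux variétés de Fano: si $M$ est $\Sol$, alors $X$ est birationnelle à une variété de Fano réelle $Y$ à singularités terminales telle que $Y(\RR)$ contienne une composante homéomorphe à $M$. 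Par \cite{Kawa92}, il n'existe qu'un nombre fini de familles de telles variétés de Fano, et la question est ramenée en principe à une vérification finie.

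Le c\oe ur de la difficulté et l'obstacle principal attendu sont précisément cette vérification finie: il s'agirait d'exploiter les classifications de variétés de Fano réelles de dimension~$3$ disponibles (cubiques lisses de $\PP^4$ par Krasnov \cite{Kras06,Kras09}, structures réelles sur $V_{22}$ par Koll\'ar-Schreyer \cite{KS04}, etc.) pour décrire les composantes connexes de leurs lieux réels, puis vérifier qu'aucune ne porte de géométrie $\Sol$. Pour les familles non encore classifiées de façon exhaustive, une approche envisageable serait de contraindre suffisamment la topologie d'une éventuelle composante $\Sol$ --- par exemple via son premier nombre de Betti, sa caractéristique d'Euler, ou la structure de sa suspension de difféomorphisme hyperbolique du tore --- pour obtenir une obstruction générale; mais il me semble plausible que seul un travail au cas par cas, guidé par l'analyse de Mori des fibrés sous-jacents, puisse conclure définitivement.
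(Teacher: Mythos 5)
L'énoncé que vous traitez n'est pas un théorème du texte mais une conjecture ouverte (proposée par l'auteur en 2004), et le texte n'en donne aucune démonstration : il observe seulement, exactement comme vous le faites, qu'au vu du Théorème~\ref{thm.tout} il reste au plus un nombre fini de contre-exemples potentiels, tous issus d'une variété de Fano à singularités terminales dont le lieu réel contiendrait une composante $\Sol$. Votre réduction est correcte et fidèle à celle du texte : le sens réciproque est le Théorème~\ref{thm.hm1}, le cas hyperbolique est exclu par le Corollaire~\ref{cor.vit}, et le cas $\Sol$ est ramené aux fibrés de Mori par le Théorème~\ref{thm.mmp}, puis aux seules variétés de Fano par \cite{KoIII} et le Théorème~\ref{thm.mw}.

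Le point bloquant est que le Corollaire~\ref{cor.mw} ne fournit que la \emph{finitude} des contre-exemples éventuels, pas leur inexistence, et c'est précisément là que votre proposition s'arrête --- vous le reconnaissez d'ailleurs honnêtement. La "vérification finie" que vous évoquez n'est réalisable avec aucun des résultats cités : la classification de Krasnov ne concerne que les cubiques de $\PP^4$ et celle de Koll\'ar--Schreyer que $V_{22}$, alors que la réduction par le MMP sur $\RR$ produit des variétés de Fano à singularités \emph{terminales}, dont les familles (en nombre fini d'après \cite{Kawa92}) ne sont pas explicitement classifiées sur $\RR$ et dont les types topologiques réels restent largement inconnus. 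Il n'existe pas non plus, à ce jour, d'obstruction générale du type que vous suggérez (premier nombre de Betti, structure de suspension d'un difféomorphisme hyperbolique du tore) qui exclurait une composante $\Sol$ du lieu réel d'une telle variété de Fano. Votre texte est donc un état des lieux exact du problème, mais pas une démonstration : l'étape finale reste ouverte, ce qu'indique d'ailleurs la parenthèse "(et conjecturalement aucune !)" du Corollaire~\ref{cor.mw}.
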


Au vu de \ref{thm.tout}, il y a au plus un nombre fini de contre-exemples éventuels à cette conjecture et pour qu'un tel contre-exemple existe, il faudrait qu'il existe une variété de Fano à singularité terminale dont le lieu réel contient une composante $\Sol$.

\item Nous reprenons une question posée dans \cite{mw1}. Nous avons vu qu'il existe des modèles uniréglés pour toutes les variétés de Seifert, mais la question suivante est ouverte : quel est le modèle projectif réel lisse le plus simples pour une variété $\HH^3$, pour une variété $\Sol$ ? 
\item En complément de $V_{22}$ \cite{KS04}, il serait intéressant d'étudier les structures réelles sur d'autres variétés de Fano.
\item En améliorant la construction de \cite{hm1}, il devrait être possible de montrer que toute variété de Seifert non orientable admet un modèle uniréglé.

\item 
\cite[Section~6]{Ko-simplest} contient une intéressante liste de problèmes ouverts (le problème 61 étant résolu par \ref{thm.vit}), par exemple : 
\begin{enumerate}
\item Quels sont les types topologiques possibles pour une hypersurface de degré $4$ dans $\RR\PP^4$~?
\item Quels sont les types topologiques possibles pour une variété de Calabi-Yau réelle de dimension~$3$~?

\end{enumerate}

\end{enumerate}

%%%%%%%%%%%%%%%%%%%%%%%%%
 \appendix 
 
 \section{Éclatements}\label{anex.eclat}
\subsection*{Éclatement de variétés $\cC^\infty$}
(Cette partie est développée à partir de \cite[2.1]{Mi97}.)

\subsubsection*{Fibré tautologique}
On désigne par $B_n\to \RR\PP^n$ le fibré tautologique au-dessus de l'espace projectif $\RR\PP^n$. Ce fibré a pour fibre en $L\in \RR\PP^n$ la droite vectorielle de $\RR^{n+1}$ représentée par $L$. C'est un fibré réel de rang 1. Rappelons comment en décrire  une trivialisation locale. Soit $v$ un vecteur non nul de $\RR^{n+1}$ et $L\in \RR\PP^n$ la droite vectorielle qu'il engendre. Soit $H\subset \RR^{n+1}$ un hyperplan supplémentaire de $L$. Notons $A\subset  \RR\PP^n$ l'ensemble des droites non contenues dans $H$. Chaque droite $L'$ appartenant au voisinage $A$ de $L$ contient alors exactement un vecteur de la forme $v+w(L')$ avec $w(L')\in H$. On a alors un homéomorphisme 
$$
A\times \RR \to B_n\vert_A,\quad
(L',t)\mapsto (L',tw(L'))
$$
qui est linéaire sur les fibres.

Par construction, $B_n$ est une sous-variété du produit $\RR^{n+1} \times \RR\PP^n$ et le morphisme du fibré tautologique est la restriction de la projection 
$$
\RR^{n+1} \times \RR\PP^n\to \RR\PP^n.
$$

Notons $\pi\colon B_n \to \RR^{n+1}$ la restriction de la projection sur le premier facteur. 
L'application $\pi$ induit alors un difféomorphisme :
$$
B_n\setminus E_P\stackrel{\approx}{\longrightarrow} \RR^{n+1}\setminus \{P\}
$$
où $P=(0,\dots,0)\in \RR^{n+1}$ et $E_P:=\pi^{-1}(P)$.

On dit que $\pi\colon B_n \to \RR^{n+1}$ est \emph{l'éclatement} de $\RR^{n+1}$ centré en $P$. 
La sous-variété $E_P$ de codimension~$1$ dans $B_n$ s'appelle le \emph{diviseur exceptionnel} de l'éclatement. Il découle immédiatement de la définition que $E_P$ est difféomorphe à $\RR\PP^n$.

\subsubsection*{Projectivisé du fibré normal}

Considérons une sous-variété compacte et sans bord $C$ de codimension $r$ dans une variété $M$ de classe $\cC^\infty$. Pour simplifier l'exposition, nous pouvons munir $M$ d'une métrique riemannienne. Soit $\cN\to C$ le fibré normal à $C$ dans $M$ qui est un fibré vectoriel de rang $r$. Notons 
$$
\pi_1\colon E_C\to C
$$ 
le fibré projectivisé de $\cN\to C$. Par définition, la fibre $\pi_1^{-1}(P)$ en $P\in C$ est l'espace projectif des droites de l'espace vectoriel $\cN_P$; $E_C$ est donc l'espace total d'un fibré de fibre $\RR\PP^{r-1}$ au-dessus de $C$.

\subsubsection*{Éclatement d'une variété le long d'une sous-variété}

Rappelons que $C$ étant plongée dans $M$, il existe une application $\cC^\infty$ injective $j\colon\cN\hookrightarrow M$ qui identifie $\cN$ avec un voisinage ouvert $U=j(\cN)$ de $C$ dans $M$. L'injection $j$ est appelée  \emph{voisinage tubulaire} de $C$ dans $M$. L'ouvert $U$ est souvent appelé lui aussi voisinage tubulaire. En retour, $j$ identifie $C$ avec la section nulle de $\cN$. On écrit abusivement $C\subset\cN$ et $j$ induit alors un difféomorphisme $\cN\setminus C \stackrel{\approx}{\longrightarrow} U\setminus C$. 
Notons $\widetilde U$ l'espace total du fibré tautologique au-dessus de $E_C$ et identifions $E_C$ à la section nulle $E_C\subset \widetilde U$. Par construction, $\widetilde U$ est alors une variété de même dimension que $M$ et nous avons un difféomorphisme naturel 
$$
\mu\colon \widetilde U \setminus E_C \stackrel{\approx}{\longrightarrow} U\setminus C
$$
qui s'étend en une application $\cC^\infty$
$$
f\colon \widetilde U \to  U\subset M
$$
telle que 
$f\vert_{E_C}=\pi_1$.

En passant sous silence les choix effectués au cours de cette construction, on donne la définition suivante.
\begin{dfn}
La variété \emph{éclatée} $\widetilde M$ de $M$ le long de $C$ est obtenue en recollant $\widetilde U$ à $M\setminus C$ par le difféomorphisme $\mu$. L'application $\cC^\infty$ 
$$
\pi\colon \widetilde M \to M
$$ 
définie par
$\pi\vert_{M\setminus C}=\id$ et $\pi\vert_{\widetilde U}=f$
est \emph{l'éclatement topologique} de $M$ le long de~$C$.
\end{dfn}

La sous-variété $C\subset M$ est le \emph{centre} de l'éclatement et la sous-variété $E_C$ de codimension~$1$  dans $\widetilde M$ est le \emph{diviseur exceptionnel}. 
On note souvent $B_CM:=\widetilde M$ la variété éclatée.

Si $L\subset M$ est un  sous-ensemble fermé, on dit qu'un sous-ensemble $\widetilde L\subset \widetilde M$ est le \emph{transformé strict} de $L$ si 
\begin{itemize}
\item $\pi(\widetilde L)=L$,
\item $\widetilde L$ est fermé dans $\widetilde M$,
\item $\widetilde L\setminus E_C$ est dense dans $\widetilde L$.
\end{itemize}
Le lecteur intéressé par une discussion plus détaillée est invité à consulter \cite[Section~2]{AK85}.

%%%%%%%%%%
\subsection*{Éclatement de variétés algébriques}

\subsubsection*{Transformée birationnelle}
Considérons maintenant une sous-variété algébrique $W\subset \PP^N$ donnée par $r$ équations $\{f_1=0,\dots,f_r=0\}$. 

\begin{dfn}
La variété \emph{éclatée} de $\PP^N$ le long de $W$ est la sous-variété $B_W\PP^N$ de $\PP^N_{x_0:\dots:x_N}\times\PP^{r-1}_{y_1:\dots:y_r}$ donnée par les $r-1$ équations 
$$
\left\{
\begin{array}{ccc}
y_1f_2(x_0,\dots,x_N)-y_2f_1(x_0,\dots,x_N)=0,\\
y_2f_3(x_0,\dots,x_N)-y_3f_2(x_0,\dots,x_N)=0,\\
\vdots\\
y_{r-1}f_r(x_0,\dots,x_N)-y_rf_{r-1}(x_0,\dots,x_N)=0.
\end{array}
\right.
$$

Et l'éclatement $\pi_W\colon B_W\PP^N \to \PP^N$ est donné par :
 $$
 \left(
 (x_0:\dots:x_N),\ (y_1:\dots:y_r)\right)\mapsto (x_0:\dots:x_N).
 $$
\end{dfn}

Si $\codim W =r$, on retrouve l'interprétation en terme de fibré normal en chaque point lisse de $W$.

Pour  une sous-variété $V\subset \PP^N$, nous notons $\widetilde  V$ l'adhérence de Zariski de $\pi_W^{-1}(V\setminus W\cap V)$ dans $B_W\PP^N$. 

\begin{dfn}\label{app.transf.bir}
La sous-variété $\widetilde  V$ est appelée \emph{transformée birationnelle} (ou \emph{transformée stricte}) de $V$ par $\pi_W$.
\end{dfn}

On peut montrer que cette variété ne dépend pas des plongements de $V\subset \PP^N$ et $W\subset \PP^N$ mais seulement du plongement $W\cap V\subset V$.
\begin{dfn}
Si $W\subset V$, la restriction $\widetilde  V\to V$ de $\pi_W$ est \emph{l'éclatement algébrique} de $V$ le long de $W$ et on note $B_WV  :=\widetilde  V$.
\end{dfn}

Dans le cas où $V$ et $W$ sont lisses, on a un difféomorphisme entre l'éclaté topologique et l'éclaté algébrique qui commute avec les morphismes au-dessus de $V$.

%%%%%%%%%%
\subsection*{Topologie des éclatements}
Revoir si besoin la définition de la somme connexe de deux variétés orientées p.~\pageref{somme.connexe}. Si l'une des deux variétés est non orientable, on définit la somme connexe comme variété non orientable.

\begin{prop}\label{prop.eclat}
Soit $P\in\RR^n$, alors 
$B_P\RR^n$ est difféomorphe à $\RR^n\#\RR\PP^{n-1}$. Plus généralement, si $C\subset M$ est une sous-variété non singulière de codimension $r$ et de fibré normal trivial, $B_CM$ est difféomorphe à $M\#C\times \RR\PP^{r-1}$.
\end{prop}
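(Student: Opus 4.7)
Le plan est de traiter d'abord le cas particulier $M=\RR^n$, $C=\{P\}$ par une analyse explicite du fibré tautologique, puis de ramener le cas général à ce cas particulier grâce à la trivialité du fibré normal qui localise l'éclatement sur un voisinage tubulaire produit.

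Pour l'assertion ponctuelle, je commencerais par identifier $B_P\RR^n$ à l'espace total du fibré tautologique au-dessus de $\RR\PP^{n-1}$, grâce à la construction rappelée au début de l'appendice : le diviseur exceptionnel $E_P\approx \RR\PP^{n-1}$ s'identifie à la section nulle. L'étape clé consiste ensuite à reconnaître qu'un voisinage tubulaire fermé de $E_P$ dans $B_P\RR^n$ est difféomorphe à $\RR\PP^n$ privé d'une boule ouverte. En effet, en écrivant $\RR\PP^n=\DD^n/{\sim}$ où l'on identifie les points antipodaux du bord $\sS^{n-1}$, le complémentaire d'une petite boule ouverte autour du centre est exactement le fibré en disques associé au revêtement double antipodal $\sS^{n-1}\to\RR\PP^{n-1}$, qui n'est autre que le tautologique tronqué. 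On décomposera alors
$$
B_P\RR^n=\bigl(\RR^n\setminus\mathring{\DD}^n\bigr)\cup_{\sS^{n-1}}\bigl(\RR\PP^n\setminus\mathring{\DD}^n\bigr),
$$
le recollement s'effectuant naturellement le long de la sphère bord commune, ce qui est par définition la somme connexe annoncée.

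Pour le cas général, j'exploiterais la trivialité du fibré normal $\cN\to C$ pour fixer un voisinage tubulaire de $C$ dans $M$ difféomorphe à $C\times\RR^r$. La naturalité de la construction de l'éclatement via le projectivisé du fibré normal implique que $B_CM$ coïncide au voisinage du diviseur exceptionnel $E_C=C\times\RR\PP^{r-1}$ avec $C\times B_0\RR^r$. En appliquant le cas ponctuel fibre par fibre, on obtient que passer de $M$ à $B_CM$ consiste à retirer $C\times\mathring{\DD}^r$ et à y recoller $C\times(\RR\PP^r\setminus\mathring{\DD}^r)$ le long de $C\times\sS^{r-1}$, ce qui s'interprète comme la somme connexe fibrée le long de $C$ annoncée dans l'énoncé.

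Le principal point technique à soigner sera la vérification que le difféomorphisme de recollement sur la sphère bord commune est isotope à une identification standard, de sorte que l'on obtienne bien la somme connexe usuelle et non une variante tordue. Cela devrait découler sans difficulté de la description explicite des trivialisations locales du fibré tautologique donnée au début de l'appendice ; le passage du cas local au cas global n'introduit alors aucune obstruction supplémentaire grâce à l'hypothèse de trivialité de $\cN$, toutes les constructions étant effectuées de manière $C$-équivariante.
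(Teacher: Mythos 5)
Le texte ne contient en fait aucune démonstration de cette proposition : elle est énoncée sans preuve dans l'appendice (l'« Exemple » qui la suit explicite un autre calcul, celui de l'exemple~\ref{ex.blow}). Il n'y a donc pas de preuve de référence à laquelle comparer votre plan ; celui-ci est l'argument standard et il est correct dans ses grandes lignes. Le point clé est bien l'identification d'un voisinage tubulaire fermé du diviseur exceptionnel avec le fibré en disques du fibré tautologique, puis de ce dernier avec $\RR\PP^n$ privé d'une boule ouverte, et vous avez raison de signaler qu'il reste à vérifier que le recollement le long de $\sS^{n-1}$ est standard (pour $n$ impair, on peut utiliser que $\RR\PP^n$ admet un difféomorphisme renversant l'orientation, induit par $\mathrm{diag}(-1,1,\dots,1)$ sur $\sS^n$).

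Deux points doivent cependant être explicités. D'abord, votre argument établit $B_P\RR^n\approx\RR^n\#\RR\PP^{n}$ et non l'énoncé imprimé $\RR^n\#\RR\PP^{n-1}$, qui est dimensionnellement incohérent (on ne fait pas la somme connexe d'une variété de dimension $n$ avec une variété de dimension $n-1$) ; il s'agit d'une coquille de l'énoncé, confirmée par les formules $B_PX(\RR)=X(\RR)\#\RR\PP^2$ et $B_PM\approx M\#\RR\PP^3$ utilisées ailleurs dans le texte, mais il faut le dire plutôt que de démontrer silencieusement un énoncé aux exposants décalés. Ensuite, dans le cas général votre construction produit la somme connexe \emph{fibrée} le long de $C$ de $M$ avec $C\times\RR\PP^{r}$ : on retire de chacune un voisinage tubulaire ouvert $C\times\DD^r$ et on recolle le long de $C\times\sS^{r-1}$. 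Ce n'est pas la somme connexe ordinaire $M\#(C\times\RR\PP^r)$ dès que $\dim C>0$ — les deux opérations donnent déjà des résultats distincts pour un cercle de fibré normal trivial dans une variété de dimension $3$, où la première est une opération de type chirurgie de Dehn. Votre lecture « somme connexe fibrée » est la bonne interprétation de l'énoncé, mais elle doit être formulée explicitement, en précisant aussi que la description par recollement n'a de sens qu'une fois fixée une trivialisation du fibré normal, même si la variété $B_CM$ obtenue n'en dépend évidemment pas.
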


\begin{ex} À titre d'illustration, nous nous proposons d'expliciter le calcul de l'exemple \ref{ex.blow}.

Notre but étant de déterminer la topologie de la variété éclatée, il est raisonnable de se placer dans la situation $\cC^\infty$ qui permet en particulier de travailler dans un ouvert $U$ aussi "petit" que l'on veut, par exemple un voisinage ouvert de l'unique point réel de la courbe singulière $D$. Nous utiliserons librement l'identification $U\approx \RR^n$. En analytique, une telle identification est impossible ($\CC^n$ n'étant pas biholomorphe à l'un de ses ouverts stricts) et en algébrique c'est encore pire à cause de la faiblesse de  la topologie de Zariski. Voir par exemple~\cite[Chapter~VI \S2.2]{Sc94} pour une façon plus sérieuse d'expliquer ce qu'il se passe ici.

Soit $D$ la courbe d'équations $(z=x^2+y^2=0)$ dans $\CC^3$. Nous allons calculer successivement $\pi_1\colon Y_1=B_D\RR^3\to \RR^3$ et $\pi\colon Y=B_PY_1\to \RR^3$ où $P\in Y_1$ est l'unique point singulier de $Y_1$.

Par définition, $B_D\RR^3\subset\RR^3_{x,y,z}\times\PP^{1}_{\alpha:\beta}$ est déterminé par l'équation
$$
\alpha(x^2+y^2)-\beta z=0
$$
qui possède un unique point singulier $P$ appartenant à la carte affine $\alpha\ne0$.
En restriction à cette carte, $Y_1$ est l'hypersurface affine d'équation $x_1^2+y_1^2-z_1t_1=0$ où $x_1=x,y_1=y,z_1=z,t_1=\beta$  et $P=(0,0,0,0)$.

Pour calculer $B_PY_1$, on éclate $\pi_P\colon \widetilde{\RR^4}\to\RR^4$ au point $P$ et on considère le transformé strict $Y=B_PY_1$ de $Y_1$.

Les quatre équations de  $\pi_P^{-1}(Y_1)\subset \RR^4_{x_1,y_1,z_1,t_1}\times\PP^3_{a:b:c:d}$ étant 
$$
x_1^2+y_1^2-z_1t_1=ay_1-bx_1=bz_1-cy_1=ct_1-dz_1=0.
$$

En restriction à la carte $c\ne0$, $\pi_P^{-1}(Y_1)$ est la variété affine d'équation $z_2^2(x_2^2+y_2^2-t_2)=0$ dans  le sous-espace affine de $\RR^7_{x_2,y_2,z_2,t_2,a,b,d}$ d'équations
$$
\left\{
\begin{array}{c}
x_2=a,\\
y_2=b,\\
t_2=d.
\end{array}
\right.
$$
où $z_2=z_1, x_2=x_1/z_1,y_2=y_1/z_1,t_2=t_1/z_1$. 

La trace du diviseur exceptionnel sur cette carte est déterminée par $z_2=0$, et on en déduit que $Y$ a pour équation $x_2^2+y_2^2-t_2=0$ dans le sous-espace affine
$x_2-a=y_2-b=t_2-d=0$. Le lieu réel est donc la variété produit d'un paraboloïde de révolution avec la droite $\RR$. Dans la carte $d\ne 0$ la situation topologique est identique et il nous reste à nous convaincre que le recollement donne le produit 
$$
\sS^2\times \sS^1.
$$
\end{ex}

 \section{Géométrisation et classification}\label{anex.dim3}
 
Plusieurs articles sont parus dans la Gazette à propos de la Conjecture de Poincaré et de la conjecture de Géométrisation de Thurston \cite{An05}, \cite{Mi04}, \cite{Be05}, \cite{Be13}. En nous basant sur 
l'introduction de \cite{Be-3mfd-10}, nous rappelons très rapidement l'état de l'art depuis que les travaux de Perelman ont ouvert la voie à une classification complète.

\subsubsection*{Classification des variétés $\cC^\infty$ de dimension $3$}

Dans ce paragraphe, une \emph{variété} de dimension $n$ peut posséder un \emph{bord}, noté $\partial M$ caractérisé par le fait que tout point $p\in\partial M$ possède un voisinage dans $M$ qui est localement homéomorphe au produit $\RR^{n-1}\times \RR_{\geq 0}=\{(x_1,\dots,x_n)\in\RR^n\st  x_n\geq 0\}$ avec $p$ identifié à $(0,\dots,0)$. S'il est non vide, le bord d'une variété de dimension $n$ est une variété de dimension $n-1$ dont le bord est vide: $\partial \partial M = \emptyset$.
\emph{L'intérieur} d'une variété à bord est la sous-variété complémentaire du bord $M\setminus \partial M$. Tout point $p\in M\setminus \partial M$ possède un voisinage homéomorphe à $\RR^n$. Une variété \emph{fermée} est une variété compacte telle que $\partial M =\emptyset$.

Une surface fermée connexe $S$ dans une variété $M$ de dimension $3$ orientable et compacte est \emph{essentielle} si son groupe fondamental s'injecte dans $\pi_1(M)$ et si $S$ ne borde pas une $3$-boule ni ne coborde un produit avec une composante connexe de $\partial M$.
%\marginpar{Ajouter Hamilton, Flot de Ricci ?}

On peut énoncer la célèbre conjecture de géométrisation de Thurston (Voir \ref{dfn.geom} pour la définition de \emph{variété géométrique}).
\begin{conj}[Conjecture de géométrisation de Thurston]\label{conj.geom}
L'intérieur d'une variété de dimension $3$  orientable et compacte peut être découpée le long d'une collection finie de $2$-sphères et de $2$-tores plongés essentiels, disjoints deux-à-deux, en une collection canonique de $3$-variétés géométriques après avoir bouché les bords sphériques par des $3$-boules.
\end{conj}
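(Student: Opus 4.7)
Le plan est de procéder en deux étapes topologiques classiques suivies d'une étape analytique centrale due à Perelman. On commencerait par appliquer la décomposition en somme connexe de Kneser--Milnor : toute $3$-variété $M$ orientable et compacte se découpe le long d'une famille finie, canonique à homéomorphisme près, de $2$-sphères plongées essentielles deux-à-deux disjointes ; après bouchage des sphères bordantes par des $3$-boules, on obtient une collection de variétés premières, chaque morceau étant soit $\sS^1\times\sS^2$ (géométrique modelé sur $\sS^2\times\EE^1$), soit irréductible.

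Pour chaque morceau premier irréductible $N$, j'appliquerais ensuite la décomposition de Jaco--Shalen--Johannson : il existe une famille finie de $2$-tores plongés incompressibles et disjoints dans $N$, unique à isotopie près, qui découpe $N$ en morceaux soit Seifert, soit atoroïdaux au sens de JSJ. Les morceaux Seifert sont géométriques d'après la Proposition~\ref{prop.seifert} (étendue aux variétés à bord toroïdal). Il reste à établir que tout morceau atoroïdal à bord toroïdal ou fermé admet soit la géométrie hyperbolique $\HH^3$, soit, dans le cas fermé à groupe fondamental fini, une géométrie sphérique modelée sur $\sS^3$ ; les variétés $\Sol$ émergeant par ailleurs comme recollements de morceaux de type Seifert le long de tores via des difféomorphismes hyperboliques.

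Le c{\oe}ur et la principale difficulté de la preuve résident dans le flot de Ricci avec chirurgie proposé par Hamilton et mené à bien par Perelman. On munit $N$ d'une métrique riemannienne arbitraire et on la fait évoluer selon $\partial_t g=-2\operatorname{Ric}(g)$. Le contrôle des singularités en temps fini s'appuie sur l'estimée de non-effondrement (fonctionnelles $\mathcal{W}$ et $\mathcal{L}$ de Perelman), la classification des solutions anciennes $\kappa$-non effondrées, et le théorème du voisinage canonique ; une chirurgie soigneuse le long de cols quasi-cylindriques permet d'étendre le flot pour tout temps $t\geq 0$. Dans le cas de courbure scalaire positive, on a extinction en temps fini, d'où la géométrie sphérique ; dans le cas général, la décomposition épais--mince en temps grand fournit d'une part des pièces épaisses qui convergent, après renormalisation, vers des variétés hyperboliques complètes de volume fini, et d'autre part des pièces fines qui sont graphées au sens de Waldhausen, donc recollements de morceaux Seifert le long de tores incompressibles. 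On vérifie enfin que les tores délimitant ces deux types de pièces coïncident à isotopie près avec ceux de la décomposition JSJ, ce qui prouve que chaque morceau de la décomposition sphères--tores porte bien l'une des huit géométries de Thurston listées page~\pageref{p.8geom}. L'obstacle principal reste l'analyse fine du flot près de ses singularités --- non-effondrement, admissibilité d'une chirurgie pour tous les temps, extinction en temps fini dans le cas simplement connexe --- qui constitue l'apport technique essentiel et redoutable de Perelman.
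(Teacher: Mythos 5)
Le texte ne démontre pas cet énoncé : il s'agit de la conjecture de géométrisation de Thurston, que l'article se contente d'énoncer en annexe~\ref{anex.dim3} et dont il rappelle immédiatement après qu'elle est devenue un théorème en combinant l'hyperbolisation de Thurston et les travaux de Perelman, en renvoyant à \cite{Be-3mfd-10} pour les démonstrations. Il n'y a donc pas de preuve dans l'article à laquelle comparer la vôtre ; votre esquisse suit néanmoins la stratégie standard et correcte, celle-là même des références citées : décomposition première de Kneser--Milnor le long de sphères, décomposition de Jaco--Shalen--Johannson le long de tores incompressibles, géométrisation des morceaux de Seifert (Proposition~\ref{prop.seifert}), puis flot de Ricci avec chirurgie pour les morceaux restants.

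Soyez toutefois lucide sur ce que votre texte établit réellement : chacune des étapes clefs --- l'hyperbolisation des morceaux atoroïdaux de Haken (théorème de Thurston), le non-effondrement, le théorème du voisinage canonique, l'existence de la chirurgie pour tout temps, l'extinction en temps fini dans le cas simplement connexe, et surtout le théorème d'effondrement nécessaire pour identifier les parties fines à des variétés graphées --- est un théorème majeur que vous invoquez sans le démontrer ; votre paragraphe est un plan de preuve fidèle, non une preuve. Signalons en outre une imprécision classique que votre phrase sur les variétés $\Sol$ effleure sans la régler : la décomposition géométrique ne coïncide pas exactement avec la décomposition JSJ. Par exemple, une suspension d'un difféomorphisme hyperbolique du tore est un seul morceau géométrique (modelé sur $\Sol$) alors que sa décomposition JSJ la découpe le long d'un tore en $(\sS^1\times\sS^1)\times[0,1]$, et certains petits morceaux de Seifert doivent être fusionnés ou retraités pour obtenir la collection canonique de morceaux géométriques annoncée dans l'énoncé.
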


Chaque composante connexe du complémentaire de la famille de tores et sphères admet une métrique localement homogène de volume fini. Soit $M$ une telle composante connexe, et $\widehat{M}$ la variété compacte obtenue en "bouchant" les trous. Alors $M$ possède une structure géométrique modelée sur l'une des huit géométries de la page \pageref{p.8geom}.

Cette conjecture contient comme cas particulier la non moins célèbre conjecture de Poincaré.
\begin{conj}[Conjecture de Poincaré]
Soit $M$ une variété de dimension $3$ simplement connexe et fermée alors $M$ est difféomorphe à la 
$3$-sphère.
\end{conj}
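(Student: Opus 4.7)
Le plan est de déduire cette conjecture (aujourd'hui un théorème grâce à Perelman) de la conjecture de géométrisation~\ref{conj.geom}, que je supposerais donc acquise. Soit $M$ une variété de dimension~$3$ fermée et simplement connexe; remarquons d'emblée que $M$ est automatiquement orientable puisque $H_1(M;\ZZ/2)=0$.

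La première étape consisterait à se ramener au cas où $M$ est \emph{première} au sens de Kneser-Milnor, c'est-à-dire ne contient pas de $2$-sphère plongée essentielle. D'après le théorème de van Kampen, toute décomposition en somme connexe $M=M_1\#M_2$ induit une décomposition $\pi_1(M)=\pi_1(M_1)*\pi_1(M_2)$ en produit libre; la simple connexité de $M$ entraîne donc celle de chaque facteur, et on peut itérer la décomposition de Kneser-Milnor jusqu'à obtenir des facteurs premiers, encore simplement connexes et fermés. Il suffirait donc de traiter le cas où $M$ est première.

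J'appliquerais alors la conjecture~\ref{conj.geom} à $M$: seule la décomposition le long de tores essentiels reste à discuter, puisque $M$ est première. Or un tore essentiel $T\subset M$ induirait par définition une injection $\ZZ^2=\pi_1(T)\hookrightarrow\pi_1(M)=\{1\}$, ce qui est absurde. Ainsi $M$ est elle-même géométrique, et admet une structure modelée sur l'une des huit géométries recensées en page~\pageref{p.8geom}. En notant $\Omega$ la géométrie sous-jacente, on a $M=\raisebox{-.65ex}{\ensuremath{\Lambda}}\!\backslash\Omega$, et la simple connexité de $M$ impose $\Lambda=\{1\}$, donc $M\approx\Omega$; comme $M$ est compacte, $\Omega$ doit l'être aussi. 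Parmi les huit géométries de Thurston, seule $\sS^3$ est compacte, d'où $M\approx\sS^3$.

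L'obstacle principal est évidemment la conjecture~\ref{conj.geom} elle-même, dont la démonstration par Perelman via le flot de Ricci avec chirurgie est hors de portée ici. Une fois celle-ci admise, l'argument ci-dessus est essentiellement formel; la seule subtilité technique consiste à vérifier soigneusement que la simple connexité de $M$ interdit à la fois les sphères essentielles (via Kneser-Milnor) et les tores essentiels (via van Kampen), puis à observer que parmi les huit modèles de Thurston seule la géométrie $\sS^3$ admet un revêtement universel compact.
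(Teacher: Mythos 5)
Votre démonstration est correcte et suit exactement la voie qu'indique l'article, lequel ne donne d'ailleurs aucune preuve : il se borne à signaler que la conjecture de Poincaré est un cas particulier de la conjecture de géométrisation~\ref{conj.geom}, établie par Thurston et Perelman. Votre réduction au cas premier par Kneser--Milnor, l'exclusion des tores essentiels par simple connexité, puis l'observation que $\sS^3$ est la seule des huit géométries qui soit compacte, constituent précisément la déduction standard que le texte laisse implicite.
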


En réunissant le théorème d'hyperbolisation de Thurston \cite[1.1.5]{Be-3mfd-10} et le théorème de Perelman \cite[1.1.6]{Be-3mfd-10} on obtient la classification pour les variétés orientables.

\begin{thm}[Théorème de géométrisation]
La conjecture de géométrisation \ref{conj.geom} est vraie pour toute variété compacte orientable de dimension~$3$.
\end{thm}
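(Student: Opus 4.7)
La stratégie naturelle consiste à réduire l'énoncé en une succession de décompositions topologiques pour se ramener à géométriser des pièces élémentaires, puis à invoquer les deux grands théorèmes d'hyperbolisation évoqués juste avant l'énoncé. Plus précisément, le plan comporte trois étapes: découper $M$ le long de $2$-sphères essentielles pour se ramener à une $3$-variété prime, puis la découper le long de $2$-tores essentiels pour obtenir des pièces soit Seifert soit atoroïdales, et enfin géométriser chaque pièce.

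Première étape, la décomposition en facteurs premiers de Kneser--Milnor: toute $3$-variété orientable compacte $M$ s'écrit, de manière essentiellement unique, comme somme connexe $M = M_1 \# \cdots \# M_r$ où chaque $M_i$ est prime. La famille de $2$-sphères essentielles annoncée dans la conjecture~\ref{conj.geom} est exactement celle qui réalise cette décomposition, et boucher les composantes sphériques du bord par des $3$-boules revient à travailler avec les $\widehat{M_i}$. On peut donc supposer $M$ prime.

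Deuxième étape, la décomposition JSJ de Jaco--Shalen et Johannson: sur une $3$-variété prime orientable compacte, il existe une famille finie canonique (à isotopie près) de $2$-tores incompressibles, disjoints et deux à deux non parallèles, qui découpe l'intérieur en morceaux dont chacun est soit \emph{Seifert}, soit \emph{atoroïdal} (c'est-à-dire sans tore incompressible non parallèle au bord). Les morceaux Seifert portent alors, d'après la Proposition~\ref{prop.seifert} et son extension au cas à bord, l'une des six géométries de Seifert. Pour les morceaux atoroïdaux, on distingue deux sous-cas: les fibrés en tores sur $\sS^1$ provenant d'un difféomorphisme hyperbolique, qui portent la géométrie $\Sol$; et les autres, pour lesquels on veut établir l'existence d'une métrique hyperbolique complète de volume fini.

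Cette dernière affirmation — l'hyperbolisation des pièces atoroïdales — concentre toute la difficulté et c'est là que le plan se ramène intégralement à des résultats très profonds admis en bloc: le théorème d'hyperbolisation de Thurston pour les variétés Haken (\cite[1.1.5]{Be-3mfd-10}), puis le théorème de Perelman (\cite[1.1.6]{Be-3mfd-10}) qui, via le flot de Ricci avec chirurgie, traite les cas non-Haken, en particulier la conjecture de Poincaré et la conjecture d'elliptisation. L'obstacle principal est évidemment l'analyse fine du flot de Ricci: contrôle analytique des singularités en temps fini, définition et mise en œuvre géométriquement raisonnable des chirurgies, preuve de l'extinction en temps fini dans le cas simplement connexe, et description asymptotique à l'infini pour extraire les pièces hyperboliques de volume fini. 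Mon "plan" au-delà des étapes topologiques (Kneser--Milnor et JSJ, aujourd'hui considérées comme classiques) n'est donc rien d'autre que la reconnaissance que les deux références citées font tout le travail analytique.
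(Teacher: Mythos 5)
Votre plan suit exactement la même route que le texte, qui se contente de « réunir » le théorème d'hyperbolisation de Thurston \cite[1.1.5]{Be-3mfd-10} et le théorème de Perelman \cite[1.1.6]{Be-3mfd-10} ; vous explicitez simplement les réductions topologiques classiques (Kneser--Milnor puis JSJ) que ces références présupposent. C'est correct et conforme à l'esprit de l'énoncé, tout le contenu non trivial étant, ici comme dans l'article, délégué aux deux théorèmes cités.
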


\backmatter

%%%%%%%%%%%%%%%%%%%%%%%
\bibliographystyle{smfalpha}
\bibliography{biblio-perso,biblio-tvar,biblio-preprints}

\newcommand{\etalchar}[1]{$^{#1}$}
\providecommand{\bysame}{\leavevmode ---\ }
\providecommand{\og}{``}
\providecommand{\fg}{''}
\providecommand{\smfandname}{\&}
\providecommand{\smfedsname}{\'eds.}
\providecommand{\smfedname}{\'ed.}
\providecommand{\smfmastersthesisname}{M\'emoire}
\providecommand{\smfphdthesisname}{Th\`ese}
\begin{thebibliography}{BHPVdV04}

\bibitem[AK81]{AK81}
{\scshape S.~Akbulut {\normalfont \smfandname} H.~C. King} -- {\og Real
  algebraic structures on topological spaces\fg}, \emph{Inst. Hautes \'Etudes
  Sci. Publ. Math.} (1981), no.~53, p.~79--162.

\bibitem[AK85]{AK85}
{\scshape S.~Akbulut {\normalfont \smfandname} H.~King} -- {\og Submanifolds
  and homology of nonsingular real algebraic varieties\fg}, \emph{Amer. J.
  Math.} \textbf{107} (1985), no.~1, p.~45--83.

\bibitem[AK91]{AK91}
{\scshape S.~Akbulut {\normalfont \smfandname} H.~King} -- {\og Rational
  structures on {$3$}-manifolds\fg}, \emph{Pacific J. Math.} \textbf{150}
  (1991), no.~2, p.~201--214.

\bibitem[And05]{An05}
{\scshape M.~T. Anderson} -- {\og G\'eom\'etrisation des vari\'et\'es de
  dimension 3 via le flot de {R}icci\fg}, \emph{Gaz. Math.} (2005), no.~103,
  p.~24--40, Translated from Notices Amer. Math. Soc. {{\bf{5}}1} (2004), no.
  2, 184--193 [MR2026939] by Zindine Djadli.

\bibitem[BBM{\etalchar{+}}10]{Be-3mfd-10}
{\scshape L.~Bessi{\`e}res, G.~Besson, S.~Maillot, M.~Boileau {\normalfont
  \smfandname} J.~Porti} -- \emph{Geometrisation of 3-manifolds}, EMS Tracts in
  Mathematics, vol.~13, European Mathematical Society (EMS), Z\"urich, 2010.

\bibitem[BCR98]{BCR}
{\scshape J.~Bochnak, M.~Coste {\normalfont \smfandname} M.-F. Roy} --
  \emph{Real algebraic geometry}, Ergebnisse der Mathematik und ihrer
  Grenzgebiete (3), vol.~36, Springer-Verlag, Berlin, 1998, Translated from the
  1987 French original.

\bibitem[Bes05]{Be05}
{\scshape L.~Bessi{\`e}res} -- {\og Conjecture de {P}oincar\'e: la preuve de
  {R}. {H}amilton et {G}. {P}erelman\fg}, \emph{Gaz. Math.} (2005), no.~106,
  p.~7--35.

\bibitem[Bes13]{Be13}
{\scshape G.~Besson} -- {\og La conjecture de {P}oincar\'e\fg}, \emph{Gaz.
  Math.} (2013), no.~135, p.~5--16.

\bibitem[BHPVdV04]{BPV}
{\scshape W.~P. Barth, K.~Hulek, C.~A.~M. Peters {\normalfont \smfandname}
  A.~Van~de Ven} -- \emph{Compact complex surfaces}, second \smfedname,
  Ergebnisse der Mathematik und ihrer Grenzgebiete. 3. Folge., vol.~4,
  Springer-Verlag, Berlin, 2004.

\bibitem[BM92]{BM92}
{\scshape R.~Benedetti {\normalfont \smfandname} A.~Marin} -- {\og D\'echirures
  de vari\'et\'es de dimension trois et la conjecture de {N}ash de
  rationalit\'e en dimension trois\fg}, \emph{Comment. Math. Helv.} \textbf{67}
  (1992), no.~4, p.~514--545.

\bibitem[BMP03]{BMP03}
{\scshape M.~Boileau, S.~Maillot {\normalfont \smfandname} J.~Porti} --
  \emph{Three-dimensional orbifolds and their geometric structures}, Panoramas
  et Synth\`eses [Panoramas and Syntheses], vol.~15, Soci\'et\'e Math\'ematique
  de France, Paris, 2003.

\bibitem[Cam92]{Cam92}
{\scshape F.~Campana} -- {\og Connexit\'e rationnelle des vari\'et\'es de
  {F}ano\fg}, \emph{Ann. Sci. \'Ecole Norm. Sup. (4)} \textbf{25} (1992),
  no.~5, p.~539--545.

\bibitem[CM08]{cm1}
{\scshape F.~Catanese {\normalfont \smfandname} F.~Mangolte} -- {\og Real
  singular del {P}ezzo surfaces and 3-folds fibred by rational curves. {I}\fg},
  \emph{Michigan Math. J.} \textbf{56} (2008), no.~2, p.~357--373.

\bibitem[CM09]{cm2}
\bysame , {\og Real singular del {P}ezzo surfaces and 3-folds fibred by
  rational curves. {II}\fg}, \emph{Ann. Sci. \'Ec. Norm. Sup\'er. (4)}
  \textbf{42} (2009), no.~4, p.~531--557.

\bibitem[Com14]{Co14}
{\scshape A.~Comessatti} -- {\og Sulla connessione delle superfizie razionali
  reali\fg}, \emph{Annali di Math.} \textbf{23} (1914), no.~3, p.~215--283.

\bibitem[DIK00]{DIK}
{\scshape A.~Degtyarev, I.~Itenberg {\normalfont \smfandname} V.~Kharlamov} --
  \emph{Real {E}nriques surfaces}, Lecture Notes in Mathematics, vol. 1746,
  Springer-Verlag, Berlin, 2000.

\bibitem[EGH00]{EGH}
{\scshape Y.~Eliashberg, A.~Givental {\normalfont \smfandname} H.~Hofer} --
  {\og Introduction to symplectic field theory\fg}, \emph{Geom. Funct. Anal.}
  (2000), no.~Special Volume, Part II, p.~560--673, GAFA 2000 (Tel Aviv, 1999).

\bibitem[FM94]{FM94}
{\scshape R.~Friedman {\normalfont \smfandname} J.~W. Morgan} -- \emph{Smooth
  four-manifolds and complex surfaces}, Ergebnisse der Mathematik und ihrer
  Grenzgebiete (3) [Results in Mathematics and Related Areas (3)], vol.~27,
  Springer-Verlag, Berlin, 1994.

\bibitem[Har77]{Ha77}
{\scshape R.~Hartshorne} -- \emph{Algebraic geometry}, Springer-Verlag, New
  York, 1977, Graduate Texts in Mathematics, No. 52.

\bibitem[Hem76]{He76}
{\scshape J.~Hempel} -- \emph{{$3$}-{M}anifolds}, Princeton University Press,
  Princeton, N. J., 1976, Ann. of Math. Studies, No. 86.

\bibitem[HM05a]{hm2}
{\scshape J.~Huisman {\normalfont \smfandname} F.~Mangolte} -- {\og Every
  connected sum of lens spaces is a real component of a uniruled algebraic
  variety\fg}, \emph{Ann. Inst. Fourier (Grenoble)} \textbf{55} (2005), no.~7,
  p.~2475--2487.

\bibitem[HM05b]{hm1}
\bysame , {\og Every orientable {S}eifert 3-manifold is a real component of a
  uniruled algebraic variety\fg}, \emph{Topology} \textbf{44} (2005), no.~1,
  p.~63--71.

\bibitem[Hui11]{Hu11}
{\scshape J.~Huisman} -- {\og Topology of real algebraic varieties; some recent
  results on rational surfaces\fg}, in \emph{Real Algebraic Geometry, Rennes :
  France (2011)}, Preprint, hal-00609687, 2011, p.~51--62.

\bibitem[Kaw92]{Kawa92}
{\scshape Y.~Kawamata} -- {\og Boundedness of {$\mathbf{Q}$}-{F}ano
  threefolds\fg}, in \emph{Proceedings of the {I}nternational {C}onference on
  {A}lgebra, {P}art 3 ({N}ovosibirsk, 1989)} (Providence, RI), Contemp. Math.,
  vol. 131, Amer. Math. Soc., 1992, p.~439--445.

\bibitem[Kha02]{KhBourbaki}
{\scshape V.~Kharlamov} -- {\og Vari\'et\'es de {F}ano r\'eelles (d'apr\`es
  {C}. {V}iterbo)\fg}, \emph{Ast\'erisque} (2002), no.~276, p.~189--206,
  S{\'e}minaire Bourbaki, Vol. 1999/2000.

\bibitem[KMM92]{KMM92}
{\scshape J.~Koll{\'a}r, Y.~Miyaoka {\normalfont \smfandname} S.~Mori} -- {\og
  Rational connectedness and boundedness of {F}ano manifolds\fg}, \emph{J.
  Differential Geom.} \textbf{36} (1992), no.~3, p.~765--779.

\bibitem[Kol98a]{Ko98}
{\scshape J.~Koll{\'a}r} -- {\og The {N}ash conjecture for threefolds\fg},
  \emph{Electron. Res. Announc. Amer. Math. Soc.} \textbf{4} (1998), p.~63--73
  (electronic).

\bibitem[Kol98b]{KoI}
\bysame , {\og Real algebraic threefolds. {I}. {T}erminal singularities\fg},
  \emph{Collect. Math.} \textbf{49} (1998), no.~2-3, p.~335--360, Dedicated to
  the memory of Fernando Serrano.

\bibitem[Kol99a]{KoII}
\bysame , {\og Real algebraic threefolds. {II}. {M}inimal model program\fg},
  \emph{J. Amer. Math. Soc.} \textbf{12} (1999), no.~1, p.~33--83.

\bibitem[Kol99b]{KoIII}
\bysame , {\og Real algebraic threefolds. {III}. {C}onic bundles\fg}, \emph{J.
  Math. Sci. (New York)} \textbf{94} (1999), no.~1, p.~996--1020, Algebraic
  geometry, 9.

\bibitem[Kol00]{KoIV}
\bysame , {\og Real algebraic threefolds. {IV}. {D}el {P}ezzo fibrations\fg},
  in \emph{Complex analysis and algebraic geometry}, de Gruyter, Berlin, 2000,
  p.~317--346.

\bibitem[Kol01a]{Ko01real}
\bysame , {\og The topology of real algebraic varieties\fg}, in \emph{Current
  developments in mathematics, 2000}, Int. Press, Somerville, MA, 2001,
  p.~197--231.

\bibitem[Kol01b]{Ko01}
\bysame , {\og The topology of real and complex algebraic varieties\fg}, in
  \emph{Taniguchi {C}onference on {M}athematics {N}ara '98}, Adv. Stud. Pure
  Math., vol.~31, Math. Soc. Japan, Tokyo, 2001, p.~127--145.

\bibitem[Kol01c]{Ko-simplest}
\bysame , {\og Which are the simplest algebraic varieties?\fg}, \emph{Bull.
  Amer. Math. Soc. (N.S.)} \textbf{38} (2001), no.~4, p.~409--433 (electronic).

\bibitem[Kol02]{Ko-nonproj}
\bysame , {\og The {N}ash conjecture for nonprojective threefolds\fg}, in
  \emph{Symposium in {H}onor of {C}. {H}. {C}lemens ({S}alt {L}ake {C}ity,
  {UT}, 2000)}, Contemp. Math., vol. 312, Amer. Math. Soc., Providence, RI,
  2002, p.~137--152.

\bibitem[Kra06]{Kras06}
{\scshape V.~A. Krasnov} -- {\og Rigid isotopy classification of real
  three-dimensional cubics\fg}, \emph{Izv. Ross. Akad. Nauk Ser. Mat.}
  \textbf{70} (2006), no.~4, p.~91--134.

\bibitem[Kra09]{Kras09}
\bysame , {\og On the topological classification of real three-dimensional
  cubics\fg}, \emph{Mat. Zametki} \textbf{85} (2009), no.~6, p.~886--893.

\bibitem[KS04]{KS04}
{\scshape J.~Koll{\'a}r {\normalfont \smfandname} F.-O. Schreyer} -- {\og Real
  {F}ano 3-folds of type {$V_{22}$}\fg}, in \emph{The {F}ano {C}onference},
  Univ. Torino, Turin, 2004, p.~515--531.

\bibitem[Mik97]{Mi97}
{\scshape G.~Mikhalkin} -- {\og Blowup equivalence of smooth closed
  manifolds\fg}, \emph{Topology} \textbf{36} (1997), no.~1, p.~287--299.

\bibitem[Mil62]{Mi62}
{\scshape J.~Milnor} -- {\og A unique decomposition theorem for
  {$3$}-manifolds\fg}, \emph{Amer. J. Math.} \textbf{84} (1962), p.~1--7.

\bibitem[Mil04]{Mi04}
{\scshape J.~Milnor} -- {\og Vers la conjecture de {P}oincar\'e et la
  classification des vari\'et\'es de dimension 3\fg}, \emph{Gaz. Math.} (2004),
  no.~99, p.~13--25, Translated from Notices Amer. Math. Soc. {\bf{50}} (2003),
  no. 10, 1226--1233.

\bibitem[MW12]{mw1}
{\scshape F.~Mangolte {\normalfont \smfandname} J.-Y. Welschinger} -- {\og Do
  uniruled six-manifolds contain {S}ol lagrangian submanifolds?\fg},
  \emph{IMRN} \textbf{2012} (2012), no.~7, p.~1569--1602.

\bibitem[Nas52]{Nash52}
{\scshape J.~Nash} -- {\og Real algebraic manifolds\fg}, \emph{Ann. of Math.
  (2)} \textbf{56} (1952), p.~405--421.

\bibitem[RS82]{RS82}
{\scshape C.~P. Rourke {\normalfont \smfandname} B.~J. Sanderson} --
  \emph{Introduction to piecewise-linear topology}, Springer Study Edition,
  Springer-Verlag, Berlin, 1982, Reprint.

\bibitem[Sco83]{Sc83}
{\scshape P.~Scott} -- {\og The geometries of {$3$}-manifolds\fg}, \emph{Bull.
  London Math. Soc.} \textbf{15} (1983), no.~5, p.~401--487.

\bibitem[Sha94]{Sc94}
{\scshape I.~R. Shafarevich} -- \emph{Basic algebraic geometry}, second
  \smfedname, Springer-Verlag, Berlin, 1994, Two volumes. Translated from the
  1988 Russian edition and with notes by Miles Reid.

\bibitem[Sil89]{Si89}
{\scshape R.~Silhol} -- \emph{Real algebraic surfaces}, Lecture Notes in
  Mathematics, vol. 1392, Springer-Verlag, Berlin, 1989.

\bibitem[Tog73]{To73}
{\scshape A.~Tognoli} -- {\og Su una congettura di {N}ash\fg}, \emph{Ann.
  Scuola Norm. Sup. Pisa (3)} \textbf{27} (1973), p.~167--185.

\bibitem[Tro98]{Tr98}
{\scshape M.~Troyanov} -- {\og L'horizon de {${\rm SOL}$}\fg},
  \emph{Exposition. Math.} \textbf{16} (1998), no.~5, p.~441--479.

\bibitem[Vit99]{V99}
{\scshape C.~Viterbo} -- {\og Symplectic real algebraic geometry\fg},
  Unpublished, 1999.

\end{thebibliography}
%%%%%%%%%%%%%%%%%%%%%%%
\end{document}